\documentclass[10pt]{amsart}
\usepackage{amsmath}
\usepackage{amssymb, amsfonts, amsmath, amsthm, amscd}
\usepackage{mathrsfs}
\usepackage{color, graphics}
\usepackage[all]{xy}
\usepackage{hyperref}
\usepackage{amsxtra}

\parindent 0,5 cm
\topmargin -0.3cm 
\oddsidemargin 0.25cm 
\evensidemargin 0.25cm
\textwidth 15,2cm 
\textheight 22,2cm


\newtheorem{theorem}{Theorem}[section]
\newtheorem{proposition}[theorem]{Proposition}
\newtheorem{lemma}[theorem]{Lemma}
\newtheorem{claim}[theorem]{Claim}
\newtheorem{corollary}[theorem]{Corollary}

\newtheorem{remark}[theorem]{Remark}

\theoremstyle{remark}

\numberwithin{equation}{section}

\def\deg{\operatorname{deg}}%
\def\dim{\operatorname{dim}}%
\def\max{\operatorname{max}}%
\def\ker{\operatorname{ker}}%

\def\pic{\hbox{\rm Pic}}


\def\deg{\operatorname{deg}}%
\def\dim{\operatorname{dim}}%
\def\max{\operatorname{max}}%
\def\ker{\operatorname{ker}}%

\newcommand \im   {\ensuremath{\mathrm{im}}}

\newcommand \ext {\ensuremath{\mathrm{Ext}}}
\newcommand \Sec {\ensuremath{\mathrm{Sec}}}

\newcommand \coker {\ensuremath{\mathrm{coker}}}

\def\deg{\mbox{deg}}

\def\Pp{\mathbb P}

\def\Ff{\mathcal F}
\def\N{\mathcal N}
\def\T{\mathcal T}
\def\OO{\mathcal O}
\def\Oc{\mathcal O}

\begin{document}

\title[Moduli spaces of bundles and Hilbert schemes of scrolls
over $\nu$-gonal curves]{Moduli spaces of bundles and Hilbert schemes of scrolls
over $\nu$-gonal curves}
\author{Youngook Choi}
\address{Department of Mathematics Education, Yeungnam University, 280 Daehak-Ro, Gyeongsan, Gyeongbuk 38541,
 Republic of Korea }
\email{ychoi824@yu.ac.kr}
\author{Flaminio Flamini}
\address{Universita' degli Studi di Roma Tor Vergata, 
Dipartimento di Matematica, Via della Ricerca Scientifica-00133 Roma, Italy}
\email{flamini@mat.uniroma2.it}
\author{Seonja Kim}
\address{Department of  Electronic Engineering,
Chungwoon University, Sukgol-ro, Nam-gu, Incheon, 22100, Republic of Korea}
\email{sjkim@chungwoon.ac.kr}
\thanks{The first author was supported by Basic Science Research Program through the National Research Foundation of Korea (NRF-2016R1D1A3B03933342) and by Italian PRIN $2015EYPTSB-011$-{\em Geomety of Algebraic varieties} (Node Tor Vergata).  The third  author was supported by Basic Science Research Program through the National Research Foundation of Korea (NRF-2016R1D1A1B03930844) and by Italian PRIN $2015EYPTSB-011$-{\em Geomety of Algebraic varieties} (Node Tor Vergata). For their collaboration, the three authors have been supported by funds {\em Mission Sustainability 2017 - Fam Curves}. CUP E81|18000100005 (Tor Vergata University)}

\subjclass[2010]{14H60, 14D20, 14J26}

\keywords{stable rank 2 bundles, Brill-Noether loci, general $\nu$-gonal curves, Hilbert schemes}
\begin{abstract} The aim of this paper is two--fold. We first strongly improve our previous main result \cite[Theorem\;3.1]{CFK}, concerning classification of irreducible components of the Brill--Noether locus 
parametrizing rank 2 semistable vector bundles of suitable degrees $d$, with at least $d-2g+4$ independent global sections, on a general $\nu$--gonal curve $C$ of genus $g$. 
We then uses this classification to study several properties of the Hilbert scheme of suitable surface scrolls in projective space, which turn out to be special and stable.  
\end{abstract}

\maketitle



\section{Introduction}
Let $C$ denote a smooth, irreducible, complex projective curve of genus $g \geq 3$. Let $U_C(2, d)$ be the moduli space of semistable, degree $d$, rank $2$ vector bundles on $C$ and let $U^s_C(2,d)$ be the open dense  subset of stable bundles (when $d$ is odd, more precisely one has $U_C(2, d)=U^s_C(2,d)$). Let $B^k_{2,d} \subseteq U_C(2, d)$ be the {\em Brill-Noether locus} which consists of vector bundles 
$\mathcal F$  having $h^0(\mathcal F)\ge k$, for a positive integer $k$. 

Traditionally, one denotes by $W^k_d$ the Brill-Noether locus  $B^{k+1}_{1,d}$ of line bundles
$L\in \mbox{Pic}^d(C)$ having $h^0(L)\ge k+1$, for a non-negative integer $k$. In what follows, we sometimes identify line 
bundles with corresponding divisor classes, interchangeably using multiplicative and additive notation.

For the case of rank $2$ vector bundles, we simply put  $B^k_d:=B^k_{2,d}$, 
for which it is well-known that the {\em expected dimension} of $B_d^k \cap U^s_C(2,d)$ is $\rho_d^{k}:=4g-3-ik$, where $i:=k+2g-2-d$ (cf.\;\cite{Sun}). 
Recall that, as customary, an irreducible component of $B_d^k$ is said to be {\em regular}, if it is reduced with expected dimension, and {\em superabundant}, otherwise.

In the range $0 \le d \le 2g-2$, $B^1_d$ has been deeply studied on any curve $C$ by several authors (cf.\;e.g.\;\cite{Sun,L}).
Concerning $B^2_d$,  using a degeneration argument, N. Sundaram \cite{Sun} proved  that $B^2_d$ is non-empty for any  $C$ and for odd $d$ such that $g\le d\le 2g-3$. 
M. Teixidor I Bigas generalizes Sundaram's result as follows: 

\begin{theorem}[\cite{Teixidor1}]\label{Teixidor} Given a non-singular curve $C$ of genus $g$ and an integer $d$, where \linebreak $3\le d\le 2g-1$, then 
$B^2_d \cap U^s_C(2,d)$ has a component $\mathcal B$ of (expected)  dimension $\rho^2_d=2d-3$ and a general point on it corresponds to a vector bundle whose space of sections has dimension $2$. 
If $C$ is general (i.e. $C$ is a curve {\em with general moduli}), this is the only component of $B^2_d\cap U^s_C(2,d)$. Moreover, $B^2_d\cap U^s_C(2,d)$ has extra components if and only 
if $W^1_n$ is non-empty, with $\dim  W^1_n\ge d+2n-2g-1$, for some integer $n$ such that $2n<d$.
\end{theorem}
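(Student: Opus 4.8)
The plan is to prove the three assertions — existence of the distinguished component $\mathcal B$, its uniqueness on a general curve, and the numerical criterion for extra components — by a single dimension-theoretic analysis organized around the subsheaf generated by two general sections. Throughout I set $i=2g-d$, so that $\rho^2_d=4g-3-2i=2d-3$.

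First I would construct $\mathcal B$ by realizing its general member as a bundle generated by its sections. Given $\mathcal F$ with two sections $s_0,s_1$ whose wedge $s_0\wedge s_1\in H^0(\det\mathcal F)$ is nonzero, the evaluation $\mathcal O_C^{\,2}\to\mathcal F$ is generically an isomorphism, so its image has torsion cokernel; dually, $\mathcal F=\mathcal E^\vee$ where $\mathcal E=\ker(\mathcal O_C^{\,2}\twoheadrightarrow\tau)$ for a torsion sheaf $\tau$ of length $d$. Hence all such bundles are parametrized by the Quot scheme $\mathrm{Quot}^d(\mathcal O_C^{\,2})$ of length-$d$ torsion quotients, which is smooth and irreducible of dimension $2d$. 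The resulting classifying map to $U_C(2,d)$ has, over a stable bundle with exactly two sections, generic fibre equal to the generating pairs of sections modulo automorphisms, of dimension $\dim GL_2-\dim\mathrm{Aut}(\mathcal F)=4-1=3$; thus the image $\mathcal B$ has dimension $2d-3=\rho^2_d$. It then remains to verify that a general member of $\mathcal B$ is stable and satisfies $h^0=2$: stability I would obtain by bounding the degree of its maximal line subbundle strictly below $d/2$, and the equality $h^0=2$ by semicontinuity after exhibiting a single such bundle.

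For the classification, let $\mathcal A$ be any component with general member $\mathcal F$, which I may assume has $h^0=2$ (the minimal value). Looking at the evaluation $\varphi=(s_0,s_1)\colon\mathcal O_C^{\,2}\to\mathcal F$ of two independent sections gives a dichotomy: either $\varphi$ has generic rank $2$, whence $\mathcal F\in\mathcal B$ by the construction above, or $\varphi$ has generic rank $1$, in which case the saturation of its image is a line subbundle $L\subseteq\mathcal F$ through which both sections factor, so that $h^0(L)\ge 2$, i.e. $L\in W^1_n$ with $n=\deg L$, and stability forces $2n<d$. In the second case $\mathcal F$ sits in an extension $0\to L\to\mathcal F\to M\to 0$ with $\deg M=d-n$. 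To size this locus I count its parameters: $L\in W^1_n$, $M\in\mathrm{Pic}^{d-n}(C)$, and a class in $\mathbb P\,\mathrm{Ext}^1(M,L)=\mathbb P\,H^1(L\otimes M^{-1})$. Since $\deg(L\otimes M^{-1})=2n-d<0$, Riemann–Roch gives $h^1(L\otimes M^{-1})=g-1+d-2n$, so the space of extensions has projective dimension $g+d-2n-2$; as a general extension recovers $L$ and $M$ uniquely, the locus has dimension $\dim W^1_n+g+(g+d-2n-2)$. This is at least $\rho^2_d=2d-3$, producing a component not contained in $\mathcal B$, exactly when $\dim W^1_n\ge d+2n-2g-1$, which is the stated criterion.

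Finally, on a curve with general moduli one has $\dim W^1_n=\rho(g,1,n)=2n-g-2$ when this is nonnegative and $W^1_n=\emptyset$ otherwise; substituting into the criterion and using $W^1_n\neq\emptyset$ together with $2n<d\le 2g-1$ shows the inequality never holds, so $\mathcal B$ is the unique component, giving the uniqueness statement. The main obstacle I anticipate lies not in the dimension arithmetic but in the genericity inputs that make the counts sharp: proving that a general member of $\mathcal B$ is stable with $h^0$ exactly $2$, so that the fibre computation is valid and $\mathcal B$ has the expected dimension, and showing in the extension case that a general extension is stable and that $L$ is recovered uniquely, so that the displayed dimension is the true dimension rather than a mere upper bound.
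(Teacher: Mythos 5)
First, a point of orientation: the paper gives no proof of this statement at all --- it is imported as a black box from \cite{Teixidor1} --- so there is no in-house argument to measure yours against. That said, your strategy is essentially the one of the cited source and parallels the paper's own treatment of the residual locus $B^{k_2}_d$ in Section \ref{S:proof}: split according to whether the sections of a general member of a component generate a subsheaf of generic rank $2$ or rank $1$, realize the first case by an irreducible parameter space mapping onto a locus of dimension exactly $\rho^2_d=2d-3$ (your $\mathrm{Quot}^d(\mathcal O_C^{\,2})$ of torsion quotients plays the role of the families of extension spaces used in \S\;\ref{ss:regular}), and convert the second case into the numerical condition on $W^1_n$ via the count $\dim W^1_n+g+(g+d-2n-2)\ge 2d-3$. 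All of your dimension computations are correct, and the verification that the criterion is vacuous on a curve with general moduli (substituting $\dim W^1_n=2n-g-2$ forces $g+2\le 2n<d\le g-1$, which is impossible) is right.

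There are, however, two genuine gaps beyond the verifications you explicitly defer. The first is the sentence ``which I may assume has $h^0=2$ (the minimal value)'': a priori a component of $B^2_d$ could have general member with $h^0\ge 3$, i.e.\ could coincide with a component of $B^3_d$, and excluding this is precisely what the present paper invokes \cite[Lemme 2.6]{L} for in its Lemma \ref{speciality3} (elementary transformations showing no component of $B^{k_2}_d$ sits inside $B^{k_3}_d$). You can sidestep Laumon by running your dichotomy on the full evaluation map $H^0(\mathcal F)\otimes\mathcal O_C\to\mathcal F$ rather than on two chosen sections --- if it has generic rank $1$, the saturation $L$ of its image satisfies $h^0(L)\ge h^0(\mathcal F)\ge 2$ and the same extension count applies --- but as written the reduction is unjustified. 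The second gap is that you never show $\mathcal B$ survives as a component when extra components exist: the closure of the image of your Quot scheme has dimension exactly $\rho^2_d$, so you must rule out its being contained in the closure of some rank-$1$ locus of strictly larger dimension. This does follow (a limit of bundles carrying a sub-line bundle in $W^1_n$ carries a sub-line bundle of degree $\ge n$ with at least $2$ sections, incompatible with $h^0=2$ together with generic global generation), but it has to be said. Finally, note that the deferred genericity checks are exactly where the hypothesis $d\ge 3$ must enter --- for $d\le 2$ one has $B^2_d\cap U^s_C(2,d)=\emptyset$ (cf.\ Remark \ref{rem:TeixRes}) --- so the promised stability bound on the maximal sub-line bundle of a general dual span cannot be purely formal.
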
 

\noindent
\begin{remark}\label{rem:TeixRes} {\normalfont The previous result is sharp concerning non-emptiness of $B^2_d \cap U^s_C(2,d)$; indeed, on any smooth curve $C$ of genus $g \geq 3$ one has  
$B^2_d \cap U^s_C(2,d) = \emptyset$ for $d = 0,\; 1,\; 2$ (cf. \cite{Teixidor1}). Moreover, Theorem \ref{Teixidor} has a {\em residual version}, giving information also on the isomorphic Brill Noether locus \linebreak $B^{d-2g+4}_{4g-4-d}\cap U^s_C(2,4g-4-d)$. 
Indeed, for any non--negative integer $i$, if one sets $k_i := d-2g+2+i$ and $$B_d^{k_i} : = \{\mathcal F \in U_C(2,d)\;|\: h^0(\mathcal F) \ge k_i\} =  \{\mathcal F \in U_C(2,d)\;|\: h^1(\mathcal F) \ge i\},$$one has natural isomorphisms 
$B_d^{k_i} \simeq B_{4g-4-d}^{i}$, which arise from the natural correspondence \linebreak $\mathcal F \to \omega_C \otimes \mathcal F^*$, from Serre duality and from semistability. Under this  natural {\em residual correspondence} one has: 
}
\end{remark}

\begin{theorem}[Residual Version of Theorem \ref{Teixidor}]\label{TeixidorRes} Given a non-singular curve $C$ of genus $g$, an integer $d$, where $2g-3\le d\le 4g-7$, let $k_2:= d-2g+4$. Then,   
$B^{k_2}_d \cap U^s_C(2,d)$ has a component $\mathcal B$ of (expected)  dimension $\rho^{k_2}_d=8g-2d-11$ and a general point on it corresponds to a vector bundle whose space of sections has dimension $k_2$. If $C$ is general, this is the only 
component of $B^{k_2}_d \cap U^s_C(2,d)$. Moreover, $B^{k_2}_d \cap U^s_C(2,d)$ has extra components if and only if $W^1_n$ 
is non-empty with $\dim  W^1_n\ge 2g + 2n - d - 5$,  for some integer $n$ such that $2n<4g-4-d$.
\end{theorem}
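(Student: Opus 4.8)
The plan is to derive this statement directly from Theorem~\ref{Teixidor} by transporting each of its assertions through the residual correspondence of Remark~\ref{rem:TeixRes}. Setting $d' := 4g-4-d$, I would first observe that $k_2 = d-2g+4$ is precisely the value $k_i$ for $i=2$ in the notation of the Remark, so that the map $\mathcal F \mapsto \omega_C \otimes \mathcal F^*$ gives an isomorphism
\[
B^{k_2}_d \cap U^s_C(2,d) \;\simeq\; B^{2}_{d'} \cap U^s_C(2,d').
\]
The next step is to check that the admissible range $2g-3 \le d \le 4g-7$ corresponds exactly to the range $3 \le d' \le 2g-1$ in which Theorem~\ref{Teixidor} applies: indeed $d' \ge 3 \Leftrightarrow d \le 4g-7$ and $d' \le 2g-1 \Leftrightarrow d \ge 2g-3$. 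Everything then reduces to reading off Theorem~\ref{Teixidor} for the degree $d'$ and pulling the conclusions back to degree $d$.

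For the existence and the dimension of the component, I would apply Theorem~\ref{Teixidor} to obtain a component $\mathcal B' \subseteq B^2_{d'} \cap U^s_C(2,d')$ of dimension $\rho^2_{d'} = 2d'-3$, and note the arithmetic identity $2d'-3 = 8g-2d-11 = \rho^{k_2}_d$; since the residual map is a scheme isomorphism, the image $\mathcal B$ of $\mathcal B'$ is a component of $B^{k_2}_d \cap U^s_C(2,d)$ of the same dimension. To compute the generic number of sections, I would combine Serre duality, which gives $h^0(\omega_C \otimes (\mathcal F')^*) = h^1(\mathcal F')$, with Riemann--Roch for the rank $2$ bundle $\mathcal F'$ of degree $d'$, which gives $h^1(\mathcal F') = h^0(\mathcal F') - (d'-2g+2) = 2g-d' = k_2$ whenever $h^0(\mathcal F')=2$. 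Hence the bundle $\mathcal F = \omega_C\otimes (\mathcal F')^*$ attached to the generic point of $\mathcal B$ satisfies $h^0(\mathcal F) = k_2$, as required.

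The last two assertions follow by the same dictionary. When $C$ is general, the uniqueness of $\mathcal B'$ in $B^2_{d'}$ gives at once the uniqueness of $\mathcal B$ in $B^{k_2}_d$, since the residual correspondence is a bijection on irreducible components. For the characterisation of extra components, I would simply substitute $d' = 4g-4-d$ into the inequality $\dim W^1_n \ge d'+2n-2g-1$ and the constraint $2n<d'$ of Theorem~\ref{Teixidor}, obtaining $\dim W^1_n \ge 2g+2n-d-5$ and $2n<4g-4-d$, which are exactly the conditions in the statement. The one point that genuinely requires care, rather than mere substitution, is verifying that $\mathcal F \mapsto \omega_C \otimes \mathcal F^*$ is an isomorphism of schemes preserving (semi)stability --- so that ``component'', reducedness and dimension are faithfully transported; this is the technical heart, and it follows from stability being preserved under dualizing and tensoring by a line bundle, together with the functoriality of the construction in families and its being an involution up to canonical isomorphism.
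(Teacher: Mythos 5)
Your proposal is correct and follows exactly the route the paper intends: the paper states Theorem~\ref{TeixidorRes} as an immediate consequence of Theorem~\ref{Teixidor} via the residual correspondence $\mathcal F \mapsto \omega_C\otimes\mathcal F^*$ set up in Remark~\ref{rem:TeixRes}, and your write-up simply makes explicit the translation of ranges, dimensions, section counts and the $W^1_n$ conditions, all of which check out arithmetically.
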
 Inspired by Theorem \ref{TeixidorRes}, in \cite{CFK} we focused on $B^{k_2}_d$ as above, for $C$ a {\em general $\nu$-gonal curve} of genus $g$, i.e. $C$ corresponding to a general point of the 
{\em $\nu$-gonal stratum} $\mathcal  M^1_{g,\nu} \subset \mathcal M_g$. Observe that in this case, as a consequence of Theorem \ref{TeixidorRes}, $B^{k_2}_d \cap U^s_C(2,d)$ is empty for 
$d = 4g-4,\; 4g-5, \;4g-6$ and it consists only of the irreducible component  $\mathcal B$ as in Theorem \ref{TeixidorRes}, for any $4g-4-2\nu \leq d \leq 4g-7$ (cf.\;Remark\;\ref{rem:important} below).

Concerning the residual values for $d$, the aim of this paper is two--fold. The first is to strongly improve \cite[Theorem\;3.1]{CFK}, \color{black} where we proved the following result: 

\begin{theorem}\label{thm3.1}(cf.\;\cite[Theorem\;3.1]{CFK}) Let 
$$3\le \nu \le \frac{g+8}{4} \;\; {\rm and} \;\;3g-1\le d\le 4g-6-2 \nu$$be integers. Then, the reduced components of $B^{k_2}_d \cap U^s_C(2,d)$ are only two,  which we denote by $B_{\rm reg}$ and $B_{\rm sup}$:
\begin{enumerate}
\item[(i)]
 The component $B_{\rm reg}$ is {\em regular},  i.e. generically smooth and of  dimension $\rho^{k_2}_d=8g-2d-11$.    A general element $\mathcal F$ of $B_{\rm reg}$ fits in an exact sequence
 $$ 0\to \omega_C(-D) \to \mathcal F\to \omega_C(-p)\to 0,$$where $p\in C$ and $D\in C^{(4g-5-d)}$ are general. Specifically,  $s_1(\mathcal F) \ge 1$ (resp., $2$)  if $d$ is odd (resp., even). Moreover, $\omega_C(-p)$ is of minimal degree among special quotient line bundles of $\mathcal F$ and $\mathcal F$ is very ample for $\nu \ge 4$;  
\item[(ii)] The component $B_{\rm sup}$ is generically smooth,  of dimension $6g-d-2 \nu -6 > \rho^{k_2}_d$, i.e. $B_{\rm sup}$ is {\em superabundant}. A general element $\mathcal F$ of $B_{\rm sup}$ is  very-ample and   fits in an exact sequence
$$0\to N\to \mathcal F\to \omega_C\otimes A^{\vee} \to 0,$$where $A \in \pic^{\nu}(C)$ such that $|A| = g^1_{\nu}$ on $C$ and 
where $N\in\pic^{d-2g+2+\nu}(C)$ general. Moreover, 
$s_1(\mathcal F)=4g-4 - d - 2\nu$ and $\omega_C \otimes A^{\vee}$ is of minimal degree 
among quotient line bundles of $\mathcal F$.
\end{enumerate}
\end{theorem}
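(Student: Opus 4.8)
The plan is to exploit the residual correspondence recorded in Remark~\ref{rem:TeixRes} to replace each $\mathcal F \in B^{k_2}_d \cap U^s_C(2,d)$ by its Serre dual $\mathcal G := \omega_C \otimes \mathcal F^\vee$, which is stable of degree $n := 4g-4-d$ with $h^0(\mathcal G) = h^1(\mathcal F) \ge 2$; under our hypotheses $n \in [2\nu+2,\,g-3]$, so the whole problem is converted into the study of the low--degree locus $B^2_n$, and it suffices to classify its components and transport the conclusions back by $\mathcal G \mapsto \omega_C \otimes \mathcal G^\vee$. The advantage is that a general member of any component of $B^2_n$ carries exactly two independent sections $s_0,s_1$, and the single class $s_0 \wedge s_1 \in H^0(\det \mathcal G)$ controls a clean dichotomy that I expect to produce precisely the two families of the statement.

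First I would establish this dichotomy. If $s_0 \wedge s_1 \neq 0$, the two sections are generically independent, $\det\mathcal G$ is effective, and saturating the rank--$2$ subsheaf they generate exhibits $\mathcal G$ as an extension $0 \to \mathcal O(p) \to \mathcal G \to \mathcal O(D) \to 0$ of effective line bundles of degrees $1$ and $n-1 = 4g-5-d$; dualizing and twisting by $\omega_C$ gives back exactly $0 \to \omega_C(-D) \to \mathcal F \to \omega_C(-p) \to 0$, which is part (i). If instead $s_0 \wedge s_1 = 0$, then $s_0,s_1$ are everywhere proportional in the fibres, so they factor through a common line subbundle $B \hookrightarrow \mathcal G$ with $h^0(B) \ge 2$; since $C$ is a general $\nu$--gonal curve, its Brill--Noether theory forces the unique low--degree pencil to be the gonality one, whence $B = A$ with $|A| = g^1_\nu$ and $0 \to A \to \mathcal G \to \omega_C \otimes N^\vee \to 0$, which residualizes to part (ii). This is, in essence, the structural reason why the ``extra'' component predicted abstractly in Theorem~\ref{TeixidorRes} is here realized by the gonality bundle.

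Next, for each family I would carry out the dimension count over the parameter space of (sub/quotient line bundle, extension class modulo scalars), subtracting automorphisms. For $B_{\rm reg}$ the free data are $p \in C$, $D \in C^{(4g-5-d)}$ and the projectivized $\mathrm{Ext}^1$, which I expect to total $\rho^{k_2}_d = 8g-2d-11$. For $B_{\rm sup}$ the gonality bundle $A$ is rigid (a general $\nu$--gonal curve carries only finitely many $g^1_\nu$), while $N \in \pic^{\,d-2g+2+\nu}(C)$ sweeps a $g$--dimensional family; together with the extension class this should yield $6g-d-2\nu-6$, which exceeds $\rho^{k_2}_d$ in the stated range, giving superabundance. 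In parallel I would verify on a general member of each family the numerical claims by standard base--point--freeness and separation arguments for the displayed series: semistability and in fact stability read off the $s_1$--invariant ($s_1 \ge 1$ or $2$ according to the parity of $d$ for $B_{\rm reg}$, and $s_1 = 4g-4-d-2\nu$ for $B_{\rm sup}$), the exact equality $h^0(\mathcal F) = k_2$, minimality of the special quotient $\omega_C(-p)$ respectively $\omega_C \otimes A^\vee$, and very ampleness (for $\nu \ge 4$ in the regular case, unconditionally in the superabundant one).

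The two genuinely hard points are smoothness and exhaustiveness. For smoothness I would identify the Zariski tangent space to $B^{k_2}_d$ at a general $\mathcal F$ with the first--order deformations of the pair (bundle, space of sections), governed by a Petri--type cup--product map; controlling its rank gives generic smoothness and pins the local dimension, so that $B_{\rm reg}$ is reduced of dimension $\rho^{k_2}_d$ and $B_{\rm sup}$ is generically smooth of its larger dimension. The principal obstacle, however, is exhaustiveness: proving that these are the \emph{only} reduced components. This is exactly where the hypotheses $3 \le \nu \le (g+8)/4$ and $3g-1 \le d \le 4g-6-2\nu$ are indispensable. They must be used to bound the degree of every special quotient line bundle of $\mathcal F$ and, via the precise description of $W^r_n$ on a general $\nu$--gonal curve, to rule out all intermediate possibilities in the dichotomy---in particular any line subbundle of $\mathcal G$ carrying a pencil that is a proper multiple of $A$ or a non--gonal $g^1_m$. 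I expect the delicate bookkeeping to concentrate precisely in excluding these intermediate families, the numerical range being calibrated so that each of them either fails to be stable, has too few sections, or collapses into one of the two families already found.
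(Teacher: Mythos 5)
Your residual dichotomy on $s_0\wedge s_1$ is in substance the paper's case division on the speciality of the quotient line bundle ($s_0\wedge s_1=0$ exactly when both sections of $\mathcal G$ factor through a single line subbundle, i.e.\ $h^1(L)=2$), and your outline for $B_{\rm sup}$ matches the paper's construction. The genuine gap is in the construction of $B_{\rm reg}$. A \emph{general} extension $0\to\mathcal O_C(p)\to\mathcal G\to\mathcal O_C(D)\to 0$ (equivalently $0\to\omega_C(-D)\to\mathcal F\to\omega_C(-p)\to 0$) has $h^0(\mathcal G)=1$, not $2$: since $h^1(\mathcal O_C(p))=g-1>0$, forcing the coboundary $H^0(\mathcal O_C(D))\to H^1(\mathcal O_C(p))$ to vanish imposes $g-1$ independent linear conditions on the extension class. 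Your ``free data'' $p$, $D$, $\mathbb P(\ext^1)$ therefore total $1+(4g-5-d)+(5g-8-d)=9g-12-2d$, which exceeds $\rho^{k_2}_d=8g-11-2d$ by exactly $g-1$; the correct parameter space is the codimension-$(g-1)$ linear subspace $\mathcal W_1=\{u\;|\;\partial_u=0\}$ of \eqref{W1} (cf.\ Lemma \ref{lem:i=1.2}). This is not a bookkeeping fix: stability and the Segre bounds $s_1(\mathcal F)\ge 1$ (resp.\ $2$) must then be established for the general point of this \emph{proper linear subspace}, which the paper does by checking that $\widehat{\mathcal W}_1=\mathbb P(\mathcal W_1)$ is not contained in the relevant secant variety of $\varphi_{|K_C+D-p|}(C)$ and invoking the Lange--Narasimhan criterion (Theorem \ref{LN}); a ``general extension'' argument is simply unavailable here.

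The two further load-bearing steps you defer are exactly where the hypotheses enter and cannot be waved through. (a) In the case $s_0\wedge s_1\ne 0$ you assert that a section generates a degree-one subbundle of $\mathcal G$; a priori a general section vanishes on a divisor of some degree $e\ge 0$, and excluding every $e\ne 1$ (i.e.\ every special quotient $L$ of $\mathcal F$ with $h^1(L)=1$ other than $\omega_C(-p)$) is the content of Proposition \ref{lem:i=1.1}: a corank analysis of the determinantal loci $\mathcal W_1$ via Theorem \ref{CF5.8} combined with Martens' theorem to show each such family has dimension $<\rho^{k_2}_d$. (b) In the case $s_0\wedge s_1=0$ the common subbundle $B$ need not equal $A$: on a general $\nu$-gonal curve one only gets $B\cong A+B_b$ with $B_b$ an effective base locus, and showing these loci do not form extra components requires the flat-specialization argument of Lemma \ref{lemsup} (surjectivity of $H^1(N+A-K_C)\to H^1(N+2B_b+A-K_C)$ via elementary transformations). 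Finally, your starting point that a general member of any component of $B^2_n$ has exactly two sections is itself Laumon's lemma (Lemma \ref{speciality3}), not a formality.
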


In the present paper, \color{black} under conditions 
\begin{eqnarray}\label{eq:ourbounds}
\nu \geq 3 \;\; {\rm and} \;\;3g-5\le d\le 4g-5-2 \nu, 
\end{eqnarray} we first prove the following:

\begin{theorem}\label{thm:main2} For $\nu$ and $d$ as in \eqref{eq:ourbounds}, the irreducible components of $B^{k_2}_d \color{black} \cap 
U^s_C (2,d)$ \color{black} are only two,  which we denote by $B_{\rm reg}$ and $B_{\rm sup}$. 
\begin{enumerate}
\item[(i)] The component $B_{\rm reg}$ is {\em regular}  and {\em uniruled}.  A general element $\mathcal F$ of $B_{\rm reg}$    fits   in an exact sequence
\begin{equation}\label{exactB0} 
 0\to \color{black} \omega_C(-D) \color{black} \to \mathcal F\to  \color{black} \omega_C(-p) \color{black} \to 0,  
\end{equation}where $p\in C$ and $D\in C^{(4g-5-d)}$ are general. Moreover, $ \color{black} \omega_C(-p) \color{black}$ is \color{black}of minimal degree \color{black} among special quotient line bundles of $\mathcal F$.
  
\item[(ii)] If $3g-3 \leq d \leq 4g-5-2\nu$, the component $B_{\rm sup}$ is generically smooth,  of dimension $6g-d-2 \nu -6 > \rho^{k_2}_d$, i.e. $B_{\rm sup}$ is {\em superabundant}, and {\em ruled}.

\noindent
If otherwise $d = 3g-5,\;3g-4$,  the component $B_{\rm sup}$ is of dimension $6g-d-2 \nu -6 \geq \rho^{k_2}_d$, where equality holds only for $\nu=\frac{g}{2}$ and $d=3g-5$; 
the component $B_{\rm sup}$ is {\em ruled} and {\em superabundant}, being non--reduced. 

In any case, a general element $\mathcal F$ of $B_{\rm sup}$    fits   in an exact sequence
\begin{equation}\label{exactB1}
0\to N\to \mathcal F\to  \color{black} \omega_C \otimes A^{\vee} \to 0, 
\end{equation} \color{black} where $A \in \pic^{\nu}(C)$  such that  $|A| = g^1_{\nu}$ on $C$ and where \color{black}  $N\in\pic^{d-2g+2+\nu}(C)$ is general. Moreover, 
$s(\mathcal F)=4g-4 - d - 2\nu$ and $\color{black} \omega_C \otimes A^{\vee} $ \color{black} is \color{black} of minimal degree 
among quotient line bundles \color{black} of $\mathcal F$.
\end{enumerate}
\end{theorem}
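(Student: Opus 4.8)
The strategy is to leverage the residual Teixidor classification of Theorem~\ref{TeixidorRes} and then to determine, through the Brill--Noether geometry of a general $\nu$--gonal curve, exactly which extra components survive. By Theorem~\ref{TeixidorRes}, in the range $2g-3\le d\le 4g-7$ — which contains \eqref{eq:ourbounds} — the locus $B^{k_2}_d\cap U^s_C(2,d)$ always possesses the regular component $\mathcal B$ whose general member sits in \eqref{exactB0}; I set $B_{\rm reg}:=\mathcal B$. The same theorem reduces the existence and shape of the remaining components to the numerical criterion that $W^1_n(C)$ be non--empty with $\dim W^1_n\ge 2g+2n-d-5$ for some $n$ with $2n<4g-4-d$. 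The plan is thus: (1) record $B_{\rm reg}$; (2) analyse $W^1_n(C)$ at a general point of $\mathcal M^1_{g,\nu}$ and show that all the extra components collapse onto the single locus $B_{\rm sup}$ of \eqref{exactB1}; (3) compute dimensions and the superabundant/ruled properties; (4) handle the boundary values $d=3g-5,\,3g-4$.

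Step~(2) is the main obstacle. Here I would describe $W^1_n(C)$ completely for a general $\nu$--gonal $C$. The constraint $2n<4g-4-d$ together with $d\ge 3g-5$ forces $n<\tfrac{g+1}{2}$, a range in which the Brill--Noether theory of general gonal curves gives $\dim W^1_n=n-\nu$, every such pencil being $g^1_\nu$ together with $n-\nu$ base points (no base--point--free pencil of the classical dimension $2n-g-2\ge 0$ can occur, as that would require $n\ge\tfrac{g+2}{2}$). Inserting $\dim W^1_n=n-\nu$ into the criterion $\dim W^1_n\ge 2g+2n-d-5$ confines the admissible $n$ to a bounded interval with left endpoint $\nu$, and in every case the moving part of the resulting sub--line--bundle equals $A$, where $|A|=g^1_\nu$. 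Since a general $\nu$--gonal curve carries only finitely many $g^1_\nu$'s, the base--point data are absorbed into the free parameter $N\in\pic^{d-2g+2+\nu}(C)$ of \eqref{exactB1}, so that every extra component coincides with the single locus $B_{\rm sup}$. The delicate heart of the argument is to verify that distinct admissible $n$ do not split $B_{\rm sup}$ into several pieces and that the construction indeed sweeps out a whole irreducible component.

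For the dimensions in step~(3) the two cases are governed by the speciality of the sub--line--bundle. In \eqref{exactB1}, $N$ is non--special for general $N$ (one checks $\deg N\ge g$ and $h^1(N)=0$ under \eqref{eq:ourbounds}), so the connecting homomorphism $H^0(\omega_C\otimes A^\vee)\to H^1(N)$ vanishes and \emph{every} extension class yields $h^0(\mathcal F)=k_2$; as $\Ext^1(\omega_C\otimes A^\vee,N)=H^1(N\otimes A\otimes\omega_C^{-1})$ with the twist of negative degree, $\P(\Ext^1(\omega_C\otimes A^\vee,N))$ has dimension $5g-d-2\nu-6$, and fibering over the $g$--dimensional torus $\pic^{d-2g+2+\nu}(C)$ gives $\dim B_{\rm sup}=6g-d-2\nu-6$. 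Comparison with $\rho^{k_2}_d=8g-2d-11$ yields superabundance, with equality exactly for $\nu=g/2$, $d=3g-5$; the Segre invariant $s(\mathcal F)=4g-4-d-2\nu$ is read off the minimal quotient degree and confirms stability. In \eqref{exactB0}, by contrast, the sub $\omega_C(-D)$ is special, so reaching $h^0(\mathcal F)=k_2$ forces the extension class into the subspace on which the connecting map vanishes; this subspace has the expected codimension $g-1$, cutting $\P(\Ext^1(\omega_C(-p),\omega_C(-D)))$ down so that $B_{\rm reg}$ has exactly the expected dimension $\rho^{k_2}_d$, and a Petri--map computation gives generic smoothness, hence regularity. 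Finally, $B_{\rm sup}$ is birational to a projective bundle over the abelian variety $\pic^{d-2g+2+\nu}(C)$, hence ruled, while $B_{\rm reg}$ is uniruled because the still positive--dimensional admissible extension families sweep rational curves through a general member.

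Step~(4) treats $d=3g-5,\,3g-4$, where I would establish non--reducedness of $B_{\rm sup}$ infinitesimally. The Zariski tangent space to $B^{k_2}_d$ at a general $\mathcal F\in B_{\rm sup}$ is the annihilator of the image of the Petri map $\mu_{\mathcal F}\colon H^0(\mathcal F)\otimes H^0(\omega_C\otimes\mathcal F^\vee)\to H^0(\omega_C\otimes\mathcal F\otimes\mathcal F^\vee)$, whence $\dim T_{[\mathcal F]}=\rho^{k_2}_d+\dim\ker\mu_{\mathcal F}$. I would compute $\dim\ker\mu_{\mathcal F}$ from the extension \eqref{exactB1} and the identity $|A|=g^1_\nu$, and compare it with the excess $\dim B_{\rm sup}-\rho^{k_2}_d$: for $3g-3\le d\le 4g-5-2\nu$ the two agree, giving generic smoothness, whereas for $d=3g-5,\,3g-4$ the kernel is strictly larger, forcing $\dim T_{[\mathcal F]}>\dim B_{\rm sup}$ and hence non--reducedness. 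The subtlest case is $\nu=g/2$, $d=3g-5$, where $\dim B_{\rm sup}=\rho^{k_2}_d$: here non--reducedness cannot be detected by any dimension excess and must be certified purely by exhibiting $\ker\mu_{\mathcal F}\neq 0$.
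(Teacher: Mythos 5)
Your overall architecture differs from the paper's in a way that leaves the central step unproved. You propose to derive the classification of components from the ``extra components'' criterion in Theorem \ref{TeixidorRes}, but that statement is only an \emph{existence} criterion: it says extra components exist if and only if some $W^1_n$ is large enough, and it says nothing about how many extra components there are, nor that the general member of every extra component is presented by an extension whose quotient is governed by such a $W^1_n$. You yourself flag that ``the delicate heart of the argument'' is to show that all admissible $n$ produce the single locus $B_{\rm sup}$ and that this locus is a whole irreducible component, but no argument is supplied, and none can be extracted from Theorem \ref{TeixidorRes} as stated. The paper takes a different route that supplies exactly this missing content: by Theorem \ref{CF} every special semistable $\mathcal F$ is an extension of a special line bundle $L$; Lemma \ref{speciality3} (via Laumon's elementary-transformation lemma) rules out $h^1(\mathcal F)\ge 3$, so $h^1(L)\in\{1,2\}$ for the general member of any component; Proposition \ref{lem:i=1.1} then shows by a parametric dimension count (using Martens' theorem) that $h^1(L)=1$ yields only $B_{\rm reg}$, and Lemma \ref{lemsup} shows by a flat-specialization argument (absorbing the base locus $B_b$ of $K_C-L\cong A+B_b$ via the surjection $H^1(N+A-K_C)\twoheadrightarrow H^1(N+2B_b+A-K_C)$) that $h^1(L)=2$ yields only $B_{\rm sup}$. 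Without a substitute for these two uniqueness statements your proof does not establish that the components are ``only two.''

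Two further gaps. First, you assert that $s(\mathcal F)=4g-4-d-2\nu$ is ``read off the minimal quotient degree,'' but the nontrivial point is precisely that a general extension in $\ext^1(\omega_C\otimes A^{\vee},N)$ admits no sub-line bundle of degree larger than $\deg N$; the paper proves this (and likewise the stability of the general member of $\mathcal W_1$ in the regular case, which you do not address at all) with the Lange--Narasimhan secant-variety criterion, Theorem \ref{LN}, including a separate ad hoc argument for the odd boundary value $d=4g-5-2\nu$ where that criterion does not apply. Second, in the cases $d=3g-5,\,3g-4$ the tangent-space computation alone does not prove non-reducedness: one must also know that $B_{\rm sup}$ is genuinely a component, i.e.\ not strictly contained in some larger component of $B^{k_2}_d$, and this again is exactly what Lemma \ref{lemsup} delivers and is absent from your outline. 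On the positive side, your Petri-map computation for $B_{\rm sup}$ (splitting the kernel as $\ker\mu_{0,A}\oplus\ker\mu_{A,N}$ via the base-point-free pencil trick) matches the paper's Claim \ref{cl:ker}, and your identification of the critical case $(\nu,d)=(\tfrac{g}{2},3g-5)$, where non-reducedness must be certified by the tangent space rather than by a dimension excess over $\rho^{k_2}_d$, is correct.
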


\begin{remark}{\normalfont  (i) Notice that Theorem \ref{thm:main2} strongly improves \cite[Theorem 3.1]{CFK} \color{black} (i.e. Theorem \ref{thm3.1} reported above) \color{black}  from several points of view. 
First of all, Theorem \ref{thm:main2} holds for any $\nu \geq 3$ and for any $3g-5 \leq d \leq 4g-5-2\nu$, whereas \cite[Theorem 3.1]{CFK} was proved under the 
assumptions $3 \leq \nu \leq \frac{g+8}{4} \;\; {\rm and} \;\; 3g-1 \leq d \leq 4g-6-2\nu$ (cf. \cite[formula\;(3.1)]{CFK}). Moreover, no reducedness assumption appears in the statement of Theorem \ref{thm:main2}, as it occurred in \cite[Theorem 3.1]{CFK}. In fact, for $d = 3g-5,\; 3g-4$, in this paper we discover non--reduced components of  $B_d^{k_2} \cap U^s_C(2,d)$. Finally, Theorem \ref{thm:main2} 
contains important information on the {\em birational structure} of these irreducible components (i.e. ruledness, uniruledness, etc.).

\vskip 3pt

\noindent
(ii) Theorem \ref{thm:main2} also exhibits the postulated (by Theorem \ref{TeixidorRes}) reducibility of $B^{k_2}_d\cap U_C^s(2,d)$ for a general $\nu$-gonal curve, 
in the interval $3g-5\le d\le 4g-5-2 \nu$. Indeed,  in such cases, one can take $n = \nu$ and $W^1_{\nu} = \{A\}$.

\vskip 3pt

\noindent
(iii) Theorem \ref{thm:main2} moreover shows that Teixidor I Bigas' component $\mathcal B$ in Theorem \ref{TeixidorRes} coincides with our component $B_{\rm reg}$ (cf. Remark \ref{rem:min} below).

\vskip 3pt

\noindent
(iv) The strategies used in the proof of Theorem \ref{thm:main2} can be used also for the remaining values of $d$, namely 
$2g-3 \leq d \leq 3g-6$. One can find non--emptiness results also in these cases, nevertheless the description of the (birational) geometry of $B^{k_2}_d$ and of a general point of any 
of its irreducible components is not as precise as in the statement of Theorem \ref{thm:main2}.

\vskip 3pt 

\noindent
(v) \color{black} Other consequences \color{black} of Theorem \ref{thm:main2}  are also discussed (cf.\;\color{black}Corollary\color{black}\;\ref{fixeddeterminant} below). 
} 
\end{remark}

The second aim of the paper is concerned with Hilbert schemes of surface scrolls. Precisely, after proving some sufficient very--ampleness conditions for bundles arising from 
Theorem \ref{thm:main2} (cf.\;Theorem \ref{thm:veryampleness} below), we can study suitable components of the Hilbert scheme $\mathcal H_{d,g,k_2-1}$ parametrizing smooth surface scrolls $S$ of degree $d$, 
sectional genus $g$, speciality $2$, which are linearly normal in $\mathbb{P}^{k_2-1}$. 

The range for $d$ in which we study Hilbert schemes are taken from Theorem \ref{thm:veryampleness} and from \eqref{eq:ourbounds}. Notice indeed that the inequality 
$d\leq 4g-5-2\nu$ in \eqref{eq:ourbounds} yields $ d\leq 4g-11$ for $\nu=3$ and hence the study of Hilbert schemes of surface scrolls will be considered in this maximal range $ d\leq 4g-11$.
Precisely, we show:

\begin{theorem}\label{thm:Hilb} Consider the Hilbert scheme $\mathcal H_{d,g,k_2-1}$ as above. Then:  
\vskip 2pt

\noindent
(i)  for $3g-1\leq d \leq 4g-11$, $\mathcal H_{d,g,k_2-1}$ contains an irreducible component, denoted by $\mathcal H_{\rm reg}$,  whose  general point 
corresponds to a smooth scroll $S$ as above, arising from a stable bundle as in \eqref{exactB0} where $C$ is with general moduli 
(i.e. $\mathcal H_{\rm reg}$ dominates $\mathcal M_g$ ). Moreover, $\mathcal H_{\rm reg}$ is generically smooth, of (expected) dimension 
$$\dim\; \mathcal H_{\rm reg} =7g-7 + k_2(k_2-2) = 7g-7 + (d-2g+4)(d-2g+2)$$i.e. it is a {\em regular} component of $\mathcal H_{d,g,k_2-1}$. Scrolls arising from general (very ample) bundles 
in $B_{\rm reg}$  as in Theorem \ref{thm:main2} (i) fill--up a closed subset $\mathcal Y' \subsetneq \mathcal H_{\rm reg}$, 
where $\mathcal Y'$ dominates $\mathcal M^1_{g,\nu}$;

\vskip 2pt

\noindent
(ii)  for $3g-2\leq d\leq 4g-11$ (resp., $d=3g-3$),  $\mathcal H_{d,g,k_2-1}$ carries distinct irreducible components $\mathcal H_{\rm sup, \nu}$,  
for any $\nu$ with $3\leq \nu \leq [\frac{4g-5-d}{2}]$ (resp., $4\leq \nu \leq [\frac{4g-5-d}{2}]$).
$\mathcal H_{\rm sup, \nu}$ is generically smooth of dimension 
$$\dim\; \mathcal H_{\rm sup, \nu} =8g-d-12 + k_2^2 = 8g-d - 12 + (d - 2g + 4)^2$$
which is higher than the expected one, so it is a {\em superabundant} component of $\mathcal H_{d,g,k_2-1}$, unless $d=3g-3$. In case $d=3g-3$, $\mathcal H_{\rm sup, \nu}$ is a regular component for every possible $\nu$.

For any such $d$, $\mathcal H_{\rm sup, \nu}$ dominates $\mathcal M^1_{g,\nu}$ and  its general point corresponds to a smooth scroll $S$ as above, arising from a general  
$\mathcal F \in B_{\rm sup}$ as in Theorem \ref{thm:main2}. 
\end{theorem}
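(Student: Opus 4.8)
The plan is to realize every scroll parametrized by $\mathcal H_{d,g,k_2-1}$ as $S = \mathbb{P}(\mathcal F) \hookrightarrow \mathbb{P}^{k_2-1}$, embedded by the complete linear system $|\mathcal O_{\mathbb{P}(\mathcal F)}(1)|$ attached to a very ample rank $2$ bundle $\mathcal F$ on $C$ with $\deg \mathcal F = d$, $h^0(\mathcal F) = k_2$ and $h^1(\mathcal F) = 2$ (the speciality $2$ condition, since $H^1(\mathcal O_S(1)) = H^1(C,\mathcal F)$), and to parametrize such scrolls by the data $(C, \mathcal F, \text{projective frame})$. First I would invoke the very--ampleness criteria of Theorem \ref{thm:veryampleness} together with the classification of Theorem \ref{thm:main2} to guarantee that, in the stated ranges of $d$, the general members of $B_{\rm reg}$ and of $B_{\rm sup}$ are very ample, so that the associated $S$ is a smooth, linearly normal, special scroll with the prescribed invariants; conversely, pushing $\mathcal O_S(1)$ forward along the ruling $\pi\colon S \to C$ recovers $(C,\mathcal F)$, so the fibre of the parametrization over a fixed isomorphism class is exactly a $\mathrm{PGL}(k_2)$--orbit (the scalars act trivially on $\mathbb{P}(\mathcal F)$, and for $C$ general and $\mathcal F$ stable there are no further automorphisms). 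This yields the bookkeeping
\begin{equation*}
\dim \mathcal H \;=\; \dim(\text{moduli of } C) \;+\; \dim(\text{Brill--Noether locus of } \mathcal F) \;+\; \dim \mathrm{PGL}(k_2),
\end{equation*}
with $\dim \mathrm{PGL}(k_2) = k_2^2 - 1$. Substituting $\dim \mathcal M_g = 3g-3$ and $\dim B_{\rm reg} = \rho^{k_2}_d = 8g-2d-11$ gives $\dim \mathcal H_{\rm reg} = 7g-7 + k_2(k_2-2)$, while $\dim \mathcal M^1_{g,\nu} = 2g+2\nu-5$ and $\dim B_{\rm sup} = 6g-d-2\nu-6$ give $\dim \mathcal H_{\rm sup,\nu} = 8g-d-12+k_2^2$.

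Next I would carry out the deformation--theoretic core, namely the cohomology of the normal bundle $\mathcal N_S := \mathcal N_{S/\mathbb{P}^{k_2-1}}$. I would combine the restricted Euler sequence $0 \to \mathcal O_S \to H^0(\mathcal F)\otimes \mathcal O_S(1) \to T_{\mathbb{P}^{k_2-1}}|_S \to 0$, the normal sequence $0 \to T_S \to T_{\mathbb{P}^{k_2-1}}|_S \to \mathcal N_S \to 0$ and the relative tangent sequence $0 \to T_{S/C} \to T_S \to \pi^* T_C \to 0$, pushing everything down along $\pi$ by means of $\pi_* \mathcal O_S(1) = \mathcal F$, $R^1\pi_* \mathcal O_S(1) = 0$, $\pi_* \mathcal O_S = \mathcal O_C$ and the identification $T_{S/C} = \mathcal O_S(2)\otimes \pi^*(\det \mathcal F)^{-1}$, whence $\pi_* T_{S/C} \cong \Sym^2 \mathcal F \otimes (\det \mathcal F)^{-1}$ (the trace--free summand of $\mathcal F \otimes \mathcal F^{\vee}$). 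In this way the cohomology of $\mathcal N_S$ reduces, via the Leray spectral sequence, to cohomology on $C$ of $\mathcal O_C$, $\mathcal F$, $T_C$ and $\mathcal F \otimes \mathcal F^{\vee}$: here $H^1(C, T_C)$ contributes the $3g-3$ curve moduli and $H^1(C, \mathcal F \otimes \mathcal F^{\vee})$ the bundle moduli. Using the explicit extensions \eqref{exactB0} and \eqref{exactB1} together with the generality of $D,p$ and of $N$, one then computes $h^0(\mathcal N_S)$, $h^1(\mathcal N_S)$ and in particular $\chi(\mathcal N_S)$; I expect $\chi(\mathcal N_S) = 7g-7+k_2(k_2-2)$, the expected dimension of $\mathcal H_{d,g,k_2-1}$.

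I would then conclude component by component. Since $\dim_{[S]}\mathcal H \ge \dim \mathcal W$ for the constructed family $\mathcal W$ and $\dim_{[S]}\mathcal H \le h^0(\mathcal N_S)$ always, it suffices to verify $h^0(\mathcal N_S) = \dim \mathcal W$ at a general $[S]$: this forces $\mathcal H$ to be smooth at $[S]$ of dimension $\dim \mathcal W$, so $\overline{\mathcal W}$ is a generically smooth irreducible component. Comparing with the expected dimension, the excess is
\begin{equation*}
\dim \mathcal H_{\rm sup,\nu} - \chi(\mathcal N_S) \;=\; (8g-d-12+k_2^2) - (11g-2d-15+k_2^2) \;=\; d-3g+3,
\end{equation*}
which is positive for $d \ge 3g-2$ and vanishes for $d=3g-3$; this gives superabundance in the former range and regularity at $d=3g-3$, matching $h^1(\mathcal N_S) = d-3g+3$. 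For $\mathcal H_{\rm reg}$ the analogous excess is $0$, so it is regular. Distinctness of the $\mathcal H_{\rm sup,\nu}$ for different $\nu$ follows because the minimal--degree quotient $\omega_C \otimes A^{\vee}$ with $|A|=g^1_\nu$, and the invariant $s(\mathcal F)=4g-4-d-2\nu$, are recoverable from $S$ and depend on $\nu$; likewise $\mathcal H_{\rm reg} \ne \mathcal H_{\rm sup,\nu}$, since the base of a general scroll in $\mathcal H_{\rm reg}$ has general moduli whereas those in $\mathcal H_{\rm sup,\nu}$ are $\nu$--gonal. The domination statements ($\mathcal H_{\rm reg}\to \mathcal M_g$, $\mathcal H_{\rm sup,\nu}\to \mathcal M^1_{g,\nu}$ and $\mathcal Y' \to \mathcal M^1_{g,\nu}$) follow from the existence of $B_{\rm reg}$ on a general curve via Theorem \ref{TeixidorRes} and from the fact that $B_{\rm sup}$ is built from the gonality pencil $g^1_\nu$.

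The main obstacle will be the exact evaluation of $h^0(\mathcal N_S)$, i.e. proving generic smoothness with the precise count, since it rests on delicate vanishing of $H^1$ of the auxiliary bundles on $C$, sensitive both to the Brill--Noether position of $\mathcal F$ dictated by \eqref{exactB0} and \eqref{exactB1} and to the numerical range of $d$; the passage of $h^1(\mathcal N_S)$ through $0$ exactly at $d=3g-3$ is the quantitative signature of this difficulty. A secondary but genuine point is to show that the parametrization $(C,\mathcal F,\text{frame}) \mapsto [S]$ is generically finite onto its image with fibres the $\mathrm{PGL}(k_2)$--orbits (no hidden moduli), which requires the recovery of $(C,\mathcal F)$ from $S$ and the triviality of the pertinent automorphism groups.
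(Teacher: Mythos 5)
Your proposal follows the same architecture as the paper: parametrize scrolls by triples $(C,\mathcal F,\Phi)$ with $\Phi\in\mathrm{PGL}(k_2,\mathbb C)$, show the fibres are finite (the paper does this via Maruyama's exact sequence for $\mathrm{Aut}_C(\mathbb P(\mathcal F))$ together with simplicity of the stable bundle $\mathcal F$ and triviality of $\mathrm{Aut}(C)$), obtain the dimension counts $7g-7+k_2(k_2-2)$ and $8g-d-12+k_2^2$, and then close the argument by computing $h^0(\mathcal N_{S/\mathbb P^{k_2-1}})$ at a general point; your predicted values $\chi(\mathcal N_S)=7g-7+k_2(k_2-2)$ and $h^1(\mathcal N_S)=d-3g+3$ on the superabundant components are exactly what the paper proves. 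The one place where your route genuinely diverges is the normal-bundle cohomology for $\mathcal H_{\mathrm{sup},\nu}$: you propose a uniform push-forward along $\rho\colon S\to C$, reducing everything to $H^1$ of $\mathrm{Sym}^2\mathcal F\otimes(\det\mathcal F)^{-1}$, $T_C$ and the Euler data. The paper uses that strategy only for $\mathcal H_{\rm reg}$ (where it evaluates the resulting multiplication maps at the split bundle $(K_C-D)\oplus(K_C-p)$ and concludes by semicontinuity on $\mathcal W_1$); for $\mathcal H_{\mathrm{sup},\nu}$ it instead restricts $\mathcal N_{S/\mathbb P^{k_2-1}}$ to the minimal-degree section $\Gamma$ corresponding to the quotient $K_C-A$, kills $H^1(\mathcal N(-\Gamma))$ using $h^1(N)=0$, and identifies the key map $H^1(\alpha)^{\vee}$ with $\mu_{A,N}\colon H^0(N)\otimes H^0(A)\to H^0(N+A)$, whose surjectivity comes from the base-point-free pencil trick. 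Your uniform approach is plausible since the same maps $\mu_{0,A}$, $\mu_{A,N}$ would govern the cokernel, but be aware that this is precisely where the real work sits: your proposal states the expected answers rather than establishing them, and the value $h^1(\mathcal N_S)=d-3g+3$ cannot be deduced from the dimension excess alone --- one must prove the surjectivity of $\mu_{A,N}$ (equivalently the injectivity of $H^1(\alpha)$) independently, and then the excess confirms that $\overline{\mathcal X_\nu}$ is a whole component. The remaining points (distinctness of components via domination of the gonality strata, the exclusion of $\nu=3$ at $d=3g-3$ coming from the very-ampleness threshold $d+\nu\geq 3g+1$) are handled correctly.
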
 

\noindent
The rest of the paper will be concerned with the proof of the aforementioned results.

\color{black} In what follows, we may sometimes abuse notation and identify divisor classes with the corresponding line bundles, interchangeably using additive and multiplicative notation when this does not create ambiguity. \color{black}
For standard terminology, we refer the reader to \cite{H}.

\smallskip

\noindent
{\bf Acknowledgements}. The authors thank KIAS and Dipartimento di Matematica Universita' di Roma "Tor Vergata" for the warm atmosphere and hospitality during the collaboration and the preparation of this article. \color{black} The authors are indebted to the referee for the careful reading of the first version of the paper and for valuable comments and suggestions which have certainly improved the readability of the paper.\color{black}


\section{Preliminaries}\label{S:Pre} In what follows, $C$ will always denote a smooth, irreducible, projective curve of genus $g \geq 3$.  
We recall some standard notation and results frequently used below. 

Given a rank $2$ vector bundle $\mathcal F$ on $C$, the \textit{Segre invariant} $s(\mathcal F) \in \mathbb{Z}$ of $\mathcal F$ 
is defined by$$s(\mathcal F) = \min_{N \subset \mathcal F} \left\{   \deg\; \mathcal F  -   2\; \deg \; N  \right\},$$
where $N$ runs through all the sub-line bundles of $\mathcal F$.  One has 
$s(\mathcal F) = s(\mathcal F \otimes L)$, for any line bundle $L$, and 
$s(\mathcal F) = s (\mathcal F^*)$, where $\mathcal F^*$ denotes the dual bundle of $\mathcal F$. A sub-line bundle $ N \subset \mathcal F$ is called a \textit{maximal sub-line bundle} of $\mathcal F$ if $\deg \; N$ is maximal among all sub-line bundles of $\mathcal F$; in such a case $\mathcal F/N$ is a \textit{minimal  quotient line bundle} of $\mathcal F$, i.e. is of minimal degree among quotient line bundles of $\mathcal F$. In particular, $\mathcal F$ is {\em semistable}  (resp. {\em stable}) if and only if 
$s(\mathcal F) \ge 0$ (resp.   $s(\mathcal F) > 0$). 

Let $\delta$ be a positive integer. Consider $L\in \pic^\delta(C)$ and $N\in\pic^{d-\delta}(C)$. 
The extension space $\ext^1(L,N)$ parametrizes isomorphism classes of extensions and any vector $u\in\ext^1(L,N)$ gives rise to a degree $d$, rank $2$ vector bundle $\mathcal F_u$, fitting in an exact sequence
\begin{equation}\label{degree}
(u):\;\; 0 \to N \to \mathcal F_u \to L \to 0.
\end{equation}

\noindent 
In order for $\mathcal F_u$ as above to be semistable, a necessary condition is 
\begin{equation}\label{eq:neccond}
 2\delta-d \ge s(\mathcal F_u)\ge 0.
\end{equation} In such a case, the Riemann-Roch theorem gives 
\begin{equation}\label{eq:m}
\dim \; \ext^1(L,N)=
\begin{cases}
\ 2\delta-d+g-1\ &\text{ if } L\ncong N \\
\ g\ &\text{ if } L\cong  N.
\end{cases}
\end{equation}

Since we will deal with {\em special} rank 2 vector bundles $\mathcal F_u$, i.e. $h^1(\mathcal F_u) >0$, then $\mathcal F_u$ always admits a special quotient line bundle. 
Recall the following:

\begin{theorem}\label{CF}(\cite[Lemma\;4.1]{CF})
Let $\mathcal F$ be a semistable, special, rank $2$ vector bundle on $C$ of 
degree $d\ge 2g-2$. Then there exist a special, effective line bundle $L$ on $C$, of degree $\delta \leq d$, $N \in {\rm Pic}^{d-\delta}(C)$ and $ u \in \ext^1(L,N)$ such that 
$\mathcal F = \mathcal F_u$ as in \eqref{degree}.
\end{theorem}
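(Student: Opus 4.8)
The plan is to use the speciality of $\mathcal F$ to manufacture, via Serre duality, a nonzero map to $\omega_C$, and then to read off the required special and effective quotient line bundle as the image of that map. First I would record that speciality of $\mathcal F$ means $h^1(\mathcal F)>0$, so Serre duality gives
\[
0\neq H^1(\mathcal F)^{*}\cong H^0(\omega_C\otimes \mathcal F^{*})=\operatorname{Hom}(\mathcal F,\omega_C).
\]
Thus I may fix a nonzero homomorphism $\varphi\colon \mathcal F\to\omega_C$. Its image $\im(\varphi)$ is a nonzero subsheaf of the line bundle $\omega_C$ on the smooth curve $C$, hence itself a line bundle of the shape $L:=\omega_C(-E)$ for some effective $E\geq 0$. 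Since $\mathcal F\to L$ is then surjective with locally free (rank $1$) quotient, its kernel $N$ is a sub-line bundle of $\mathcal F$, and I obtain an exact sequence
\[
0\to N\to \mathcal F\to L\to 0
\]
of the form \eqref{degree}, with $N\in\pic^{d-\delta}(C)$, $\delta:=\deg L$. The bundle $\mathcal F$ is then $\mathcal F_u$ for $u\in\ext^1(L,N)$ the class of this extension.

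Two of the three required properties come essentially for free. The degree bound $\delta\leq d$ is immediate, since $\delta\leq\deg\omega_C=2g-2\leq d$. Speciality of $L$ is also automatic: by Serre duality and $L=\omega_C(-E)$,
\[
h^1(L)=h^0(\omega_C\otimes L^{*})=h^0(\mathcal{O}_C(E))\geq 1,
\]
because $E$ is effective — and this holds even in the degenerate case $E=0$, where $L=\omega_C$.

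The one genuinely nontrivial point, and the place where the hypotheses truly interact, is the \emph{effectiveness} of $L$; this is the step I would treat as the main obstacle. Here I would combine semistability with the degree hypothesis. Semistability of $\mathcal F$ forces $\deg N\leq d/2$ for the sub-line bundle $N$, hence $\delta=d-\deg N\geq d/2\geq g-1$, using $d\geq 2g-2$. Riemann--Roch together with the speciality just established then yields
\[
h^0(L)=\delta-g+1+h^1(L)\geq 0+1=1,
\]
so $L$ is effective. This completes the identification of $\mathcal F$ as $\mathcal F_u$ with $L$ a special, effective line bundle of degree $\delta\leq d$ and $N\in\pic^{d-\delta}(C)$, as required. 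The delicate aspect to watch is precisely that the degree estimate $\delta\geq g-1$ relies on both $d\geq 2g-2$ and semistability simultaneously; without the former one could not guarantee $h^0(L)>0$ from the numerical data alone.
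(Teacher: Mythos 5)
Your argument is correct and is essentially the standard proof of the cited result: the paper itself imports this statement from \cite[Lemma 4.1]{CF} without reproducing a proof, and the argument there proceeds exactly as you do, producing a nonzero map $\mathcal F\to\omega_C$ from $h^0(\omega_C\otimes\mathcal F^*)=h^1(\mathcal F)>0$, taking $L=\omega_C(-E)$ to be its image (which makes speciality of $L$ automatic), and using semistability together with $d\ge 2g-2$ to force $\deg L\ge g-1$ and hence $h^0(L)\ge h^1(L)\ge 1$ by Riemann--Roch. All the delicate points (saturation of $N$, the degenerate case $E=0$, and the interaction of the two hypotheses in the effectiveness step) are handled correctly.
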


Take $\mathcal F_u$ as in \eqref{degree}. When $(u)$ does not split, it defines a point $[(u)] \in \mathbb P (\ext^1(L,N)) \cong \mathbb P(H^0(K_C+L-N)^*) 
:= \mathbb{P}$. When the natural map $\varphi:=\varphi_{|K_C+L-N|}: C\to\mathbb P$ is a morphism, set $X:=\varphi(C)\subset \mathbb P$.  
For any positive integer $h$ denote by $\Sec_h(X)$ the $h^{st}$-{\em secant variety} of $X$, 
defined as the closure of the union of all linear subspaces $\langle \varphi(D)\rangle\subset\mathbb P$,
for general effective divisors $D$ of degree $h$ on $C$. One has $\dim\;\Sec_h(X)\ =\min\,\{\dim\:\mathbb P,\;2h-1\}$. Recall: 
\begin{theorem} (\cite[Proposition 1.1]{LN})\label{LN}
Let $2\delta-d\ge 2$;  then $\varphi$ is a morphism and, for any integer
$$s \equiv 2\delta-d\  \text{ (mod\ 2) } \; \; \; \mbox{such that} \; \; \; 4+ d-2\delta\le s\le 2\delta-d,$$one has
$$ s (\mathcal F_u)\ge s \Leftrightarrow [(u)]\notin \Sec_{\frac{1}{2}(2\delta-d+s-2)}(X). $$
\end{theorem}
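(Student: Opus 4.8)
The plan is to set up the Lange--Narasimhan dictionary between sub-line bundles of $\mathcal F_u$ and linear spans of divisors under $\varphi$, and then simply read off the Segre invariant. First I would check that $\varphi$ is a morphism: since $2\delta-d\ge 2$, the line bundle $K_C+L-N$ has degree $2g-2+(2\delta-d)\ge 2g$, and any line bundle of degree $\ge 2g$ on $C$ is base-point-free, so $\varphi:=\varphi_{|K_C+L-N|}$ is everywhere defined and $X=\varphi(C)$ makes sense.

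The core step is a lifting criterion. For an effective divisor $D$ on $C$, let $s_D$ be the corresponding section, giving an inclusion $L(-D)\hookrightarrow L$. Pulling back the extension $(u)$ along this inclusion, the map $L(-D)\hookrightarrow L$ lifts to a bundle map $L(-D)\to\mathcal F_u$ if and only if the image of $u$ under $\ext^1(L,N)\xrightarrow{\cdot s_D}\ext^1(L(-D),N)$ vanishes. Via Serre duality these groups are $H^0(K_C+L-N)^*$ and $H^0(K_C+L-N-D)^*$, and the displayed map is transpose to the inclusion $H^0(K_C+L-N-D)\hookrightarrow H^0(K_C+L-N)$ (multiplication by $s_D$). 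Hence the lift exists precisely when the functional $u$ annihilates $H^0(K_C+L-N-D)$, i.e. when $[(u)]\in\mathbb{P}(H^0(K_C+L-N-D)^{\perp})=\langle\varphi(D)\rangle$.

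Next I would pass to secant varieties and to the Segre invariant. Because $C^{(h)}$ is proper, the union $\bigcup_{D\in C^{(h)}}\langle\varphi(D)\rangle$ is closed and equals $\Sec_h(X)$, and the spans are monotone in $D$. Now any sub-line bundle $M\subset\mathcal F_u$ either maps to zero in $L$ --- then $M\hookrightarrow N$, so $\deg M\le d-\delta$ and it contributes $d-2\deg M\ge 2\delta-d$ --- or maps nontrivially to $L$; after saturating, such an $M$ equals $L(-D)$ for some effective $D$, and by the lifting criterion a sub-line bundle meeting $L$ nontrivially of degree $\ge\delta-h$ exists if and only if $[(u)]\in\Sec_h(X)$. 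Writing $h_{\min}:=\min\{h:[(u)]\in\Sec_h(X)\}$, this yields
$$ s(\mathcal F_u)=\min\bigl\{\,2\delta-d,\; d-2\delta+2h_{\min}\,\bigr\}. $$
Since $s\le 2\delta-d$, the condition $s(\mathcal F_u)\ge s$ reduces to $d-2\delta+2h_{\min}\ge s$, i.e. $h_{\min}\ge\tfrac{1}{2}(2\delta-d+s)$, which is equivalent to $[(u)]\notin\Sec_{\frac{1}{2}(2\delta-d+s-2)}(X)$; the parity hypothesis $s\equiv 2\delta-d$ makes this index an integer, and $s\ge 4+d-2\delta$ guarantees it is $\ge 1$, so the statement is non-vacuous.

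The hard part will be the lifting criterion and the bookkeeping around it: correctly identifying the Serre-dual map, handling non-reduced $D$ so that $\langle\varphi(D)\rangle$ is read as $\mathbb{P}(H^0(K_C+L-N-D)^{\perp})$ (equivalently, that $\Sec_h(X)$ is genuinely the closed union of these spans), and passing between an arbitrary sub-line bundle and its saturation so that the maximal degree of a sub-line bundle meeting $L$ nontrivially matches $\delta-h_{\min}$ exactly. Once this dictionary is in place, the computation of $s(\mathcal F_u)$ and the final equivalence are purely formal.
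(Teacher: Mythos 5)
Your argument is correct and is essentially the standard Lange--Narasimhan proof: the paper itself gives no proof of this statement (it is quoted directly from \cite[Proposition~1.1]{LN}), and your lifting criterion via the vanishing of the image of $u$ in $\ext^1(L(-D),N)$, the identification of that vanishing locus with $\langle\varphi(D)\rangle$ under Serre duality, and the resulting formula $s(\mathcal F_u)=\min\{2\delta-d,\,d-2\delta+2h_{\min}\}$ reproduce the argument of the cited source. The technical points you flag (closedness of the union of spans, non-reduced divisors, saturation) are exactly the ones handled there, so nothing essential is missing.
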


\section{Proof of Theorem \ref{thm:main2}}\label{S:proof} This section will be devoted to the proof of Theorem \ref{thm:main2}, which will be done in several steps (cf.\;\S's\;\ref{ss:extension},\;\ref{ss:regular},\;\ref{ss:superabundant} below).

\begin{remark}\label{rem:important} {\normalfont Notice first that, when $C$ is a general $\nu$-gonal curve, then $B^{k_2}_d \cap U^s_C(2,d)$:

\vskip 2pt

\noindent 
$(a)$ is empty, for $d = 4g-4,\; 4g-5, \;4g-6$ (cf. Remark \ref{rem:TeixRes}); 

\vskip 2pt

\noindent 
$(b)$ consists only of the component  $\mathcal B$ (of expected dimension $\rho_d^{k_2} = 8g-2d-11$) in Theorem \ref{TeixidorRes}, for any $4g-4-2\nu \leq d \leq 4g-7$. Indeed, conditions in 
Theorem \ref{TeixidorRes} guaranteeing reducibility are:  
$$2n < 4g-4-d, \;\; W^1_n \neq \emptyset \;\; {\rm and} \; \; \dim\;W^1_n  \geq 2g + 2n - d - 5.$$One must have $2\nu \leq 2 n$, since $C$ has no $g^1_n$ for $n < \nu$ (cf. \cite{AC}). Therefore 
$2 \nu \leq 2 n < 4g-4-d$ forces $d \leq 4g-5-2\nu$, which explains why $B_d^{k_2} \cap U^s_C(d)$ must be irreducible for $d \geq 4g-4-2\nu$. 
}
\end{remark}

\noindent
The previous remark motivates why we focus on  $d$ as in  \eqref{eq:ourbounds} in our Theorem \ref{thm:main2}.  

Before proving it, observe first \color{black} that its direct consequence is the following.\color{black}



\begin{corollary}\label{fixeddeterminant} With assumptions as in Theorem \ref{thm:main2}, let $M \in {\rm Pic}^d(C)$ be general, then the Brill-Noether locus $B_M^{k_2}(C)$, parametrizing 
semistable rank 2 vector bundles of given determinant $M$, with at least $k_2 = d-2g+4$ independent global sections, is not empty, even if its expected dimension 
$$\rho_M^{k_2} := 3g-3 - 2k_2= 3g-3 - 2(d-2g+4)$$is negative for $d > \frac{7g-11}{2}$
\end{corollary}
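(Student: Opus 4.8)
The plan is to deduce the non-emptiness of $B_M^{k_2}(C)$ for general fixed determinant $M$ directly from Theorem \ref{thm:main2}, by studying the determinant map on the superabundant component $B_{\rm sup}$. Recall that any $\mathcal{F} \in B_{\rm sup}$ fits in an exact sequence \eqref{exactB1}, $0 \to N \to \mathcal{F} \to \omega_C \otimes A^\vee \to 0$, with $A$ the (unique, since $C$ is general $\nu$-gonal) $g^1_\nu$ and $N \in \pic^{d-2g+2+\nu}(C)$ general. Taking determinants, such an $\mathcal{F}$ has $\det \mathcal{F} = N \otimes \omega_C \otimes A^\vee \in \pic^d(C)$. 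Since $A$ and $\omega_C$ are fixed while $N$ ranges over all of $\pic^{d-2g+2+\nu}(C)$ (a $g$-dimensional torsor), the induced determinant $N \mapsto N \otimes \omega_C \otimes A^\vee$ already sweeps out all of $\pic^d(C)$. Hence for a general $M \in \pic^d(C)$ there exists $N$ with $N \otimes \omega_C \otimes A^\vee \cong M$, and the corresponding bundles $\mathcal{F}_u$ built from extensions $u \in \ext^1(\omega_C \otimes A^\vee, N)$ lie in $B_{\rm sup}$ and have determinant $M$.

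The first step I would carry out is to verify that, for $M$ general, the extensions producing determinant $M$ actually furnish semistable bundles with the required number of sections, i.e. that they genuinely lie in $B_d^{k_2} \cap U^s_C(2,d)$. The semistability and the count $h^0(\mathcal{F}_u) \ge k_2$ are already guaranteed for the generic member of $B_{\rm sup}$ by Theorem \ref{thm:main2}(ii); the only thing to check is that fixing the determinant to be a general $M$ does not force us into a non-generic, possibly bad locus. Since the determinant constraint $N \cong M \otimes A \otimes \omega_C^\vee$ pins down $N$ to a single general line bundle once $M$ is general, and since the openness of the semistability and section-number conditions on the extension parameter $u$ is preserved, I would argue that a general $u \in \ext^1(\omega_C \otimes A^\vee, N)$ with this prescribed $N$ still yields $\mathcal{F}_u \in B_M^{k_2}(C)$. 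This reduces the whole statement to the non-vanishing of the relevant extension group and the persistence of the generic properties under the fibral restriction.

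The second step is the dimension bookkeeping needed to make the ``general $M$'' assertion precise: one considers the determinant morphism $\det\colon B_{\rm sup} \to \pic^d(C)$ and shows it is dominant. From \eqref{eq:m} the extension space $\ext^1(\omega_C \otimes A^\vee, N)$ has dimension $2\delta - d + g - 1$ with $\delta = 2g-2-\nu$, and $B_{\rm sup}$ has dimension $6g - d - 2\nu - 6$; subtracting the fiber dimension of $\det$ over a general point should leave the full $g = \dim \pic^d(C)$, confirming dominance and thus non-emptiness of $B_M^{k_2}(C)$ for general $M$. Finally, the computation $\rho_M^{k_2} = 3g-3 - 2k_2 = 3g - 3 - 2(d-2g+4) = 7g - 11 - 2d$ shows this is negative precisely when $d > \frac{7g-11}{2}$, so in that range we obtain a genuinely superabundant (indeed, beyond-expected-dimension) non-emptiness statement.

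The main obstacle I anticipate is the verification in the first step that the section-count $h^0(\mathcal{F}_u) \ge k_2$ survives after fixing the determinant. The estimate $h^0(\mathcal{F}_u) \ge k_2$ in Theorem \ref{thm:main2}(ii) comes from the generic behavior over the \emph{whole} family $B_{\rm sup}$, where $N$ is allowed to vary; restricting to a single fiber of $\det$ fixes $N$ and one must ensure that the long exact cohomology sequence of \eqref{exactB1} still delivers at least $k_2$ sections for the general extension class $u$ lying over a general $M$. The cleanest route is to note that the sections of $\mathcal{F}_u$ contributed by $\omega_C \otimes A^\vee$ (which has $h^0 = g - \nu + 1$ and carries the $g^1_\nu$) are independent of the choice of $N$, and to track that these, together with whatever sections the extension forces, meet the threshold $k_2 = d - 2g + 4$; this is where a short direct cohomological computation, rather than an appeal to genericity of $N$, will be required.
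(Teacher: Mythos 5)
Your proposal is correct and follows essentially the same route as the paper: the determinant map $B_{\rm sup}\to{\rm Pic}^d(C)$ is dominant because $\det\mathcal F\cong K_C-A+N$ with $N$ general, so the fiber over a general $M$ is nonempty (and of positive dimension $5g-6-2\nu-d$). The obstacle you anticipate in your last paragraph is in fact a non-issue: general $M$ forces $N\cong M\otimes A\otimes\omega_C^{\vee}$ to be general, hence non-special, and then Lemma \ref{lem:i=2.2}(b) gives $h^1(\mathcal F_u)=2$, i.e.\ $h^0(\mathcal F_u)=k_2$, for \emph{every} extension class $u$, while stability of the general $u$ is Lemma \ref{lem:i=2.2}(a).
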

\begin{proof} Take $\mathcal F \in B_{\rm sup}$ general, as in Theorem \ref{thm:main2} (ii). From \eqref{exactB1}, one has $$\det(\mathcal F) \cong K_C-A+N,$$which is general in 
${\rm Pic}^d(C)$, since $N \in {\rm Pic}^{d-2g+2+\nu}(C)$ is general by assumption in Theorem \ref{thm:main2} (ii). Therefore, the determinantal map  
$$B_{\rm sup} \stackrel{det}{\longrightarrow} {\rm Pic}^d(C)$$is dominant.  Thus $B_M^{k_2}(C)$ contains $B_{\rm sup} \cap U_C(2,M)$, where $U_C(2,M)$ the moduli space of semistable vector bundles of determinant $M$ 
(i.e. the fiber in $U_C(2,d)$ over $M$ via the map $det$). 

Notice that $\dim\;B_{\rm sup} \cap U_C(2,M) = 5g-6-2\nu-d > 0$, since $M= K_C - A + N \cong M':= K_C - A + N'$ if and only if $N \cong N'$ (see Remark \ref{rem:fixeddeterminant} for the classification of $B_{\rm sup} 
\cap U_C(2,M)$). 
\end{proof}

\begin{remark} \label{rem:fixeddeterminant} {\normalfont (i) From the construction of $B_{\rm sup}$ conducted in \S\;\ref{ss:superabundant}, it will be clear that \linebreak $B_{\rm sup} \cap U_C(2,M)$ is birational to $\mathbb{P}(\ext^1(K_C-A, N))$, where $M \cong K_C-A+N \in {\rm Pic}^d(C)$ general.

\noindent
(ii) Differently to $B_{\rm sup}$, the component $B_{\rm reg}$ cannot dominate ${\rm Pic}^d(C)$ for $d >\frac{7g-11}{2}$; indeed, in such a case $$\dim\;B_{\rm reg} = 8g-2d-11 < g = \dim \; {\rm Pic}^d(C).$$
Finally, by Theorem \ref{TeixidorRes}, if  $C$ is with general moduli and $M \in {\rm Pic}^d(C)$ is general, with $d >  \frac{7g-11}{2}$ then $B_M^{k_2}(C) = \emptyset$. In view of the {\em residual correspondence} as in Remark \ref{rem:TeixRes}(ii), this also implies that for 
$d < \frac{g+3}{2}$ then $B_M^2(C) = \emptyset$ for $M \in  {\rm Pic}^d(C)$ and $C$ general. This extends to even degrees $d$ (and via completely different methods) what found in \cite[Example\;6.2]{LNS}.
}
\end{remark}

\subsection{Components via extensions}\label{ss:extension} To prove Theorem \ref{thm:main2} we make use of Theorem \ref{CF} from which we know that, for any possible irreducible component of $B_d^{k_2}$, 
its general bundle $\mathcal F$ arises as an extension \eqref{degree}, with $h^1(L)>0$.  The following preliminary result in particular restricts the possibilities for $h^1(L)$. 

\begin{lemma}\label{speciality3} There is no irreducible component of $B_d^{k_2}$ whose general member $\mathcal F$ is of speciality $i := h^1(\mathcal F) \geq 3$. 
\end{lemma}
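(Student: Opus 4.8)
The plan is to reduce the statement to a single dimension estimate and then to establish that estimate by an extension--theoretic parameter count. Recall first the standard fact that every irreducible component of $B^{k_2}_d\cap U^s_C(2,d)$ has dimension at least the expected value $\rho^{k_2}_d=8g-2d-11$, since the locus is cut out locally on $U^s_C(2,d)$ by the expected number of determinantal conditions. On the other hand, by upper semicontinuity of $h^1$, a component $\mathcal Z$ whose general member $\mathcal F$ satisfies $i:=h^1(\mathcal F)\ge 3$ is entirely contained in the closed subset $Z_3:=\{\mathcal F\in B^{k_2}_d\cap U^s_C(2,d)\mid h^1(\mathcal F)\ge 3\}$. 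Hence it suffices to prove the strict inequality $\dim Z_3<\rho^{k_2}_d$: this rules out the possibility that $Z_3$ contains a component of $B^{k_2}_d\cap U^s_C(2,d)$, which is precisely the assertion.

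To bound $\dim Z_3$ I would invoke Theorem \ref{CF}: any $\mathcal F\in Z_3$ fits into an extension $0\to N\to\mathcal F\to L\to 0$ with $L$ special and effective, and I may take $L$ of minimal degree $\delta$ among the special quotient line bundles, so that semistability forces $\deg N\le d/2\le \delta\le 2g-2$. I would then stratify $Z_3$ by the discrete invariants $\delta$, $a:=h^0(L)$ and $b:=h^1(N)$, bounding the dimension of each stratum by $\dim W^{a-1}_\delta+\dim V_b+\dim\{[u]\}-f$, where $V_b\subseteq\pic^{d-\delta}(C)$ is the locus with $h^1(N)\ge b$, the term $\dim\{[u]\}$ records the extension classes in $\mathbb P(\ext^1(L,N))$ yielding $h^1(\mathcal F_u)\ge 3$, and $f$ is the dimension of the family of maximal sub--line bundles of the general $\mathcal F$ in the stratum. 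Since $h^1(\mathcal F_u)=h^1(L)+b-\rank(\p_u)$, where $\p_u\colon H^0(L)\to H^1(N)$ is the connecting map, the locus $\{[u]\}$ is the determinantal subvariety $\{\rank\p_u\le\rho_0\}$ with $\rho_0:=h^1(L)+b-3$, of expected codimension $(a-\rho_0)(b-\rho_0)$ in $\ext^1(L,N)$.

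The heart of the matter, and the step I expect to be hardest, is to show that this sum stays strictly below $\rho^{k_2}_d$ for every admissible stratum in the range \eqref{eq:ourbounds}. Two features make this delicate. First, on a general $\nu$--gonal (as opposed to a Brill--Noether general) curve the loci $W^{a-1}_\delta$ and $V_b$ may exceed the naive Brill--Noether number precisely because of the pencil $g^1_\nu$, so this surplus has to be reabsorbed by the codimension of the rank condition on $\p_u$; to make the latter count honest I would need $\{\rank\p_u\le\rho_0\}$ to meet $\ext^1(L,N)$ in the expected codimension, which I would extract from a base--point--free pencil argument for the cup--product map $u\mapsto\p_u$ together with the gonality bound (no $g^1_n$ for $n<\nu$). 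Second, and decisively, semistability must be enforced at every stage: a count that drops it fails, since extensions with $L=\omega_C\otimes(2A)^{\vee}$, $|A|=g^1_\nu$, would otherwise produce speciality--$3$ families of too large a dimension, whereas these are in fact destabilized by $\deg N>d/2$. I would control semistability through Theorem \ref{LN}, which identifies the semistable classes with the complement of a secant variety $\Sec_h(X)$ of $X=\varphi_{|K_C+L-N|}(C)$; removing this secant locus contributes the extra codimension that, combined with the bounds above, pushes the dimension of each stratum of $Z_3$ below $\rho^{k_2}_d$.
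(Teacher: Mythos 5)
Your reduction — ``every component of $B^{k_2}_d$ has dimension at least $\rho^{k_2}_d$, so it suffices to prove $\dim Z_3<\rho^{k_2}_d$'' — is logically sound as a sufficient condition, but the inequality you propose to prove is actually \emph{false} in part of the admissible range \eqref{eq:ourbounds}, so the strategy cannot be completed. Concretely, on a general $\nu$--gonal curve one has $h^0(2A)=3$ (cf.\ \eqref{eq:h12A}), hence $L:=\omega_C\otimes(2A)^{\vee}$ has $h^1(L)=3$ and degree $2g-2-2\nu$. Take $N\in\pic^{d-2g+2+2\nu}(C)$ general; then $N$ is nonspecial, so \emph{every} extension $0\to N\to\mathcal F_u\to L\to 0$ has $h^1(\mathcal F_u)=h^1(L)=3$, and for $d\le 4g-4-4\nu$ Theorem \ref{LN} shows the general such $\mathcal F_u$ is stable with $s(\mathcal F_u)=4g-4-4\nu-d$ (note these are \emph{not} destabilized, contrary to what you suggest at the end). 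Since $L-N$ is then a general line bundle of degree $s(\mathcal F_u)<g$, it has no sections, the subbundle $N$ is rigid, and the map $(N,u)\mapsto\mathcal F_u$ is generically finite; the image therefore has dimension $g+\dim\mathbb P(\ext^1(L,N))=6g-6-4\nu-d$. For $d=3g-5$ and $g>4\nu$ (e.g.\ $\nu=3$, $g=20$, $d=55$) this equals $3g-1-4\nu>2g-1=\rho^{k_2}_{3g-5}$, so $\dim Z_3>\rho^{k_2}_d$. The lemma is still true — this family lies in the closure of the speciality--$2$ locus — but no dimension count on $Z_3$ can detect that.

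The paper's proof uses an entirely different, dimension--free mechanism, and this is the idea your plan is missing: if $h^1(\mathcal F)=i\ge 3$ then $h^0(\mathcal F)=k_i>k_2$, so $\mathcal F\in{\rm Sing}(B^{k_2}_d)$, and by Laumon's lemma \cite[Lemme 2.6]{L} (proved via elementary transformations of vector bundles) no component of $B^{k_2}_d$ can be entirely contained in $B^{k_i}_d$ for $i\ge 3$ — i.e.\ any bundle of speciality $\ge 3$ deforms \emph{inside} $B^{k_2}_d$ to one of lower speciality, whatever the dimension of the high--speciality locus. Two further remarks on your sketch, independent of the counterexample: the determinantal bound on $\{\rank\,\p_u\le\rho_0\}$ controls its codimension only from \emph{above}, so it gives a lower bound on the dimension of each stratum rather than the upper bound you need (you acknowledge this but supply no substitute, and the paper's Theorem \ref{CF5.8} treats only corank $1$); and the decisive inequality is explicitly deferred (``the step I expect to be hardest''), so even absent the counterexample the proposal would remain a plan rather than a proof.
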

\begin{proof} If $\mathcal F \in B_d^{k_2}$ is such that $h^1(\mathcal F) = i \ge 3$, 
then by the Riemann-Roch theorem $h^0(\mathcal F) = d - 2g + 2 + i = k_2 +(i-2) = k_i > k_2$. Thus $\mathcal F \in {\rm Sing} (B_d^{k_2})$ (cf. \cite[p.\;189]{ACGH}). Therefore the statement follows from 
\cite[Lemme 2.6]{L}, from which one deduces that no component 
of  $B_d^{k_2}$ can be entirely contained or coincide with a component of $B_d^{k_i}$, for any $i \geq 3$ (the proof is identical to that in \cite[pp.101-102]{L} for $B^0_d$, $1 \leq d \leq 2g-2$, which uses elementary transformations of vector bundles). 
\end{proof}

>From the previous lemma, a general element $\mathcal F$ of any possible component of $B_d^{k_2} $ is therefore presented via an extension \eqref{degree} for which either 
$$ h^1(L) =1 \ \mbox{ or } \  h^1(L) =2.$$The proof of Theorem \ref{thm:main2} splits into the following two subsections\;\S\;\ref{ss:regular} and \ref{ss:superabundant}, respectively dealing with the case 
$h^1(L) =1$ and $h^1(L) =2$. We will show in particular that the case $h^1(L) =1$ (resp., $h^1(L) =2$) produces only the component $B_{\rm reg}$ (resp., $B_{\rm sup}$) as in Theorem \ref{thm:main2}.


 \subsection{The regular component $B_{\rm reg}$} \label{ss:regular} In this subsection we prove that there exists only one component of $B_d^{k_2} \color{black} \cap U_C^s(2,d) \color{black}$, whose general bundle $\mathcal F$ is presented via an extension 
\eqref{degree} with $h^1(L) =1$ and this component is exactly $B_{\rm reg}$ as in Theorem \ref{thm:main2} $(i)$.

To do this recall first that, for any exact sequence $(u)$ as in \eqref{degree},  if one sets$$\partial_u : H^0(L) \to H^1(N)$$ 
the corresponding coboundary map then,  for any integer $t>0$, the locus 
\begin{equation}\label{W1}
\mathcal W_t:=\{u\in\ext^1(L,N)\ |\ {\rm corank} (\partial_u)\ge t\}\subseteq \ext^1(L,N), 
\end{equation}has a natural structure of determinantal scheme (cf. \cite[\S\,5.2]{CF}).  Observe further that, by \eqref{eq:ourbounds},  the Brill-Noether locus $W^0_{4g-5-d}$ on\;$C$ is not empty, irreducible and $h^0(D) =1$ for  general 
$D \in W^0_{4g-5-d}$, \color{black} where $\deg\;D = 4g-5-d \leq g$.\color{black} 


\begin{lemma}\label{lem:i=1.2} Let $D \in W^0_{4g-5-d}$ and  $p\in C$ be general and let 
$\mathcal W_1 \subseteq \ext^1(K_C-p,K_C-D)$ be as in \eqref{W1}. Then, $\mathcal W_1$ is a sub-vector space of $\ext^1(K_C-p,K_C-D)$ of dimension $4g-6-d$. 
Moreover, for $u\in \mathcal W_1$ general, the corresponding rank $2$ vector bundle $\mathcal F_u$ is stable, with:
\begin{enumerate}
\item[(a)] $h^1(\mathcal F_u)=2$;
\item[(b)] $s(\mathcal F) \ge 1$ (resp., $2$)  if $d$ is odd (resp., even).   
\end{enumerate} 
\end{lemma}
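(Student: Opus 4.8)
The plan is to exploit that, with $L=K_C-p$ and $N=K_C-D$, the target $H^1(N)$ of the coboundary is one--dimensional, so that the determinantal locus $\mathcal W_1$ collapses to a linear one. Indeed $h^1(N)=h^0(D)=1$ by the genericity of $D\in W^0_{4g-5-d}$, while $h^1(L)=h^0(p)=1$ and $h^0(L)=g-1$. Because the coboundary $\partial_u\colon H^0(L)\to H^1(N)$ is cup product with $u$, it depends linearly on $u$; and since $\dim H^1(N)=1$, the condition ${\rm corank}(\partial_u)\ge 1$ in \eqref{W1} is exactly $\partial_u=0$. Hence $\mathcal W_1=\ker\big(\ext^1(L,N)\to{\rm Hom}(H^0(L),H^1(N)),\ u\mapsto\partial_u\big)$ is a linear subspace.

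To compute $\dim\mathcal W_1$ I would dualize. By compatibility of cup product with Serre duality, the pairing $H^0(L)\otimes\ext^1(L,N)\to H^1(N)$ is dual to the multiplication map $m\colon H^0(K_C-p)\otimes H^0(D)\to H^0(K_C-p+D)$, so that $\mathcal W_1=({\rm Im}\,m)^{\perp}$ inside $\ext^1(L,N)\cong H^0(K_C-p+D)^{*}$. As $h^0(D)=1$, multiplication by a generator of $H^0(D)$ is injective, whence ${\rm rank}\,m=h^0(K_C-p)=g-1$. Together with $\dim\ext^1(L,N)=2\delta-d+g-1=5g-7-d$ (from \eqref{eq:m}, with $\delta=\deg L=2g-3$, noting $L\not\cong N$ in the range \eqref{eq:ourbounds}), this gives $\dim\mathcal W_1=(5g-7-d)-(g-1)=4g-6-d$.

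Part (a) is then immediate: for every $u\in\mathcal W_1$ one has $\partial_u=0$, so the long exact sequence of \eqref{degree} yields $h^1(\mathcal F_u)=h^1(N)+h^1(L)=2$ (and likewise $h^0(\mathcal F_u)=h^0(N)+h^0(L)=k_2$, confirming $\mathcal F_u\in B^{k_2}_d$). For part (b) I would apply Theorem \ref{LN}: here $2\delta-d=4g-6-d\ge 2$ by \eqref{eq:ourbounds}, so $\varphi:=\varphi_{|K_C-p+D|}$ is a morphism with nondegenerate image $X\subset\mathbb P=\mathbb P(\ext^1(L,N))$ (the image of a complete linear system spans), and $\dim\mathbb P=5g-8-d$. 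Choosing $s=1$ for $d$ odd and $s=2$ for $d$ even --- both satisfying $4+d-2\delta\le s\le 2\delta-d$ and $s\equiv 2\delta-d\pmod 2$ in the range \eqref{eq:ourbounds} --- Theorem \ref{LN} reduces the sought bound $s(\mathcal F_u)\ge s$ (which also forces stability, as $s\ge 1$) to $[(u)]\notin\Sec_h(X)$, with $h=\tfrac12(2\delta-d+s-2)$. It thus suffices to prove that the general point of the linear space $\mathbb P(\mathcal W_1)$, of dimension $4g-7-d$, lies off $\Sec_h(X)$, i.e. that $\mathbb P(\mathcal W_1)\not\subseteq\Sec_h(X)$.

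I expect this last containment to be the crux, with the two parities behaving differently. For $d$ odd one has $h=\tfrac{4g-7-d}{2}$, hence $\dim\Sec_h(X)=\min\{\dim\mathbb P,2h-1\}=4g-8-d<\dim\mathbb P(\mathcal W_1)$, so non--containment is forced by dimension. For $d$ even one has $h=\tfrac{4g-6-d}{2}$ and $\dim\Sec_h(X)=2h-1=4g-7-d=\dim\mathbb P(\mathcal W_1)$, so a dimension count alone is inconclusive; this is the main obstacle. To settle it I would argue by irreducibility: $\Sec_h(X)$ is irreducible, so $\mathbb P(\mathcal W_1)\subseteq\Sec_h(X)$ together with equality of dimensions would force $\mathbb P(\mathcal W_1)=\Sec_h(X)$, making $\Sec_h(X)$ a proper linear subspace of $\mathbb P$ containing the nondegenerate curve $X$, which is absurd. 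Hence $\mathbb P(\mathcal W_1)\not\subseteq\Sec_h(X)$ in all cases, so $\Sec_h(X)\cap\mathbb P(\mathcal W_1)$ is a proper closed subset and the general $u\in\mathcal W_1$ produces a stable $\mathcal F_u$ with $s(\mathcal F_u)\ge s$, proving (b).
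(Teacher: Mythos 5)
Your proof is correct and follows essentially the same route as the paper's: you identify $\mathcal W_1$ as the linear condition $\partial_u=0$ (since $h^1(K_C-D)=1$), compute its codimension as $g-1$ via the injectivity of the dual multiplication map $H^0(K_C-p)\otimes H^0(D)\to H^0(K_C+D-p)$ forced by $h^0(D)=1$, and then apply Theorem \ref{LN} with $s=1$ or $s=2$ according to parity, resolving the even case exactly as the paper does by noting that the linear space $\mathbb P(\mathcal W_1)$ cannot coincide with the non-degenerate secant variety of equal dimension.
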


\begin{proof} This is a simplified and more general version of \cite[Proof of Lemma\;3.7]{CFK}. First we prove the assertions on $\mathcal W_1$; from the assumptions we have: 
\begin{equation}\label{degree0}
\begin{CD}
&(u)& : 0\to &K_C-D&\to &\ \ \mathcal F\ \ & \to  \ &\ \ K_C-p\ \ &\to 0\\
&\deg&       &d-2g+3&& d&& 2g-3&\\
&h^0&       &d-3g+5&&  && g-1&\\
&h^1&       &1&&  && 1&.
\end{CD}
\end{equation} Notice that $\mathcal W_1 \neq \emptyset$, as 
$(K_C-p) \oplus (K_C-D) \in \mathcal W_1$, and that $u \in \mathcal W_1$ if and only if 
$\partial_u = 0$, since $h^1(K_C-D) = 1$. Recalling  that $\partial_u$ is induced by the cup-product 
$$\cup: H^0(K_C-p) \otimes H^1(p-D) \to H^1(K_C-D),$$we have the natural induced morphism 
\[
\begin{array}{ccc}
H^1(p-D) & \stackrel{\Phi}{\longrightarrow} & {\rm Hom} \left(H^0(K_C-p),\; H^1(K_C-D)\right) \\
u & \longrightarrow & \partial_u
\end{array}
\]which shows that 
{\small
$${\mathcal W}_1 = \ker \left(H^1(p-D) \stackrel{\Phi}{\to} H^0(K_C-p)^{\vee} \otimes H^1(K_C-D)\right) 
\cong \left(\coker \left(H^0(K_C+D-p) \stackrel{\Phi^{\vee}}{\leftarrow} H^0(K_C-p) \otimes H^0(D)\right)\right)^{\vee}.$$
}Therefore $\mathcal W_1 = \left(\im \; \Phi^{\vee}\right)^{\perp}$ is a sub-vector space of $\ext^1(K_C-p,K_C-D)$. \color{black}  Since $h^0(D) =1$,   the morphism $\Phi^{\vee}$ is injective hence 
$\mathcal W_1$ is of codimension $(g-1)$. \color{black}  From \eqref{degree0} and definition of $\mathcal W_1$, any $u \in \mathcal W_1$ gives $h^1(\mathcal F_u) =2$, which 
in particular proves (a).

To show that, for $u \in \mathcal W_1$ general,  the bundle $\mathcal F_u$ satisfies also (b) we follow a similar strategy as in the proof of \cite[Lemma\;3.7]{CFK}. Precisely, we consider the 
linear subspace $$\widehat{\mathcal W}_1 := \mathbb{P} (\mathcal W_1) \subset\mathbb P:= \mathbb{P}\left(\ext^1(K_C-p, K_C-D)\right)$$which has dimension $4g-7-d$.

Consider the natural morphism $ C\stackrel{\varphi}{\longrightarrow} X\subset\mathbb P$, given by the complete linear system 
$|K_C+D-p|$, and take $X = \varphi\left(C\right)$. Posing $\delta := 2g-3$ and considering \eqref{eq:ourbounds}, one has $2\delta - d \geq 2 \nu -1 \geq 5$; thus 
one can apply  Theorem \ref{LN}. Taking therefore $s$ any integer such that  $ s\equiv 2\delta-d\  {\rm{ (mod \;2) \ \ and  }} $ 
$ 0 \le s\le 2\delta - d$ one has $$\dim \Sec_{\frac{1}{2}(2\delta-d+s-2)}(X) =2\delta - d + s - 3 = 4g - 9 - d + s \le 4g-7-d = \dim \widehat{\mathcal W}_1$$if and only if $s\le 2$, 
where the equality holds if and only if $s=2$. 

Thus, for $d$ odd,  $s(\mathcal F_u) \geq 1$ for $u \in \mathcal W_1$ general by Theorem \ref{LN}. For $d$ even (case in which $s$ has to be taken necessarily equal to $2$ and $\dim \widehat{\mathcal W}_1 = \dim \Sec_{\frac{1}{2}(4g-6-d)}(X) = 4g-7-d$) 
one has $\widehat{\mathcal W}_1 \neq  \Sec_{\frac{1}{2}(4g-6-d)}(X)$ since $\widehat{\mathcal W}_1$ is a linear space whereas $\Sec_{\frac{1}{2}(4g-6-d)}(X)$ is non-degenerate as $X \subset \mathbb P$ is not;  thus, by Theorem \ref{LN}, 
in this case for general $u \in \widehat{\mathcal W}_1$ one has $s(\mathcal F_u ) \geq 2$.  

In any case  $\mathcal F_u$ general is stable and satisfies (b). 
\end{proof}

To construct the locus $B_{\rm reg}$ and to show that it is actually a component of $B_d^{k_2}$ as in Theorem \ref{thm:main2} (i), notice first that as in \cite[Sect.\;3.2]{CFK} 
one has a natural projective bundle $\mathbb P(\mathcal E_{d})\to S$, where \linebreak $S \subseteq W^0_{4g-5-d}\times C$ is a suitable open dense subset; namely, 
$\mathbb P(\mathcal E_{d})$ is the family of \linebreak $\mathbb P(\ext^1(K_C-p,K_C-D))$'s  as  $(D, p) \in S$ varies. Since, for any such $(D, p) \in S$, 
$\widehat{\mathcal W}_1$ is a linear space of (constant) dimension $4g-7-d$, one has an irreducible projective variety 
$$\widehat{\mathcal W}_1^{Tot}:= \left\{ (D,p,u) \in \mathbb P(\mathcal E_{d}) \; | \; H^0(K_C-p) \stackrel{\partial_u = 0}{\longrightarrow} H^1(K_C-D)\right\},$$which is ruled over $S$, of 
dimension 
$$\dim \widehat{\mathcal W}_1^{Tot} = \dim S + 4g - 7 - d = 4g - d - 4 + 4g - 7 - d = 8 g - 2d - 11 = \rho_d^{k_2}.$$
From Lemma \ref{lem:i=1.2}, one has the natural (rational) map
 \[
 \begin{array}{ccc}
 \widehat{\mathcal W}_1^{Tot}& \stackrel{\pi}{\dashrightarrow} & U^s_C(2,d) \\
 (D,p, u) & \longrightarrow &\mathcal F_u
 \end{array}
 \] and  $\im(\pi) \subseteq B^{k_2}_d \cap U_C^s(2,d)$.

\begin{proposition}\label{thm:i=1.3} The closure $B_{\rm reg}$ of $\im(\pi)$ in $U_C(2,d)$ is a generically  smooth component of  $B^{k_2}_d \color{black} \cap U^s_C(2,d)$  \color{black} with (expected) dimension $\rho_d^{k_2} = 8g-11-2d$, i.e. $B_{\rm reg}$ is {\em regular}. 
Moreover, $B_{\rm reg}$ is {\em uniruled}, being finitely dominated by $\widehat{\mathcal W}_1^{Tot}$. The general point of $B_{\rm reg}$ arises as in Lemma \ref{lem:i=1.2}. 
\end{proposition}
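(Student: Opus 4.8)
The plan is to deduce all three assertions from two facts: that $\pi$ is generically finite, and that the Petri (cup–product) map of a general $\mathcal F\in\im(\pi)$ is injective. Since $\widehat{\mathcal W}_1^{Tot}$ is irreducible of dimension $\rho^{k_2}_d$ and $\im(\pi)\subseteq B^{k_2}_d\cap U^s_C(2,d)$, generic finiteness will give $\dim B_{\rm reg}=\rho^{k_2}_d$, while Petri injectivity will identify the Zariski tangent space of $B^{k_2}_d$ at a general image point with a space of the very same dimension, forcing $B_{\rm reg}$ to be a generically smooth irreducible component of the expected dimension (i.e. regular). Uniruledness will then follow formally from the ruled structure of $\widehat{\mathcal W}_1^{Tot}$.

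First I would establish generic finiteness of $\pi$ by reconstructing the extension datum $(D,p,[u])$ from $\mathcal F=\mathcal F_u$ up to finitely many choices. By Lemma \ref{lem:i=1.2}(a) a general such $\mathcal F$ has $h^1(\mathcal F)=2$, hence $h^0(\mathcal F)=k_2$, so $\mathcal F$ does not lie in the deeper locus $B^{k_i}_d$ with $i\ge 3$ and its special quotient line bundles are constrained. Using the Segre bound $s(\mathcal F)\ge 1$ (resp. $2$) of Lemma \ref{lem:i=1.2}(b) together with \eqref{eq:neccond}, I would show (this is also the minimality claim of Theorem \ref{thm:main2}(i)) that $K_C-p$ is the special quotient line bundle of minimal degree $2g-3$ and that there are only finitely many of minimal degree; this recovers $p$. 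The determinant then forces $N=\det(\mathcal F)\otimes(K_C-p)^{\vee}=K_C-D$, and since $h^0(D)=1$ the effective divisor $D\in W^0_{4g-5-d}$ is recovered. Finally, the surjection $\mathcal F\to K_C-p$ onto the minimal special quotient being unique up to scalars determines $N=\ker$ together with the class $[u]\in\widehat{\mathcal W}_1\subset\mathbb P(\ext^1(K_C-p,K_C-D))$. Thus $\pi$ is generically finite (indeed generically injective), whence $\dim B_{\rm reg}=\dim\widehat{\mathcal W}_1^{Tot}=\rho^{k_2}_d$.

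The main obstacle is the tangent space computation. At a point $\mathcal F$ with $h^0(\mathcal F)=k_2$, $h^1(\mathcal F)=2$, deformation theory of Brill–Noether loci gives $\dim T_{\mathcal F}B^{k_2}_d = h^1(\mathcal F\otimes\mathcal F^{\vee})-\rank\mu_{\mathcal F}=(4g-3)-\rank\mu_{\mathcal F}$, where $\mu_{\mathcal F}\colon H^0(\mathcal F)\otimes H^0(K_C\otimes\mathcal F^{\vee})\to H^0(K_C\otimes\mathcal F\otimes\mathcal F^{\vee})$ is the Petri cup–product map. Since its source has dimension $k_2\cdot 2=2k_2$, injectivity of $\mu_{\mathcal F}$ is exactly equivalent to $\dim T_{\mathcal F}B^{k_2}_d=4g-3-2k_2=\rho^{k_2}_d$, so the heart of the proof is to prove $\mu_{\mathcal F}$ injective for general $\mathcal F\in\im(\pi)$. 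For this I would dualize \eqref{degree0} and twist by $K_C$ to obtain $0\to\OO_C(p)\to K_C\otimes\mathcal F^{\vee}\to\OO_C(D)\to 0$, identify $H^0(K_C\otimes\mathcal F^{\vee})$ as spanned by the section cutting $p$ and a lift of the section cutting $D$, and then analyse $\mu_{\mathcal F}$ through the two–step filtrations of $\mathcal F$ and of $\mathcal F^{\vee}$, reducing the injectivity to that of the classical multiplication/base–point–free–pencil maps attached to $K_C-p$, $K_C-D$, $\OO_C(p)$ and $\OO_C(D)$; here the generality of $p$, of $D$, and of the $\nu$–gonal curve $C$ would be used to kill the relevant kernels. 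This is the step I expect to be delicate.

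Granting injectivity, since $\dim_{\mathcal F}B^{k_2}_d\le\dim T_{\mathcal F}B^{k_2}_d=\rho^{k_2}_d=\dim B_{\rm reg}$ and $B_{\rm reg}$ is an irreducible subvariety of $B^{k_2}_d$ of dimension $\rho^{k_2}_d$ through the general $\mathcal F$, it follows that $\mathcal F$ is a smooth point of $B^{k_2}_d$ and that the unique component through it, having dimension $\rho^{k_2}_d$ and containing $B_{\rm reg}$, must coincide with $B_{\rm reg}$; hence $B_{\rm reg}$ is a generically smooth component of expected dimension, i.e. regular. Uniruledness is then immediate: $\widehat{\mathcal W}_1^{Tot}$ is ruled over $S$, its fibres being the projective spaces $\widehat{\mathcal W}_1\cong\mathbb P^{4g-7-d}$, and since $\pi$ is generically finite and dominant, the images under $\pi$ of the ruling lines (which are not contracted, $\pi$ being generically finite) are rational curves through a general point of $B_{\rm reg}$, so $B_{\rm reg}$ is uniruled. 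Finally, that the general point of $B_{\rm reg}$ arises as in Lemma \ref{lem:i=1.2} is clear, $\im(\pi)$ being dense in $B_{\rm reg}$.
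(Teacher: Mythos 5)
Your outline reproduces, in expanded form, exactly what the paper's two-line proof delegates to its references: the first assertion is proved ``identically to'' \cite[Proposition 3.9]{CFK} (dimension from generic finiteness of $\pi$ plus generic smoothness from injectivity of the Petri map), and uniruledness comes from the ruled structure of $\widehat{\mathcal W}_1^{Tot}$ together with the generic finiteness of $\pi$ taken from \cite[Lemma 6.2]{CF}. So the route is the same; the only substantive differences are in the two steps you leave as sketches. For the Petri injectivity (the step you rightly flag as delicate), the cited argument does not analyse a general non-split $\mathcal F$ through its filtration: it evaluates at the split bundle $\mathcal F_0=(K_C-D)\oplus(K_C-p)$, which lies in $\mathcal W_1$, where $H^0(\mathcal F_0)\otimes H^0(\omega_C\otimes\mathcal F_0^{\vee})$ splits into the four pieces $H^0(K_C-D)\otimes H^0(D)$, $H^0(K_C-D)\otimes H^0(p)$, $H^0(K_C-p)\otimes H^0(D)$, $H^0(K_C-p)\otimes H^0(p)$, mapping to four \emph{distinct} direct summands of $H^0(\omega_C\otimes\mathcal F_0\otimes\mathcal F_0^{\vee})$; each factor is injective simply because $h^0(D)=h^0(p)=1$, and semicontinuity on $\mathcal W_1$ transfers injectivity to a general $u$. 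This is cleaner than your filtration scheme and needs no base-point-free-pencil input (the same decomposition reappears in the proof of Proposition \ref{prop:Normalreg}). For generic finiteness, note that your reconstruction of $p$ leans on the minimality of $K_C-p$ among special quotients, which the paper only establishes \emph{after} this proposition (Proposition \ref{lem:i=1.1} and Remark \ref{rem:min}); the finiteness of the fibre is obtained in \cite[Lemma 6.2]{CF} without it, from the linear isolation of the section $\Gamma$ corresponding to the quotient $K_C-p$, i.e. $h^0(\mathcal N_{\Gamma/\mathbb P(\mathcal F)})=h^0(D-p)=0$ since $h^0(D)=1$ and $p$ is general. Neither point is a gap in substance, but both would have to be repaired to make your sketch self-contained.
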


\begin{proof} The proof of the first sentence is identical to that in \cite[Proposition\;3.9]{CFK}. The fact that $B_{\rm reg}$ is uniruled follows from the ruledness of $\widehat{\mathcal W}_1^{Tot}$ and the 
\color{black}generic \color{black} finiteness of the map $\pi$  (as it is proved in \cite[Lemma\;6.2]{CF}, cf. also \cite[Proposition\;3.9]{CFK}).
\end{proof}

Next, we show the uniqueness of $B_{\rm reg} $ among possible components of $B_d^{k_2} \color{black} \cap U_C^s(2,d) \color{black}$, whose general bundle $\mathcal F$ is presented via an extension 
\eqref{degree} with $h^1(L) =1$. To do this,  we will make use of the following: 

\begin{theorem}(\cite[Theorem 5.8 and Corollary 5.9]{CF})\label{CF5.8} Let $C$ be a smooth, irreducible, projective curve of genus $g\ge 3$, $L \in \pic^{\delta}(C)$ and $N \in \pic^{d-\delta}(C)$. Set 
$$l := h^0(L), \;\; r:= h^1(N), \;\;  m:=\dim\;\ext^1(L,N).$$Assume that  
$$ r\ge1,\ l\ge\max\{1,r-1\},\ m \ge l+1.$$Then:
\begin{enumerate}
\item[(i)] $\mathcal W_1$ as in \eqref{W1} is irreducible of dimension $m- (l-r+1)$; 
\item[(ii)] if $l \geq r$, then $\mathcal W_1 \subsetneq \ext^1(L,N)$. Moreover for general $u \in  \ext^1(L,N)$,  $\partial_u$ is surjective whereas for general $w \in  \mathcal W_1$,  ${\rm corank} (\partial_w)=1$. 
\end{enumerate}
\end{theorem}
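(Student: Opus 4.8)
The plan is to recast the coboundary map as cup product with the extension class and then realize $\mathcal W_1$ as the image of an auxiliary incidence variety whose dimension is transparent, pinning down $\dim \mathcal W_1$ both from above and from below. First I would recall that, under the identification $\ext^1(L,N)\cong H^1(N-L)$, the coboundary map attached to $(u)$ is cup product with $u$, namely $\partial_u(s)=u\cup s$ for $s\in H^0(L)$; in particular $\partial_u$ depends linearly on $u$. Dualizing $\partial_u$ and using Serre duality $H^1(N)^\vee\cong H^0(K_C-N)$, together with associativity of cup product and the perfect pairing $H^1(K_C-L)\times H^0(L)\to H^1(K_C)\cong\C$, I would identify the cokernel directions as
\begin{equation*}
\coker(\partial_u)^\vee\;\cong\;\{\xi\in H^0(K_C-N)\ :\ \xi\cup u=0\ \text{in}\ H^1(K_C-L)\}.
\end{equation*}
Thus $\corank(\partial_u)$ equals the dimension of this space, and $u\in\mathcal W_1$ exactly when it is nonzero.

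Next I would introduce the incidence variety
\begin{equation*}
\widetilde{\mathcal W}_1:=\{(u,[\xi])\in \ext^1(L,N)\times\P(H^0(K_C-N))\ :\ \xi\cup u=0\}.
\end{equation*}
The second projection sends $\widetilde{\mathcal W}_1$ to $\P(H^0(K_C-N))\cong\P^{r-1}$ (nonempty since $r\ge1$), and the fibre over $[\xi]$ is $\ker\big(\cup\xi\colon H^1(N-L)\to H^1(K_C-L)\big)$. The key point is that $\cup\xi$ is the Serre dual of multiplication $\cdot\,\xi\colon H^0(L)\to H^0(K_C-N+L)$, which is injective for every $\xi\neq0$ because $C$ is integral and $\xi$ is a nonzero section; hence $\cup\xi$ is surjective and the fibre has constant dimension $m-l$ (this is where $l\ge1$ and $m\ge l+1$ enter, the latter guaranteeing positive–dimensional fibres). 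Therefore $\widetilde{\mathcal W}_1$ is a vector bundle over $\P^{r-1}$, in particular irreducible of dimension $(r-1)+(m-l)=m-(l-r+1)$. Since the first projection $\widetilde{\mathcal W}_1\to\ext^1(L,N)$ has image precisely $\mathcal W_1$, the latter is irreducible with $\dim\mathcal W_1\le m-(l-r+1)$. When $l\ge r$ this bound is $<m$, which already yields $\mathcal W_1\subsetneq\ext^1(L,N)$ and the surjectivity of $\partial_u$ for general $u$, proving the first half of (ii).

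To obtain the exact dimension in (i) and the last assertion of (ii), I would pair the above upper bound with a lower bound from determinantal geometry: over the affine space $\ext^1(L,N)$ the linear-in-$u$ maps $\partial_u$ assemble into a morphism of trivial bundles of ranks $l$ and $r$, and $\mathcal W_1$ is its first degeneracy locus, so every component has codimension at most $l-r+1$ (here the hypothesis $l\ge\max\{1,r-1\}$ makes this bound meaningful); hence $\dim\mathcal W_1\ge m-(l-r+1)$. Combining the two bounds gives $\dim\mathcal W_1=m-(l-r+1)=\dim\widetilde{\mathcal W}_1$. Because the surjection $\widetilde{\mathcal W}_1\to\mathcal W_1$ is then a dominant morphism between irreducible varieties of equal dimension, its general fibre $\P(\coker\partial_w)$ is finite, hence a single point, so $\corank(\partial_w)=1$ for general $w\in\mathcal W_1$, completing (ii). The main obstacle I anticipate is precisely the coordination of these two estimates: one must be sure that the incidence fibration has genuinely constant fibre dimension over all of $\P(H^0(K_C-N))$ — which rests on the injectivity of multiplication by an arbitrary nonzero section — and that the classical determinantal lower bound applies to the possibly non-reduced scheme $\mathcal W_1$, so that the two inequalities can be forced to coincide.
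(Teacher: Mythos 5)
The paper does not prove this statement --- it is quoted verbatim from \cite[Theorem 5.8 and Corollary 5.9]{CF} --- so there is no internal proof to compare against; judged on its own, your argument is correct and follows what is essentially the standard (and, in the cited source, the actual) route: realize $\mathcal W_1$ as a degeneracy locus of the linear family $u\mapsto\partial_u$, bound its dimension from above via the incidence correspondence over $\P(H^0(K_C-N))$ whose fibres are $\ker(\cup\,\xi)$, bound it from below by the classical determinantal codimension estimate, and read off the generic corank from the finiteness of the general fibre of $\widetilde{\mathcal W}_1\to\mathcal W_1$. The only two points worth spelling out are that $\widetilde{\mathcal W}_1$ really is a sub--vector--bundle over $\P^{r-1}$ (the fibrewise surjectivity of $\cup\,\xi$, dual to injectivity of multiplication by a nonzero section on the integral curve $C$, gives a constant--rank cokernel and hence a locally free kernel, which is what yields irreducibility, not merely constancy of fibre dimension) and that $\mathcal W_1\neq\emptyset$ (e.g.\ $0\in\mathcal W_1$ since $r\ge 1$) before invoking the determinantal bound --- both are immediate, so the argument stands.
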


\begin{proposition}\label{lem:i=1.1} Let $\mathcal B$ be any component of $B^{k_2}_d $,  
with $\dim \mathcal B \geq \rho_d^{k_2}$.  Assume that a general element
$\mathcal F$ in $\mathcal B$ fits in \eqref{degree}, with $h^1(\mathcal F)=2$ and $h^1(L)=1$. Then, $\mathcal B$ coincides with the component $B_{\rm reg}$ as in Proposition\;\ref{thm:i=1.3}.
\end{proposition}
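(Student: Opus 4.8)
The plan is to show that the hypotheses force the presentation \eqref{degree} of a general $\mathcal F \in \mathcal B$ to be exactly the one of Lemma \ref{lem:i=1.2}, after which $\mathcal B = B_{\rm reg}$ follows by irreducibility and equality of dimensions. First I would read off the cohomology of \eqref{degree}. Writing $\partial_u \colon H^0(L) \to H^1(N)$ for the coboundary and $r := h^1(N)$, the long exact sequence gives $h^1(\mathcal F) = h^1(L) + \corank(\partial_u)$; since $h^1(\mathcal F) = 2$ and $h^1(L) = 1$ this yields $\corank(\partial_u) = 1$, hence $r \geq 1$ (so $N$ is special) and $\rank(\partial_u) = r - 1$. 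Setting $\delta := \deg L$, Riemann--Roch gives $l := h^0(L) = \delta - g + 2$, and $\rank(\partial_u) = r-1 \leq l$ reads $r \leq \delta - g + 3$. Stability forces $\deg N = d - \delta < d/2$, i.e. $2\delta > d$, so $N \not\cong L$ and $m := \dim \ext^1(L,N) = 2\delta - d + g - 1$ by \eqref{eq:m}.

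Next I would bound $\dim \mathcal B$ from above by a parameter count. A general $\mathcal F \in \mathcal B$ determines a triple $(L, N, [u])$ with $u$ in the locus $\mathcal W_1$ of \eqref{W1}, and conversely such triples recover $\mathcal F$. The bundle $L$ varies in $\{L : K_C - L \in W^0_{2g-2-\delta}\}$, of dimension $2g - 2 - \delta$; the bundle $N$ in $\{N : K_C - N \in W^{r-1}_{2g-2-d+\delta}\}$; and $[u] \in \mathbb P(\mathcal W_1)$, where $\dim \mathcal W_1 = m - (l - r + 1) = \delta - d + 2g - 4 + r$ by Theorem \ref{CF5.8}(i). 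Feeding in the Brill--Noether dimension of $W^{r-1}_{2g-2-d+\delta}$ (which on a general $\nu$-gonal curve equals the expected value in the range at issue) and using $\dim \mathcal B \geq \rho_d^{k_2}$, the whole count collapses to the single inequality $r(\delta + g - d - r) \geq 3g - 4 - d$.

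I would then run the elementary case analysis of this inequality. Since \eqref{eq:ourbounds} gives $3g - 2 - d \leq 3$, together with $d/2 < \delta \leq 2g - 2$ and $1 \leq r \leq \delta - g + 3$ it leaves only: $r = 1$ with $\delta \in \{2g-3,\, 2g-2\}$, and, in the boundary degrees $d \in \{3g-5,\, 3g-4\}$, the extra possibility $r = 2$ with $\delta = 2g - 2$. The favourable case $\delta = 2g-3$, $r=1$ is exactly the configuration of Lemma \ref{lem:i=1.2}: then $K_C - L$ is a single point $p$ and $K_C - N$ is a general effective divisor $D$ of degree $4g - 5 - d$ with $h^0(D) = 1$, so the general $\mathcal F \in \mathcal B$ arises precisely as in Lemma \ref{lem:i=1.2} and hence lies in $B_{\rm reg}$; as $\mathcal B$ and $B_{\rm reg}$ are irreducible components of $B_d^{k_2}\cap U_C^s(2,d)$ of the same dimension $\rho_d^{k_2}$, they coincide.

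The main obstacle is disposing of the parasitic cases $\delta = 2g-2$ (i.e. $L = K_C$), which I expect to handle by a fibre-dimension computation through Serre duality: the quotients $\mathcal F \to K_C$ are governed by $\mathrm{Hom}(\mathcal F, K_C) = h^0(\mathcal F^{\vee} \otimes K_C) = h^1(\mathcal F) = 2$, so a general such $\mathcal F$ carries a whole $\mathbb P^1$ of them and the parametrization above acquires $1$-dimensional fibres. For $r = 2$ this drops the image strictly below $\rho_d^{k_2}$, contradicting $\dim \mathcal B \geq \rho_d^{k_2}$ and excluding the case; for $r = 1$, $\delta = 2g-2$ the image has dimension exactly $\rho_d^{k_2}$, and since a general such $\mathcal F$ also admits a quotient $K_C(-p)$ it falls into $B_{\rm reg}$, again forcing $\mathcal B = B_{\rm reg}$. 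The delicate points here are computing this fibre honestly and checking that the bundles with a $K_C$-quotient genuinely belong to $B_{\rm reg}$ rather than forming a separate regular component; I would also need to confirm that the Brill--Noether locus $W^{r-1}_{2g-2-d+\delta}$ does not exceed its expected dimension on the $\nu$-gonal curve in the relevant degrees.
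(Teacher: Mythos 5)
Your overall strategy is the paper's: present a general $\mathcal F\in\mathcal B$ by an extension, bound $\dim\mathcal B$ by counting parameters for $L$, $N$ and $[u]\in\mathbb P(\mathcal W_1)$ via Theorem \ref{CF5.8}, and run a case analysis on $(r,\delta)$ that leaves only $L\cong K_C$ or $L\cong K_C(-p)$. However, there is a genuine gap in the middle step. Your count for the family of $N$'s ``feeds in the Brill--Noether dimension of $W^{r-1}_{2g-2-d+\delta}$, which on a general $\nu$-gonal curve equals the expected value in the range at issue'' --- this is false, and it is exactly the point of working on a gonal curve. For $r\geq 2$ the locus $W^{1}_{n}$ (and more generally $W^{r-1}_n$) contains the gonal component $A+W^0_{n-\nu}$ of dimension $n-\nu$, which exceeds $\rho(g,1,n)=2n-g-2$ throughout most of the relevant range $\nu+1\le n\le g+1$ (e.g.\ $W^1_{\nu}=\{A\}$ is nonempty while $\rho<0$). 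Since you are using this dimension as an \emph{upper} bound to exclude cases, underestimating it makes the exclusion of the cases $r\ge 2$ unjustified: your ``single inequality'' $r(\delta+g-d-r)\ge 3g-4-d$ is not a consequence of $\dim\mathcal B\ge\rho^{k_2}_d$. The paper avoids this by bounding $\dim W^{r-a-2}_{g-2-a}$ with Martens' theorem (valid on any non-hyperelliptic curve) when $\deg(K_C-N)\in\{g,g+1\}$, and by deferring to the gonal-curve-specific estimates of \cite[Prop.~3.13]{CFK} when $\deg(K_C-N)\le g-1$. You flag this issue yourself at the end, but the confirmation you hope for is precisely what fails; the argument needs Martens-type bounds (or the known dimensions of $W^r_n$ on a general $\nu$-gonal curve), not the Brill--Noether number.

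Two smaller points. First, you apply Theorem \ref{CF5.8}(i) without verifying its hypotheses $l\ge\max\{1,r-1\}$ and $m\ge l+1$; the paper devotes a nontrivial part of the proof to checking $l\ge r$ (via Clifford) and $m\ge l+1$ (via \eqref{degn}), and without these the formula $\dim\mathcal W_1=m-(l-r+1)$ is not available. Second, you dismiss $L\cong N$ by invoking stability, but the bundles in $B^{k_2}_d$ are a priori only semistable, so $\deg N=d/2=\delta$ with $N\cong L$ is possible and changes $m$ from $2\delta-d+g-1$ to $g$ by \eqref{eq:m}; the paper treats this as a separate (easily excluded) case. Your fibre-dimension idea for the surviving case $L\cong K_C$ (using $\hom(\mathcal F,K_C)=h^1(\mathcal F)=2$ to produce a pencil of quotients and land in $B_{\rm reg}$) is in the right spirit and is essentially how the cited last part of \cite[Prop.~3.13]{CFK} concludes, but as written it is a sketch rather than a proof.
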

\begin{proof} The strategy of the proof is similar to that of \cite[Prop.\;3.13]{CFK}; the main difference is given by different bounds \eqref{eq:ourbounds}. 

Let $\delta := \deg \;L$; then, $\frac{3g-5}{2} \leq \delta\le 2g-2$, as it follows from the facts that $L$ is special and $\mathcal F$ is semistable  
with $d \geq 3g-5$ from \eqref{eq:ourbounds}.  Hence, \color{black} using \eqref{eq:ourbounds}, \color{black} one has 
\begin{equation}\label{degn}
g -3 \le \deg \; N=d-\delta\le \frac{d}{2} \le 2g - \frac{5}{2} - \nu.
\end{equation} By  \eqref{degree}, $h^1(\mathcal F) =2$ and $h^1(L)=1$, $N$ is therefore special with $r: = h^1(N) \geq 1$ and the corresponding coboundary map 
$\partial$ has to be of corank one.  

Set  $l:=h^0(L)$;  by $h^1(L) =1$ one has $l=\delta-g+2$. 

First we want to show that $l \geq r$; observe indeed that $3d \geq 9g - 15 \geq 8g-10$, where the first inequality follows from \eqref{eq:ourbounds} whereas the latter from $g \geq 2 \nu \geq 6$, always by 
\eqref{eq:ourbounds}. Therefore$$3d \geq 8g -10 \; \Rightarrow \; d \geq 8g - 2d - 10 \; \Rightarrow \frac{d}{2} \geq 4g - d - 5.$$By semistability of $\mathcal F$, the last inequality in particular implies 
$\delta \geq \frac{d}{2} \geq 4g - d - 5$, which is equivalent to 
$l= \delta - g + 2 \geq \frac{2g-1-d+\delta}{2} \geq r$, the last inequality following from  $r-1 \leq \frac{\deg(K_C-N)-1}{2}$ by Clifford's theorem, as $C$ is non--hyperelliptic. 

Now set $m:=\dim \; {\rm Ext}^1(L,N)$; we want to prove that  $m\ge l+1$. From \eqref{eq:m}, one has \linebreak $m\geq g+2\delta-d -1$ so 
it suffices to show that $g+2\delta-d -1 \geq \delta - g + 3$. This is equivalent to $d-\delta \leq 2g-4$, which certainly holds since 
$\deg\;N = d-\delta \leq \frac{d}{2} \leq 2g - \frac{5}{2} - \nu$ as it follows from semistability and from \eqref{eq:ourbounds}. 

To sum--up, since $l \geq r$ and $m \geq l +1$, we are in position to apply Theorem \ref{CF5.8}, from which we get that 
$$\emptyset \neq \mathcal W_1=\{u\in\ext^1(L,N)\ |\ {\rm corank} (\partial_u)\ge1\} \subsetneq {\rm Ext}^1(L,N)$$ is irreducible and $ \dim {\mathcal W}_1 = m-l+r-1 =  m - \delta +g +r - 3$. 
Using the same strategy as above (cf. also the proof of \cite[Prop.\;3.13]{CFK}), for a suitable open dense subset $S \subseteq W^{r-1}_{2g-2 +\delta -d}\times C^{(2g-2-\delta)}$, 
one can construct a projective bundle $\mathbb P(\mathcal E_d)\to S$ and an irreducible subvariety 
$\widehat{\mathcal W}^{Tot}_1 \subsetneq \mathbb P(\mathcal E_d)$, fitting in the diagram:
\[\begin{array}{ccc}
\widehat{\mathcal W}^{Tot}_1 & \stackrel{\pi}{\dasharrow} & \mathcal B \subset B_{d}^{k_2}\\
\downarrow & & \\
S & & 
\end{array}
\]whose general fiber over $S$ is $\widehat{\mathcal W}_1:= \mathbb P(\mathcal W_1)$, which is the projectivization of the affine irreducible variety $\mathcal W_1 \subsetneq {\rm Ext}^1(L,N)$, 
and such that the component $\mathcal B$ has to be the image of $ \widehat{\mathcal W}^{Tot}_1$  via a dominant rational map $\pi$ as above (cf.\;\cite[Sect.\;6]{CF} for details).  
From the given parametric construction of $\mathcal B$, one must have $$ \dim  \mathcal B \leq \dim W^{r-1}_{2g-2-d+\delta}+2g-2-\delta+\dim \widehat{\mathcal W}_1.$$

Observe that, from \eqref{degn}, one has $\deg \; K_C-N \le g+1$. To conclude the proof for 
$\deg \; K_C-N \le g-1$ one can refer to \cite[proof of Prop.\;3.13]{CFK}. Assume therefore $\deg \; K_C-N= g+a$ where $a\in\{0,1\}$; 
thus $\deg\; N = d-\delta = g-2-a$. 

If $r\ge a+2$, then we have $h^0(N) = r-a-1\ge 1$, hence $N\in W^{r-a-2}_{g-2-a} \subsetneq \text{Pic}^{g-2-a}(C)$; otherwise, if 
$r= a+1$, by $\deg\; N = g-2-a$ one deduces that $N\in \text{Pic}^{g-2-a}(C)$ is general.  
Hence we get
\begin{eqnarray*}
\dim\mathcal B  \leq  \begin{cases}
\ \dim \text{Pic}^{g-2-a}(C)+(2g-2-\delta)+ m - \delta +g +r - 4\ &\text{ if } r= a+1 \\
\ \dim W^{r-a-2}_{g-2-a}+(2g-2-\delta)+ m - \delta +g +r - 4\ &\text{ if } r\ge a+2.
\end{cases}
\end{eqnarray*}

Consider the second case $r\ge a+2$; since $r\ge 2$ then $N$ cannot be isomorphic to $L$ which, from \eqref{eq:m}, implies $m=2\delta -d+g-1$.
Hence from above we have
\begin{eqnarray*}
\dim\mathcal B & \leq & \ \dim W^{r-a-2}_{g-2-a}+(2g-2-\delta)+ m - \delta +g +r - 4\ \\
& \leq & (g-2-a)-2(r-a-2)-1 + (2g-2-\delta)+ (2\delta -d+g-1) - \delta +g +r - 4 \\
& = & 5g-6+a-r-d = 6g-2d+\delta-8-r,
\end{eqnarray*}where the second inequality follows from Martens' theorem \cite[Theorem \color{black}\;(5.1)]{ACGH} applied to $N$ whereas 
the last equality comes from $g=d-\delta+2+a$. This gives $ \rho^{k_2}_d =8g - 2d - 11  \leq \dim  \mathcal B \le 6g-2d+\delta-8-r$,  
which cannot occur since $\delta \leq 2g-2$.

Assume now $r= a+1$. If  $L \cong  N$, then $m=g$ by \eqref{eq:m}, so  
\begin{eqnarray*}
\dim \; \mathcal B  \leq g+(2g-2-\delta)+ m - \delta +g +r - 4 = 5g - 2 \delta - 5 + a = 6g-d-\delta-7 ,
\end{eqnarray*}where the last equality follows from $g= d - \delta + 2 + a$. Therefore, from 
$\rho_d^{k_2} \leq \dim \;\mathcal B  \leq  6g-d-\delta-7$ one would have $\deg \; N = d - \delta \geq 2g-4$ which is a contradiction from \eqref{degn}.

If otherwise $L\ncong  N$, then 
\begin{eqnarray*}
\dim\mathcal B  \leq g+(2g-2-\delta)+ m - \delta +g +r - 4=6g-2d+\delta-8,
\end{eqnarray*}where the last equality follows from \eqref{eq:m} and $g= d - \delta + 2 + a$.    As above,  from 
$\rho_d^{k_2} \leq \dim \;\mathcal B  \leq  6g-2d +\delta-8$, one gets  $\delta \geq 2g-3$ 
which implies that either $L\cong K_C$ or $L\cong K_C(-p)$, for some $p\in C$. Then one concludes as in the last part of the proof of \cite[Prop.\;3.13]{CFK}
\end{proof}

\begin{remark}\label{rem:min} {\normalfont The proof of Proposition \ref{lem:i=1.1} shows that $K_C-p$ is minimal among special quotient line bundles for 
$\mathcal F$ general in $B_{\rm reg}$, completely proving Theorem \ref{thm:main2} (i). Note moreover that \eqref{exactB0} implies that $\mathcal F$ general in  $B_{\rm reg}$ admits also a {\em presentation} via a canonical quotient, i.e. it fits in  $0 \to K_C-D-p \to \mathcal F \to K_C\to 0 $,  whose residual presentation coincides with that in the proof of \cite[Theorem]{Teixidor1}. 
In other words, the component  $B_{\rm reg}$ coincides with the component $\mathcal B$ in  \cite[Theorem]{Teixidor1}; this is the only component when $C$ is with general moduli. 
}
\end{remark}

\subsection{The superabundant component $B_{\rm sup}$}\label{ss:superabundant}

To finish the proof of Theorem \ref{thm:main2}, it remains to study possible components $\mathcal B$ for which $\mathcal F\in \mathcal B $ general is such that $h^1(\mathcal F) =h^1(L)=2$, with $\mathcal F$ fitting in a suitable exact sequence as 
in \eqref{degree}. To do this, we first need the following:

\begin{lemma}\label{lem:i=2.2} Let $\mathcal F$ be a rank 2 vector bundle arising as a general extension in ${\rm Ext}^1(K_C-A, N)$, where $N$ is any line bundle in ${\rm Pic}^{d-2g+2 + \nu}(C)$, with $d$ and $\nu$ as in \eqref{eq:ourbounds}. 
Then: 
\begin{enumerate}
\item[(a)]  $\mathcal F$ is stable with $s(\mathcal F)= 4g-4-2\nu-d$, i.e. $K_C-A$ is a minimal quotient of  $\mathcal F$; 
\item[(b)] If moreover $N$ is non special, then $h^1(\mathcal F) = h^1(K_C-A) = 2$. 
\end{enumerate} 
\end{lemma}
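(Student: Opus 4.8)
The plan is to read the Segre invariant off the presentation \eqref{degree} and pin down its exact value by means of Theorem \ref{LN}, disposing of (b) first by a direct cohomology computation. Throughout I set $L := K_C - A$, so that $\delta := \deg\;L = 2g-2-\nu$ and $\deg\;N = d-2g+2+\nu$; in particular $2\delta - d = 4g-4-2\nu-d$, which is precisely the value claimed for $s(\mathcal F)$ in (a). Since $d \leq 4g-5-2\nu$ by \eqref{eq:ourbounds}, one has $2\delta - d \geq 1 > 0$, so the target Segre invariant is positive and the asserted stability is consistent. For (b) I would argue that when $N$ is non-special, $h^1(N)=0$, so the long exact cohomology sequence of \eqref{degree} yields $H^1(\mathcal F) \cong H^1(L) = H^1(K_C - A)$; Serre duality then gives $h^1(\mathcal F) = h^0(A) = 2$, the last equality since $|A| = g^1_\nu$.

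For (a) the upper bound $s(\mathcal F) \leq 2\delta - d$ is automatic: $N \subset \mathcal F$ is a sub-line bundle of degree $d-\delta$, so the maximal sub-line bundle $N_{\max}$ has $\deg\;N_{\max} \geq d-\delta$ and $s(\mathcal F) = d - 2\deg\;N_{\max} \leq 2\delta - d$. The content is the reverse inequality for general $u$, for which I would invoke Theorem \ref{LN} with $s := 2\delta - d$. First note $L \ncong N$, their degrees agreeing only for the excluded value $d = 4g-4-2\nu$, so \eqref{eq:m} gives $\dim\;\ext^1(L,N) = 2\delta - d + g - 1$ and $\dim\;\mathbb{P} = 2\delta - d + g - 2$. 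Assuming $2\delta - d \geq 2$, i.e. $d \leq 4g-6-2\nu$, Theorem \ref{LN} applies: the chosen $s$ meets the parity requirement and the bounds $4+d-2\delta \leq s \leq 2\delta - d$, and $s(\mathcal F_u) \geq 2\delta - d$ for general $u$ exactly when $\Sec_{(2\delta-d)-1}(X) \subsetneq \mathbb{P}$ for $X = \varphi_{|K_C+L-N|}(C)$. This is the inequality $2(2\delta-d)-3 < 2\delta - d + g - 2$, which simplifies to $2\delta - d \leq g$, i.e. $d \geq 3g-4-2\nu$; it holds because $d \geq 3g-5$ by \eqref{eq:ourbounds}. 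Hence $s(\mathcal F) = 2\delta - d$, which forces $N$ to be maximal and $L = K_C - A$ a quotient of minimal degree, and $2\delta - d > 0$ gives stability.

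The main obstacle is the boundary value $d = 4g-5-2\nu$, where $2\delta - d = 1$ and Theorem \ref{LN} is unavailable, its range $4+d-2\delta \leq s \leq 2\delta - d$ being empty. Here I would argue directly, exploiting that $d$ is odd: if general $\mathcal F_u$ were not stable it would carry a sub-line bundle $M$ with $\deg\;M \geq \tfrac{d+1}{2} = \delta$. Composing $M \hookrightarrow \mathcal F \to L$, if the composite vanishes then $M \hookrightarrow N$ and $\deg\;M \leq \deg\;N = \tfrac{d-1}{2} < \delta$, a contradiction; if it is nonzero it is injective, so $\deg\;M \leq \deg\;L = \delta$, whence $\deg\;M = \delta$ and the composite is an isomorphism, splitting \eqref{degree}. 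Since general $u$ is non-split, neither alternative can occur, so the general $\mathcal F_u$ is stable; together with $s(\mathcal F) \leq 2\delta - d = 1$ this gives $s(\mathcal F) = 1 = 2\delta - d$, again exhibiting $K_C - A$ as a minimal quotient. The delicate points to check carefully are thus the secant-dimension estimate and, above all, this separate boundary analysis, which is exactly the regime where the weakened hypotheses \eqref{eq:ourbounds} go beyond those of \cite{CFK}.
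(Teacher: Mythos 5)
Your proposal is correct and follows essentially the same route as the paper: part (b) by the long exact sequence, part (a) via Theorem \ref{LN} with $s=2\delta-d$ in the range $2\delta-d\ge 2$ (the secant-dimension inequality you check is the same one, verified via $d\ge 3g-5$ rather than via $2\delta-d\le g+1-2\nu$), and the boundary case $d=4g-5-2\nu$ by the identical ad hoc degree/splitting argument on a destabilizing sub-line bundle $M$. The only cosmetic difference is that you make the automatic upper bound $s(\mathcal F)\le 2\delta-d$ coming from the subbundle $N$ explicit, which the paper leaves implicit.
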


\begin{proof} (b) is a trivial consequence of the exact sequence $0 \to N \to \mathcal F \to K_C-A \to 0$ and the assumption on $N$; in particular, for any $u \in {\rm Ext}^1(K_C-A, N)$, one has 
$h^1(\mathcal F_u) = 2$. 

To prove (a), in order to ease notation, we set $L:= K_C-A$ and $\delta:=\deg\;L = \deg \; K_C -A = 2g-2-\nu$.  

\noindent
$\bullet$ For $3g-5 \leq d \leq 4g-6 - 2 \nu$, one can reason as in the proof of \cite[Theorem 3.1]{CFK}. Indeed, the upper bound on $d$ implies $2\delta-d = 2 (2g-2-\nu) - d \ge 2$, so  
one can apply  Theorem \ref{LN} with $s = 2\delta - d$ and $ C\stackrel{|K_C+L-N|}{\longrightarrow} X\subset\mathbb P:=\mathbb P(\ext^1(L,N))$. With these choices, one has 
$$\dim\left(\Sec_{\frac{1}{2}(2(2\delta-d)-2)}(X)\right)=2(2\delta-d)-3<2 \delta-d+g-2= \dim\;\mathbb P,$$where the last equality follows from \eqref{eq:m}  
and the fact that $L = K_C-A \ncong N$, as $\deg \; L-N = 2 \delta - 2 \geq 2$, whereas the strict inequality in the middle follows from \eqref{eq:ourbounds}, as 
$2\delta-d=4g-4-2 \nu -d \leq g +1 - 2 \nu \leq g-5$. Thus, $\mathcal F= \mathcal F_u$ arising from 
$u \in {\rm Ext}^1(K_C-A, N)$ general is of degree $d$ and stable, 
since $s(\mathcal F_u) = 2 \delta - d = 4g-4 - 2\nu -d \geq 2$; the equality $s(\mathcal F_u) = 2 \delta - d = 4g-4 - 2\nu -d$ follows from Theorem \ref{LN} and 
the fact that $u  \in {\rm Ext}^1(K_C-A, N)$.

\vskip 2pt

\noindent
$\bullet$ For $d = 4g-5-2\nu$, Theorem \ref{LN} does not apply, as in this case one has $2 \delta - d =1$. On the other hand, since $d$ is odd, to prove stability of $\mathcal F=\mathcal F_u$ general as above is equivalent to show that $\mathcal F_u$ 
is not unstable. Assume, by contradiction there exists a sub-line bundle $M \hookrightarrow \mathcal F_u$  such that $\deg\; M \geq 2 g - 2 - \nu > \frac{d}{2}$. We would get therefore the
following commutative diagram:
\[
\xymatrix@C-2mm@R-2mm{ & & 0 \ar[d] \\
& & M \ar[d] \ar[dr]^{\varphi} \\
0 \ar[r] & N \ar[r] & \mathcal F_u \ar[r] & K_C-A \ar[r] & 0.
}
\]Since $\deg \; N = 2g - 3 - \nu$, $\varphi$ is not the zero-map. On the other hand, $\varphi$ can be neither strictly injective (for degree 
reasons) nor an isomorphism (otherwise $\mathcal F_u \cong N \oplus (K_C-A)$, contradicting the generality of $u  \in {\rm Ext}^1(K_C-A, N)$).
\end{proof}

Now we can prove that $B_{\rm sup}$ as in Theorem \ref{thm:main2} (ii) is a component of  $B^{k_2}_d$. The definition of the locus $B_{\rm sup} \subset B_d^{k_2} \cap U_C^s(2,d)$ follows from Lemma \ref{lem:i=2.2} and the 
construction in \cite[\S\;3.1]{CFK}, which still works under condition \eqref{eq:ourbounds}; precisely, using the diagram after \cite[Lemma\;3.3]{CFK}, one can consider a vector bundle 
$\mathcal E_{d,\nu}$ on a suitable open, dense subset $S \subseteq \pic^{d-2g+2+\nu}(C)$,  whose rank is  
$ \dim{\ext^1(K_C-A,N)}= 5g-5- 2 \nu -d $ as in \eqref{eq:m}, since $K_C-A \ncong N$ (cf.\;\cite[pp.\;166-167]{ACGH}). 
Taking the associated projective bundle $\mathbb P(\mathcal E_{d,\nu})\to S$ (consisting of the family of $\mathbb P\left(\ext^1(K_C-A,N)\right)$'s as  $N$ varies in $S$) 
one has$$ \dim \mathbb P(\mathcal E_{d,\nu}) 
=g+ (5g-5- 2 \nu -d) -1=6g-6-2 \nu  -d. $$From Lemma \ref{lem:i=2.2}, one has a natural (rational) map
 \begin{eqnarray*}
 &\mathbb P(\mathcal E_{d,\nu})\stackrel{\pi_{d,\nu}}{\dashrightarrow} &U_C(2,d) \\
 &(N, u)\to &\mathcal F_u;
 \end{eqnarray*} which  gives $ \im (\pi_{d,\nu})\subseteq B^{k_2}_d \cap  U^s_C(2,d)$. Once we show that $\pi_{d,\nu}$ is birational 
onto its image, we will get that the closure $B_{\rm sup}$ of $\im (\pi_{d,\nu})$ in $U_C(2,d)$ is {\em ruled}, being 
birational to $ \mathbb P(\mathcal E_{d,\nu})$ which is ruled over $\pic^{d-2g+2+\nu}(C)$, and such that $\dim \; B_{\rm sup} = 6g - 6 - 2\nu - d$.

\begin{claim}\label{cl:birat} $\pi_{d,\nu}$ is birational onto its image. 
\end{claim}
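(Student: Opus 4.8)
The plan is to prove that $\pi_{d,\nu}$ is \emph{generically injective}; since $\pi_{d,\nu}$ is dominant onto the irreducible variety $B_{\rm sup}=\overline{\im(\pi_{d,\nu})}$ and we work over $\mathbb{C}$, generic injectivity of a dominant rational map forces it to be birational onto its image, and the dimension count $\dim \mathbb P(\mathcal E_{d,\nu})=6g-6-2\nu-d$ will then agree with $\dim B_{\rm sup}$. The starting observation is that, by Lemma \ref{lem:i=2.2}\,(a), for $(N,u)$ general the line bundle $K_C-A$ is a quotient of $\mathcal F=\mathcal F_u$ of minimal degree, and that, as $C$ is a general $\nu$-gonal curve, $W^1_\nu=\{A\}$ is a single reduced point, so the quotient $K_C-A$ is common to every fibre of $\mathbb P(\mathcal E_{d,\nu})\to S$. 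Hence a point of $\pi_{d,\nu}^{-1}(\mathcal F)$ amounts to a sub-line bundle $N'\subset\mathcal F$ with $\deg N'=d-2g+2+\nu$ and $\mathcal F/N'\cong K_C-A$, together with the induced class $[u']\in\mathbb P(\ext^1(K_C-A,N'))$; it therefore suffices to show that for $\mathcal F$ general there is exactly one such $N'$, after which $[u']$ is automatically determined.

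The key step is the computation ${\rm Hom}(\mathcal F,K_C-A)\cong\mathbb C$ for general $\mathcal F$. Applying ${\rm Hom}(-,K_C-A)$ to \eqref{exactB1} yields the left-exact sequence $0\to{\rm Hom}(K_C-A,K_C-A)\to{\rm Hom}(\mathcal F,K_C-A)\to{\rm Hom}(N,K_C-A)$, where the first term is $1$-dimensional and the last is $H^0(K_C-A-N)$. Now $\deg(K_C-A-N)=4g-4-2\nu-d=s(\mathcal F)$, which by \eqref{eq:ourbounds} and $\nu\ge3$ satisfies $1\le s(\mathcal F)\le g-5$. Since $A$ and $K_C$ are fixed while $N$ is general in $\pic^{d-2g+2+\nu}(C)$, the line bundle $K_C-A-N$ is a general element of $\pic^{s(\mathcal F)}(C)$ of degree in $[1,g-1]$, hence non-effective, so $H^0(K_C-A-N)=0$ and ${\rm Hom}(\mathcal F,K_C-A)$ is $1$-dimensional. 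Consequently every nonzero homomorphism $\mathcal F\to K_C-A$ is a scalar multiple of the quotient map of \eqref{exactB1}, in particular surjective with the same kernel; thus $N'=\ker(\mathcal F\to K_C-A)$ is the \emph{unique} sub-line bundle with the required quotient, and the extension class is then pinned down up to the scalar action, i.e. as a single point $[u']\in\mathbb P(\ext^1(K_C-A,N'))$. Therefore $\pi_{d,\nu}^{-1}(\mathcal F)$ is a single point.

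The delicate point is precisely this uniqueness of the minimal quotient: a priori $\mathcal F$ could carry several maximal sub-line bundles of the same degree, which would make $\deg\pi_{d,\nu}>1$, and the very superabundance of $B_{\rm sup}$ makes such collisions conceivable. The computation above rules them out by forcing $\deg(K_C-A-N)<g$, which in turn rests on the generality of $N$ and on the hypothesis $\nu\ge3$ in \eqref{eq:ourbounds}; I expect the only genuine content to be checking that $K_C-A-N$ is non-effective throughout the whole range $3g-5\le d\le 4g-5-2\nu$, the remainder being the standard reconstruction of an extension from its middle term. Granting generic injectivity, $\pi_{d,\nu}$ is birational onto $B_{\rm sup}$, which is accordingly \emph{ruled} over $\pic^{d-2g+2+\nu}(C)$ and of dimension $6g-6-2\nu-d$, as asserted.
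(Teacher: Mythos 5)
Your argument is correct and is essentially the paper's: the paper recovers the pair $(N,[u])$ from $\mathcal F_u$ by noting that the section $\Gamma$ corresponding to the quotient $\mathcal F_u \twoheadrightarrow K_C-A$ is the unique one of its degree and speciality and is \emph{linearly isolated}, and the latter assertion is precisely your computation $\hom(\mathcal F_u, K_C-A)=1$, which follows from $h^0(K_C-A-N)=0$ for $N$ general of degree $d-2g+2+\nu$. The only difference is that you carry out explicitly the cohomological check that the paper states geometrically and delegates to \cite[Lemma 6.2]{CF}.
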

\begin{proof}[Proof of Claim \ref{cl:birat}] Let $\Gamma \subset F := \mathbb{P}(\mathcal F_u)$ be the section of the ruled surface $F$ corresponding to the quotient $\mathcal F_u \to\!\!\!\!\to K_C-A$. 
$\Gamma$ is the only section of degree $2g-2-\nu$ and speciality $2$ of $F$, since $K_C-A$ is the only line bundle with these properties on $C$. Moreover $\Gamma$ is also linearly isolated. 
This guarantees that  $\pi_{d,\nu}$ is birational onto its image (for more details see the proof of \cite[Lemma\;6.2]{CF}). 
\end{proof}

Now we can show the following:

\begin{theorem}\label{lem:i=2.1} Under assumptions \eqref{eq:ourbounds}, $B_{\rm sup}$ is an irreducible  component of $B^{k_2}_d \color{black} \cap U^s_C(2,d)$ \color{black} which is {\em superabundant}. Moreover, it is:  
\begin{enumerate} 
\item[(i)] generically smooth, if $d\geq 3g-3$,
\item[$(ii)$] non-reduced, if $d=3g-4, \ 3g-5$. 
\end{enumerate}
\end{theorem}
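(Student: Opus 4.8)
The plan is to prove the four assertions---that $B_{\rm sup}$ is a component, that it is superabundant, and the smoothness/non-reducedness dichotomy---by a single Zariski tangent space computation at a general point of $B_{\rm sup}$. We already know from Lemma \ref{lem:i=2.2} and the construction preceding Claim \ref{cl:birat} that $B_{\rm sup}$ is irreducible, ruled, of dimension $\dim B_{\rm sup}=6g-6-2\nu-d$, and that its general member $\mathcal{F}$ fits in \eqref{exactB1} with $h^0(\mathcal{F})=k_2$ and $h^1(\mathcal{F})=2$. Superabundance is then immediate by comparison with the expected dimension: one has $\dim B_{\rm sup}-\rho_d^{k_2}=(6g-6-2\nu-d)-(8g-2d-11)=d-2g-2\nu+5$, which is non-negative under \eqref{eq:ourbounds} and strictly positive except for $(\nu,d)=(g/2,\,3g-5)$, where it vanishes; in that boundary case superabundance will instead follow from non-reducedness.

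For the tangent space, recall that at a point $\mathcal{F}$ with $h^0(\mathcal{F})=k_2$ exactly one has $\dim T_{\mathcal{F}}B_d^{k_2}=\rho_d^{k_2}+\dim\ker\mu$, where $\mu\colon H^0(\mathcal{F})\otimes H^0(K_C\otimes\mathcal{F}^\vee)\to H^0(K_C\otimes\mathcal{E}nd(\mathcal{F}))$ is the Petri (cup-product) map. The idea is to compute $\ker\mu$ explicitly. Dualizing \eqref{exactB1} and twisting by $K_C$ gives $0\to A\to K_C\otimes\mathcal{F}^\vee\to K_C-N\to 0$ with $h^0(K_C-N)=h^1(N)=0$, so that $H^0(K_C\otimes\mathcal{F}^\vee)\cong H^0(A)$ is carried by the sub-line bundle $A$. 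Restricting the second factor of $\mu$ to $H^0(A)$ and using that $H^0(\mathcal{F}\otimes A)\hookrightarrow H^0(K_C\otimes\mathcal{E}nd(\mathcal{F}))$ is injective, $\ker\mu$ coincides with the kernel of the multiplication $H^0(\mathcal{F})\otimes H^0(A)\to H^0(\mathcal{F}\otimes A)$. Since $|A|=g^1_\nu$ is base-point-free, the base-point-free pencil trick applied to $0\to A^{-1}\to H^0(A)\otimes\mathcal{O}_C\to A\to 0$ tensored by $\mathcal{F}$ yields the clean identification $\ker\mu\cong H^0(\mathcal{F}\otimes A^{-1})$. Thus $\dim T_{\mathcal{F}}B_d^{k_2}=\rho_d^{k_2}+h^0(\mathcal{F}\otimes A^{-1})$, and the whole analysis reduces to computing this last number.

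To evaluate $h^0(\mathcal{F}\otimes A^{-1})$ I would twist \eqref{exactB1} by $A^{-1}$, obtaining $0\to N-A\to\mathcal{F}\otimes A^{-1}\to K_C-2A\to 0$, and read off cohomology. Here $\deg(N-A)=d-2g+2$ and $N-A$ is general in its Picard variety, while $h^0(K_C-2A)=g-2\nu+2$ is forced by the special geometry of the gonality pencil on a general $\nu$-gonal curve (balancedness of the scrollar invariants gives $h^0(2A)=3$ when $2\nu\le g$). For $d\ge 3g-3$ the line bundle $N-A$ is non-special ($h^1(N-A)=0$ for $d=3g-3$, and $h^0(N-A)=d-3g+3$ with $h^1=0$ for $d\ge 3g-2$), the connecting map in the twisted sequence vanishes trivially, and one finds $h^0(\mathcal{F}\otimes A^{-1})=d-2g-2\nu+5$; hence $\dim T_{\mathcal{F}}B_d^{k_2}=\dim B_{\rm sup}$, so $B_{\rm sup}$ is a generically smooth (in particular reduced) component, proving (i). For $d=3g-4,\,3g-5$ instead $N-A$ becomes special, with $h^1(N-A)=3g-3-d\in\{1,2\}$, and the crux is to show that the connecting homomorphism $\partial\colon H^0(K_C-2A)\to H^1(N-A)$ vanishes for the general extension class, so that $h^0(\mathcal{F}\otimes A^{-1})=g-2\nu+2$, strictly larger than $d-2g-2\nu+5$ by exactly $h^1(N-A)$. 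Consequently $\dim T_{\mathcal{F}}B_d^{k_2}>\dim B_{\rm sup}$ in these two cases.

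Finally, the inequality $\dim T_{\mathcal{F}}B_d^{k_2}>\dim B_{\rm sup}$ only yields non-reducedness once we know that $B_{\rm sup}$ is genuinely an irreducible component---otherwise the excess tangent directions could merely reflect containment in a larger component. I would therefore establish maximality of $B_{\rm sup}$ among all loci whose general member has $h^1(\mathcal{F})=h^1(L)=2$, by the same parametric/determinantal bookkeeping used in Proposition \ref{lem:i=1.1}: applying Theorem \ref{CF5.8} to bound the dimension of any such locus, using that on a general $\nu$-gonal curve the only $g^1_\nu$ is $|A|$ (so that the minimal quotient $K_C\otimes A^\vee$ of Lemma \ref{lem:i=2.2} is forced), and invoking Lemma \ref{speciality3} to exclude higher speciality. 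Granting this, $B_{\rm sup}$ is a component and the tangent computation shows it is generically non-reduced for $d=3g-4,\,3g-5$, completing (ii). The main obstacle is precisely the boundary tangent-space computation: proving that $\partial$ drops rank by the full amount $h^1(N-A)$, a delicate analysis of the cup-product and multiplication maps attached to the pencil $|A|$ and the general line bundle $N$, and exactly the phenomenon that is invisible in the range $d\ge 3g-1$ treated in \cite{CFK}.
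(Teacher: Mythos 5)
Your reduction of everything to the Petri map, the identification $H^0(\omega_C\otimes\mathcal F^{\vee})\cong H^0(A)$, and the computation $\ker\mu\cong H^0(\mathcal F\otimes A^{-1})$ via the base-point-free pencil trick is essentially the computation the paper performs (there it is organized as the splitting $\ker\mu_{\mathcal F}\cong\ker\mu_{0,A}\oplus\ker\mu_{A,N}$ of Claim \ref{cl:ker}, with $\ker\mu_{0,A}\cong H^0(K_C-2A)$ and $\ker\mu_{A,N}\cong H^0(N-A)$), and your treatment of part (i), $d\ge 3g-3$, is complete and agrees with the paper's. But for $d=3g-4,\,3g-5$ you have not proved the statement: you yourself isolate the vanishing of the connecting map $\partial_u\colon H^0(K_C-2A)\to H^1(N-A)$ for general $u$ as ``the crux'' and ``the main obstacle'' and then leave it open. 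This is not a fill-in-the-details point. The map $\partial_u$ is cup product with the class $u\in\ext^1(K_C-A,N)\cong H^1(N+A-K_C)$, and by Serre duality $\partial_u=0$ if and only if $u$ annihilates the image of the multiplication map $H^0(K_C-2A)\otimes H^0(K_C+A-N)\to H^0(2K_C-A-N)$. In the range $d=3g-4,3g-5$ both factors are nonzero ($h^0(K_C-2A)=g+2-2\nu>0$ and $h^0(K_C+A-N)=h^1(N-A)=3g-3-d>0$), so that image is nonzero and $\{u\,:\,\partial_u=0\}$ is a \emph{proper} linear subspace of $\ext^1(K_C-A,N)$; the vanishing you need therefore appears to fail for the generic extension class, and the step cannot be closed as stated. (Note that when $\coker\mu_{A,N}\cong H^1(N-A)\neq 0$ the direct-sum splitting asserted in Claim \ref{cl:ker} is equivalent, via the snake lemma, to the vanishing of this same connecting map, so the delicacy you flagged is genuinely the heart of part (ii) and deserves an explicit argument rather than a deferral.)

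There is a second gap: your argument that $B_{\rm sup}$ is actually a component --- without which the tangent-space excess would be compatible with $B_{\rm sup}$ sitting inside a larger component, as you rightly observe --- is only gestured at, and the tools you invoke are not the ones that apply. Theorem \ref{CF5.8} governs coranks of the coboundary in the $h^1(L)=1$ analysis of Proposition \ref{lem:i=1.1}; for $h^1(L)=2$ the paper instead proves Lemma \ref{lemsup}: on a general $\nu$-gonal curve every special $L$ with $h^1(L)=2$ and $\deg L>\frac{3g-5}{2}$ has the form $K_C-A-B_b$ with $B_b$ a base divisor, and a general extension of $K_C-A-B_b$ by $N_b$ is a flat specialization of a general extension \eqref{exactB1}, thanks to the surjection $H^1(N+A-K_C)\twoheadrightarrow H^1(N+2B_b+A-K_C)$ coming from elementary transformations. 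This specialization argument is what forces any such locus into $B_{\rm sup}$, and it needs to be supplied, not cited by analogy with the regular component.
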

\begin{proof} The result will follow once we prove that, for general $\mathcal F \in B_{\rm sup}$,
\begin{equation} \label{tan_sup} \dim T_{\mathcal F}(B^{k_2}_d)=\begin{cases}\dim B_{\rm sup}  &\mbox{  if  } d\geq 3g-3\\ 
\dim B_{\rm sup}  +3 g-3 -d &\mbox{  if  } d= 3g-4, \ 3g-5\end{cases} 
\end{equation}and moreover, for $d = 3g-4,\;3g-5$, $B_{\rm sup}$ is actually a component of $B^{k_2}_d$. 

Concerning tangent space computations, one can consider the Petri map of a general $\mathcal F\in B_{\rm sup}$:
$$ \mu_\mathcal F: H^0(\mathcal F)\otimes H^0(\omega_C\otimes \mathcal F^*)\to H^0(\omega_C\otimes \mathcal F\otimes \mathcal F^*). $$Since, by construction of $B_{\rm sup}$ as a birational image of $\mathbb P(\mathcal E_{d,\nu})$, 
$\mathcal F$ general fits in an exact sequence as  \eqref{exactB1}, with $N \in {\rm Pic}^{d-2g+2 + \nu}(C)$ general; by \eqref{eq:ourbounds} one has therefore $h^1(N)=0$. Thus, we have
$$H^0(\mathcal F)\simeq H^0(N)\oplus H^0(K_C-A) \;\;\; {\rm and} \;\;\; 
H^0(\omega_C\otimes \mathcal F^*)\simeq H^0(A).$$In particular, $\mu_\mathcal F$ reads as 
\begin{equation*}
\begin{CD}
\left(H^0(N)\oplus H^0(K_C-A)\right)&\;\otimes\;& \;H^0(A)&\;\; \stackrel
{\mu_\mathcal F }{\longrightarrow}\;\; &H^0(\omega_C\otimes \mathcal F\otimes \mathcal F^*).\\
\end{CD}
 \end{equation*}  Consider the following natural multiplication maps:
 \begin{eqnarray}
 \mu_{A,N}:& H^0(A)\otimes H^0(N)\to H^0(N+A)\label{muA}\\
 \mu_{0,A}: & H^0(A)\otimes H^0(K_C-A)\to H^0(K_C)\label{mu0}. 
 \end{eqnarray}
\begin{claim}\label{cl:ker}  $\ker(\mu_\mathcal F)\simeq \ker(\mu_{0,A})\oplus \ker(\mu_{A,N}) $. 
\end{claim}
\begin{proof}[Proof of Claim \ref{cl:ker}] The proof is a simplified and extended version of \cite[Proof of Claim 3.5]{CFK}.  Since $h^1(N) = h^1(N+A) = 0$, one has the following commutative diagram 

\begin{equation*}\label{eq1b}
{\small
\begin{array}{ccl}
0&&0\\[1ex]
\downarrow &&\downarrow\\[1ex]
\color{black} H^0(A) \otimes \color{black} H^0(N) & \stackrel{\mu_{A,N}}{\longrightarrow} & H^0(N+A)  \\[1ex]
\downarrow && \downarrow  \\[1ex]
 \color{black} H^0(A) \otimes \color{black} H^0(\mathcal F) & \stackrel{\mu_\mathcal F}{\longrightarrow} & H^0(\mathcal F \otimes A) \subset H^0 (\omega_C \otimes \mathcal F\otimes \mathcal F^*) \\[1ex]
\downarrow &&\downarrow \\[1ex]
H^0(A)\otimes H^0(K_C-A) & \stackrel{\mu_{0,A}}{\longrightarrow}   & H^0(K_C) \\[1ex]
\downarrow &&\downarrow\\[1ex]
0&&0
\end{array}
}
\end{equation*}where the column on the left comes from the $H^0$-cohomology sequence of \eqref{exactB1} tensored by $H^0(A)$, whereas the column on the right comes from  \eqref{exactB1} tensored by $A$ and then \color{black} taking \color{black} cohomology. 
The previous diagram proves the statement. 
\end{proof} 

\noindent 
By Claim \ref{cl:ker}, one has 
\begin{eqnarray*}
\dim T_{\mathcal F}(B^{k_2}_d)&=&4g-3-h^0(\mathcal F)h^1(\mathcal F)+\dim\;\ker\;\mu_\mathcal F \\
&=&4g-3-2(d-2g+4)+\dim\;\ker\;\mu_0(A)+\dim\;\ker\;\mu_{A,N}.
\end{eqnarray*} From \eqref{muA} and \eqref{mu0}, we have
\begin{equation}\label{eq:kers}
\ker(\mu_{0,A})\simeq H^0(K_C-2A)\cong H^1(2A)^*\;\; {\rm and} \;\;   \ker(\mu_{A,N})\simeq H^0(N-A),
\end{equation}as it follows from the base point free pencil trick. 

From \cite[Theorem\;(2.6)]{ACGH} and \cite[p. 869,\;Theorem\;(12.16)]{ACG}, one has 
\begin{equation}\label{eq:h12A}
h^1(2A)=g + 2 -2 \nu. 
\end{equation}As for $\ker(\mu_{A,N})$, the generality of $N$ 
implies that $N-A$ is general of its degree, which is $\deg \; N-A = \deg \; N - \nu =d-2g+2$. Therefore  it follows that 
$$\begin{cases}
h^1 (N-A) =0, \mbox{ equivalently, } h^0 (N-A) =d-3g+3, &\mbox{ for } d\geq 3g-3\\
 h^0 (N-A) = 0, &\mbox{ for } d= 3g-4, \ 3g-5. \end{cases}$$Thus we have
$$ \dim T_{\mathcal F}(B^{k_2}_d) =\begin{cases} 4g-3-2(d-2g+4)+(g + 2 -2 \nu) + (d-3g+3) &\mbox{ if } d\geq 3g-3\\
4g-3-2(d-2g+4)+(g + 2 -2 \nu) &\mbox{ if } d=3g-4, \ d=3g-5, 
\end{cases}$$
which gives \eqref{tan_sup} since $\dim B_{\rm sup} = 6g-6-2 \nu  -d.$

The fact that $B_{\rm sup}$ is actually a (non--reduced) component of $B_d^{k_2}$, for $d = 3g-4,\;3g-5$, will be a direct consequence of the previous computations and the next more general result.  
\end{proof}

In the next lemma we will prove that $B_{\rm sup}$ is the only component of 
$B_{d}^{k_2}$, for which the general bundle $\mathcal F$ is such that 
$h^1(\mathcal F) = h^1(L) =2$, for suitable $L$ as in \eqref{degree}. In particular, this will also imply that, for $d = 3g-4,\;3g-5$, $B_{\rm sup}$ cannot be strictly contained in a component of $B_{d}^{k_2}$, 
finishing the proof of  Theorem \ref{lem:i=2.1}.

\begin{lemma}\label{lemsup} Assume that $\mathcal B$ is any irreducible  component of $B^{k_2}_d$ such that a general $\mathcal F \in  \mathcal B$ fits in an exact sequence like \eqref{degree}, 
with $h^1(\mathcal F)=h^1(L)=2$.  Then $\mathcal B = B_{\rm sup}$. 
\end{lemma}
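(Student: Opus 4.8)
The plan is to mirror the proof of Proposition \ref{lem:i=1.1}, now in the regime $h^1(L)=2$: a dimension estimate, combined with the gonality of $C$, will force the quotient to be $K_C-A$ and the sub-bundle $N$ to be general, so that $\mathcal B$ must coincide with the locus $B_{\rm sup}$ constructed in \S\ref{ss:superabundant}. First I would normalize the presentation. Setting $\delta:=\deg\,L$, the cohomology sequence of \eqref{degree} gives $h^1(\mathcal F)=h^1(L)+\dim\coker(\partial_u)$, so the hypothesis $h^1(\mathcal F)=h^1(L)=2$ forces $\partial_u\colon H^0(L)\to H^1(N)$ to be surjective. By Theorem \ref{CF5.8}(ii) this is exactly the behaviour of a \emph{general} $u\in\ext^1(L,N)$ (as soon as $h^0(L)\ge h^1(N)$); thus, in contrast with the case of $B_{\rm reg}$, the relevant bundles are parametrized by a dense open subset of the full extension space, not by a proper determinantal locus $\mathcal W_t$. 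By Serre duality the line bundle $M:=K_C-L$, of degree $m:=2g-2-\delta$, satisfies $h^0(M)=h^1(L)=2$, i.e. $|M|$ is a $g^1_m$ on $C$.

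The gonality of $C$ is the crux. Semistability of $\mathcal F$ gives $\delta\ge d/2\ge (3g-5)/2$ by \eqref{eq:ourbounds}, hence $m=2g-2-\delta\le (g+1)/2$ and so $\rho(g,1,m)=2m-g-2<0$. Since $C$ is a general $\nu$-gonal curve it carries no Brill--Noether-general pencil of this degree: one has $W^1_m=\emptyset$ for $m<\nu$, while for $m\ge\nu$ every $g^1_m$ has the gonal series $g^1_\nu$ as its moving part (cf. \cite{AC}), so $M\cong A+B$ with $B\in C^{(m-\nu)}$ effective, $\dim W^1_m=m-\nu$ and $W^1_\nu=\{A\}$. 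In particular $\delta\le 2g-2-\nu$, with $M\cong A$ (equivalently $B=0$ and $L\cong K_C-A$) precisely when $\delta=2g-2-\nu$.

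Next I would run the parametric construction of \cite[\S6]{CF} used for $B_{\rm sup}$: over a base parametrizing the admissible pairs $(L,N)$ one forms the projective bundle with fibre $\mathbb P(\ext^1(L,N))$ together with a dominant rational map onto $\mathcal B$, and bounds $\dim\mathcal B$ by the sum of the dimensions of the base and of the fibre. For $N$ general (hence non-special, as $\deg N=d-\delta\ge d-2g+2+\nu\ge g-1$ by \eqref{eq:ourbounds}), formula \eqref{eq:m} yields
\[
\dim \mathcal B\ \le\ \dim W^1_m+\dim \pic^{d-\delta}(C)+\dim \mathbb P\bigl(\ext^1(L,N)\bigr)=(m-\nu)+g+(2\delta-d+g-2),
\]
that is $\dim\mathcal B\le 4g-4-\nu+\delta-d$. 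This bound is increasing in $\delta$ and attains the value $\dim B_{\rm sup}=6g-6-2\nu-d$ exactly at the maximal value $\delta=2g-2-\nu$, i.e. for $B=0$ and $L\cong K_C-A$; one checks (using Clifford's and Martens' theorems \cite{ACGH} to estimate the locus of admissible $N$) that allowing $N$ to be special only decreases the bound. At this extremal value the family is precisely the one of \S\ref{ss:superabundant}, the associated map is birational by Claim \ref{cl:birat}, and $K_C-A$ is the minimal quotient by Lemma \ref{lem:i=2.2}(a); hence the corresponding component is $B_{\rm sup}$.

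The main obstacle is to exclude the strata with $B\ne 0$ (equivalently $\delta<2g-2-\nu$) as sources of further components. Here a comparison with the expected dimension does not suffice, since for $d>2g+2\nu-5$ the estimate above can exceed $\rho^{k_2}_d$; the estimate does, however, give $\dim\mathcal B\le 6g-7-2\nu-d<\dim B_{\rm sup}$ for every such stratum (as then $\delta\le 2g-3-\nu$), so it remains to show that these bundles do not form components but lie in $\overline{B_{\rm sup}}$. The decisive difference from Claim \ref{cl:birat} is that $K_C-A-B$ is no longer the unique line bundle of its degree and speciality $2$ — the entire family $\{K_C-A-B'\mid B'\in C^{(m-\nu)}\}$ qualifies — so the section of $\mathbb P(\mathcal F)$ cut out by such a quotient is not rigid. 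I expect to use this non-rigidity, together with the elementary-transformation technique of \cite[Lemme 2.6]{L} already invoked in Lemma \ref{speciality3}, to realize each such $\mathcal F$ — whose minimal quotient $K_C-A-B$ has the strictly smaller Segre value $2\delta-d<4g-4-2\nu-d$ — as a degeneration of bundles in $B_{\rm sup}$, thereby placing the stratum in the boundary of $B_{\rm sup}$. This excludes it as a component and leaves $B_{\rm sup}$ as the unique component with $h^1(\mathcal F)=h^1(L)=2$, completing the proof.
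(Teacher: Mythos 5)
Your reduction coincides with the paper's: semistability plus \cite[Theorem 2.6]{AC} force $L\cong K_C-A-B_b$ with $B_b\in C^{(b)}$ an effective base locus, and you correctly diagnose that the whole difficulty is to show the strata with $b>0$ are absorbed into $\overline{B_{\rm sup}}$ rather than forming separate, lower-dimensional components --- your dimension count, while arithmetically right, cannot decide this, as you yourself observe. The gap is that the decisive step is only announced, not proved: ``I expect to use this non-rigidity, together with the elementary-transformation technique \dots to realize each such $\mathcal F$ as a degeneration of bundles in $B_{\rm sup}$'' is exactly the statement that requires an argument. The non-rigidity of the section cut out by $K_C-A-B_b$ does not by itself produce a flat family of bundles of $B_{\rm sup}$ degenerating to $\mathcal F$; something must be exhibited.

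The paper's mechanism is short but specific, and you need to supply it (or an equivalent). Identify $\ext^1(K_C-A,N)\cong H^1(N+A-K_C)$ and $\ext^1(K_C-A-B_b,N+B_b)\cong H^1(N+2B_b+A-K_C)$ with $N\cong N_b-B_b$; tensoring $0\to\mathcal O_C\to\mathcal O_C(2B_b)\to\mathcal O_{2B_b}\to 0$ by $N+A-K_C$ and taking cohomology yields a surjection
$$H^1(N+A-K_C)\twoheadrightarrow H^1(N+2B_b+A-K_C),$$
so every extension of $K_C-A-B_b$ by $N_b$ arises, via elementary transformation along $B_b$, as a flat specialization of general extensions of $K_C-A$ by $N$; by Lemma \ref{lem:i=2.2}(a), semicontinuity and the construction of $B_{\rm sup}$ the latter give general points of $B_{\rm sup}$, whence $\mathcal B\subseteq B_{\rm sup}$ and equality follows because $\mathcal B$ is a component. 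A second, smaller omission: before degenerating you must know that the general $\mathcal F\in\mathcal B$ comes from a \emph{general} class $u\in\ext^1(K_C-A-B_b,N_b)$, not merely some special one; the paper gets this from semicontinuity of $\mathrm{corank}(\partial_u)$ and openness of semistability on the affine extension space. Your parametric set-up delivers this implicitly, so that point is minor; the missing surjectivity argument above is the real issue.
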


\begin{proof} Since $\mathcal F$ is  semistable, from \eqref{eq:neccond} and \eqref{eq:ourbounds} one has  
 $\deg \; L\;\ge \frac{3g-5}{2}$. Moreover, since $C$ is a general $\nu$-gonal curve and $h^1(L)=2$, from \cite[Theorem\;2.6]{AC} we have
$K_C - L \cong A +B_b$, where $B_b \in C^{(b)}$ is a base locus of degree $b \geq 0$. 

By assumption, $\mathcal F$ corresponds to a suitable $v \in {\rm Ext}^1(K_C-A-B_b, N_b)$, for some \linebreak $N_b \in {\rm Pic}^{d-2g+2+\nu+b}(C)$. \color{black} Moreover, always by assumption, $L = K_C-A-B_b$ is such that $h^1(L) = h^1(K_C-A-B_b)= h^1(\mathcal F) = 2$; therefore,  by  taking cohomology in 
$0 \to N_b \to \mathcal F \to K_C-A-B_b \to 0$, irrespectively \color{black} of the speciality of $N_b$, the corresponding 
coboundary map $H^0(K_C-A-B_b) \stackrel{\partial_v}{\longrightarrow} H^1(N_b)$ has to be surjective. From \color{black} semicontinuity on the (affine) space ${\rm Ext}^1(K_C-A-B_b, N_b)$ \color{black} and the fact that semistability is an open condition, it follows that for a general $u \in   {\rm Ext}^1(K_C-A-B_b, N_b)$ the coboundary map $\partial_u$ is surjective too and $\mathcal F_u$ is semistable of speciality $2$. Since $\mathcal B$ is by assumption a component of $B^{k_2}_d$ and since 
$u $ general specializes to $v \in {\rm Ext}^1(K_C-A-B_b, N_b)$, one has that 
$\mathcal F \in \mathcal B$ has to come from a general $u \in   {\rm Ext}^1(K_C-A-B_b, N_b)$, for some $B_b \in C^{(b)}$ and some $N_b \in {\rm Pic}^{d-2g+2+\nu+b}(C)$.  

On the other hand, a general  extension as 
$$(*)\;\;\; 0 \to N_b \to \mathcal F_u \to K_C-A-B_b \to 0$$ is a flat specialization of a general extension of the form 
$$(**)\;\;\; 0 \to N \to \mathcal F \to K_C-A \to 0,$$where $N \cong N_b -B_b$; indeed extensions $(**)$ are parametrized by 
$\ext^1(K_C-A, N) \cong H^1(N+A -K_C)$ whereas extensions $(*)$ are parametrized by  $\ext^1(K_C-A-B_b,N+B_b) \cong H^1(N +  2B_b + A - K_C)$ and the aforementioned  existence of such a flat specialization is granted by the surjectivity   
$$H^1(N+A -K_C) \twoheadrightarrow  H^1(N + 2 B_b + A - K_C),$$which follows from the exact sequence 
$0 \to \mathcal O_C \to \mathcal O_C(2 B_b) \to \mathcal O_{2B_b} \to 0$ tensored by $N+A -K_C$ (cf. \cite[pp.\;101-102]{L} for the use of {\em elementary transformations} of vector bundles to interpret the above surjectivity).

From Lemma \ref{lem:i=2.2} (a), semicontinuity and the construction of $B_{\rm sup}$, general extension $(**)$ gives rise to a point of 
$B_{\rm sup}$; by specialization of a general $(**)$ to a general $(*)$, one can conclude that $\mathcal B \subseteq B_{\rm sup}$, i.e. $\mathcal B =  B_{\rm sup}$. 
\end{proof}

\begin{remark}\label{rem:sup_reg} 
{\normalfont Notice that \eqref{eq:ourbounds} implies $\nu \leq \frac{g}{2}$; more precisely, when $\nu =\frac{g}{2}$, one can easily compute that the only admissible value for $d$ in \eqref{eq:ourbounds} is $d=3g-5$. 
In such a case, i.e. $(\nu, d) = (\frac{g}{2}, 3g-5)$, one has $\dim \; B_{\rm reg} = \dim \; B_{\rm sup} = \rho_{3g-5}^{g-1} = 2g-1$. On the other hand, 
for any $d$ and $\nu$ as in \eqref{eq:ourbounds}, \eqref{tan_sup} states that 
$\dim T_{\mathcal F_u}(B^{k_2}_d) > \rho ^{k_2} _d$ for general $\mathcal F_u \in B_{\rm sup}$ whereas, from Proposition \ref{thm:i=1.3},  $\dim T_{\mathcal F_v} (B^{k_2}_d) =\rho ^{k_2}_d $ for general 
$\mathcal F_v \in B_{\rm reg}$. Thus, for $(\nu, d) = (\frac{g}{2}, 3g-5)$, $B_{\rm reg}$ and $B_{\rm sup}$ are distinct irreducible components of $B^{g-1}_{3g-5}$, both of expected dimension, the first 
regular the second superabundant, being non-reduced. 
}
\end{remark}


\section{Very-ampleness of vector bundles as in Theorem \ref{thm:main2}}\label{S:va} In this section, we will find sufficient conditions guaranteeing that a general bundle in 
$B_{\rm reg}$ (respectively, in $B_{\rm sup}$) is very ample. 

Concerning the component $B_{\rm reg}$ we already observed that, as predicted also by Theorem \ref{TeixidorRes}, it makes sense also on $C$ with general moduli and it is actually 
the unique component of $B_{d}^{k_2} \cap U^s_C(2,d)$ on such a $C$. Construction and properties of $B_{\rm reg}$ in this case are similar to those conducted in \S\;\ref{ss:regular}. 
We will find therefore very--ampleness conditions for a general $\mathcal F \in B_{\rm reg}$ also for $C$ with general moduli, since we will use this condition in \S\;\ref{ss:Hreg}.

As for $B_{\rm sup}$ on a general $\nu$--gonal curve $C$ as above, in order to find sufficient very--ampleness conditions for $\mathcal F \in B_{\rm sup}$ general, 
we will make use of the following: 

\begin{lemma}\label{S2} (cf. \cite[Corollary\;1]{KK}) On a general $\nu$-gonal curve $C$ of genus $g \ge 2\nu-2$, with 
$\nu \ge 3$, there does not exist a $g^r_{\nu - 2 + 2r}$ with $\nu - 2 + 2r \le g-1$, $r \ge 2$.  
\end{lemma}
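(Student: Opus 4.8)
Although the statement is quoted from \cite[Corollary\;1]{KK}, here is the strategy I would follow to establish it. The plan is to read the claim as the assertion that a general $\nu$-gonal curve has \emph{Clifford dimension one}. Indeed a $g^r_{\nu-2+2r}$ has Clifford index $(\nu-2+2r)-2r=\nu-2$, and for a general $\nu$-gonal curve $C$ one has $\cli(C)=\gon(C)-2=\nu-2$; moreover the bound $\nu-2+2r\le g-1$ forces speciality $h^1=g-\nu+2-r\ge 2$, so such a series actually \emph{computes} the Clifford index with $r\ge 2$. First I would reduce to a complete and base-point-free series: replacing $L=g^r_{\nu-2+2r}$ by its complete linear system, or subtracting a base point, keeps the series special, never increases the Clifford index, and would strictly lower it below $\cli(C)=\nu-2$ unless $L$ is already complete and base-point-free. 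Hence one may assume $L$ defines a morphism $\varphi\colon C\to\mathbb{P}^r$.

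I would then split according to the nature of $\varphi$. If $\varphi$ is composed with a pencil---its image a rational normal curve of degree $r$ cut out by a $g^1_m$---then $\nu-2+2r=rm$ gives $r(m-2)=\nu-2$, whence $m<\nu$ since $r\ge 2$, contradicting $\gon(C)=\nu$. If $\varphi$ factors through a covering $C\to C'$ with $C'$ of genus $\ge 1$, then $C$ admits a nontrivial map to a positive-genus curve; a general $\nu$-gonal curve admits no such map, this being a strictly lower-dimensional phenomenon controlled by a Hurwitz/Castelnuovo--Severi count. The essential case is therefore that $\varphi$ is birationally very ample, so that $C$ is birational to a non-degenerate curve of degree $d=\nu-2+2r$ and genus $g$ in $\mathbb{P}^r$.

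For this case I would run a dimension count against the stratum $\mathcal M^1_{g,\nu}$, which is irreducible of dimension $2g+2\nu-5$. It suffices to show that the locus in $\mathcal M_g$ swept out by curves carrying such a $g^r_d$ has dimension strictly smaller: being contained in that locus, the $\nu$-gonal curves with the forbidden series then form a proper closed subset of $\mathcal M^1_{g,\nu}$, which a general member avoids. The point is that the hypothesis $\nu-2+2r\le g-1$ is equivalent to $\nu+r\le g+1-r\le g-1$, i.e. $\nu+r<g$; and estimating the family of non-degenerate degree-$d$ genus-$g$ curves in $\mathbb{P}^r$ modulo $\mathrm{PGL}_{r+1}$ by means of $\chi(N_{C/\mathbb{P}^r})=(r+1)d-(r-3)(g-1)$ yields, after simplification, the comparison $(r-1)(\nu+r)<(r-1)g$, which is precisely $\nu+r<g$.

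The genuine obstacle, I expect, is to make this last bound rigorous. The naive count uses the Euler characteristic $\chi(N_{C/\mathbb{P}^r})$, whereas an honest \emph{upper} bound on the dimension of the relevant Hilbert-scheme component requires controlling $h^0(N_{C/\mathbb{P}^r})$, and the discrepancy $h^1(N_{C/\mathbb{P}^r})$ need not vanish for these Clifford-index-minimal curves. This is exactly where the real content of a Coppens--Martens type analysis lies. The route I would ultimately take to circumvent it is to replace the Hilbert-scheme estimate by a count on the universal Brill--Noether variety $\mathcal{G}^r_d\to\mathcal M_g$, combined with the refined upper bounds on $\dim W^r_d$ available for $k$-gonal curves in the spirit of \cite{AC}.
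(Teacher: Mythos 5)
First, a point of comparison: the paper does not actually prove this lemma --- it is quoted from Keem--Kim \cite{KK} (their Corollary 1), so there is no internal argument to measure yours against. Judged on its own terms, your outline reproduces the standard architecture for statements of this kind: reduce to a complete, base-point-free series; dispose of the cases where $\varphi$ is composed with a pencil or with an irrational involution (both of your arguments there are sound --- the first by the gonality contradiction $r(m-2)=\nu-2$, the second by a Hurwitz-type count showing the locus of covers of positive-genus curves has dimension $<2g+2\nu-5$); then a parameter count for the birationally very ample case. Your arithmetic in that last count is also right: with $d=\nu-2+2r$, the comparison $\chi(N_{C/\mathbb{P}^r})-\dim\mathrm{PGL}_{r+1}<\dim\mathcal M^1_{g,\nu}=2g+2\nu-5$ reduces exactly to $(r-1)(\nu+r-g)<0$, i.e.\ to the hypothesis $d\le g-1$.

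Nevertheless there is a genuine gap, and it is precisely the one you flag yourself. Every component of the relevant Hilbert scheme has dimension \emph{at least} $\chi(N_{C/\mathbb{P}^r})$ and at most $h^0(N_{C/\mathbb{P}^r})$; an \emph{upper} bound on the dimension of the family of birationally very ample $g^r_{\nu-2+2r}$'s therefore requires controlling $h^1(N_{C/\mathbb{P}^r})$, which has no reason to vanish for these Clifford-extremal models, and your count via $\chi$ goes the wrong way. Passing to the universal Brill--Noether variety $\mathcal G^r_d\to\mathcal M_g$ does not repair this by itself: the ``refined upper bounds on $\dim W^r_d$ for $\nu$-gonal curves'' you would invoke are essentially the content of the Keem--Kim/Coppens--Martens results, i.e.\ of the statement being proved. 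There is also a smaller circularity at the start: the reduction to a complete base-point-free series uses the lower bound $\mathrm{Cliff}(C)\ge\nu-2$ for a general $\nu$-gonal curve, which is the main theorem of the very paper \cite{KK} whose corollary you are reproving. So what you have is a correct road map in which the two hardest ingredients are cited rather than supplied; as a self-contained proof it does not close.
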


\begin{theorem}\label{thm:veryampleness} Take notation as in Theorem \ref{thm:main2}. 

\noindent
(i) If $C$ is a general $\nu$--gonal curve, with $d$ and $\nu$ as in \eqref{eq:ourbounds}, a general $\mathcal F \in B_{\rm reg}$ is very ample for $\nu \ge 4$ and $d \ge 3g-1$. 
If $C$ is a curve with general moduli, a general $\mathcal F \in B_{\rm reg}$ is very ample for $d \ge 3g-1$.

\vskip 2pt

\noindent
(ii) If $C$ is a general $\nu$--gonal curve, with $d$ and $\nu$ as in \eqref{eq:ourbounds}, a general $\mathcal F \in B_{\rm sup}$ is very ample for $d+\nu \ge 3g+1$.

\end{theorem}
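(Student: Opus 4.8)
The plan is to prove very-ampleness of a general $\mathcal F$ by verifying the standard criterion: a rank $2$ bundle $\mathcal F$ on $C$ is very ample if and only if for every effective divisor $Z = p+q$ of degree $2$ (including $p=q$, the length-$2$ thickened points) the restriction map $H^0(\mathcal F) \to H^0(\mathcal F|_Z)$ is surjective, equivalently $h^0(\mathcal F(-Z)) = h^0(\mathcal F) - 4$. Since by construction $h^0(\mathcal F) = k_2 = d-2g+4$, I want to show $h^0(\mathcal F(-Z)) = d-2g$ for all such $Z$. First I would handle the two cases separately, exploiting the defining extension sequences \eqref{exactB0} and \eqref{exactB1}, tensoring each by $\mathcal O_C(-Z)$ and chasing the resulting cohomology.

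For part (ii) (the superabundant component) I would start from \eqref{exactB1}, twisting by $-Z$ to get
\begin{equation*}
0 \to N(-Z) \to \mathcal F(-Z) \to \omega_C \otimes A^{\vee}(-Z) \to 0.
\end{equation*}
Because $N$ is general of degree $d-2g+2+\nu$ with $\deg(N-Z) = d-2g+\nu$, the generality forces $N(-Z)$ to behave generically, and the hypothesis $d+\nu \ge 3g+1$ gives $\deg(N(-Z)) \ge g-1$, which I would use to control $h^0$ and $h^1$ of $N(-Z)$ via Riemann--Roch and genericity. The key sub-step is then to show the quotient line bundle $\omega_C \otimes A^{\vee}$ is itself very ample away from the failure locus, i.e. that $h^0(\omega_C \otimes A^{\vee}(-Z)) = h^0(\omega_C \otimes A^{\vee}) - 2$ for all $Z$. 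This is where Lemma \ref{S2} enters: a failure of very-ampleness (or base-point-freeness) of the line bundle system $|\omega_C \otimes A^{\vee}|$ would produce a special linear series $g^r_{\nu-2+2r}$ of the type excluded by Lemma \ref{S2} on a general $\nu$-gonal curve, so I would translate the failure of the rank count into the existence of such a forbidden $g^r_{m}$ and invoke the lemma to rule it out. The remaining bookkeeping is to add the contributions from the sub-bundle $N(-Z)$ and the quotient, confirming the total drop is exactly $4$.

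For part (i) (the regular component, both on a general $\nu$-gonal curve and on a curve with general moduli) the analogous strategy starts from \eqref{exactB0}, twisting by $-Z$:
\begin{equation*}
0 \to \omega_C(-D-Z) \to \mathcal F(-Z) \to \omega_C(-p-Z) \to 0,
\end{equation*}
where $p$ and $D$ are general. Here $\omega_C(-p)$ and $\omega_C(-D)$ are close to canonical, so their very-ampleness reduces to properties of the canonical embedding together with genericity of $p$, $D$ and $Z$; the bound $d \ge 3g-1$ (which forces $\deg D = 4g-5-d$ to be small) ensures the sub-bundle does not contribute unwanted extra sections. The extra hypothesis $\nu \ge 4$ in the $\nu$-gonal case (absent in the general-moduli case) is exactly what prevents a low-degree pencil from spoiling separation of points, and I would again phrase a potential failure as the existence of a linear series incompatible either with Lemma \ref{S2} or with the general-position theorem on a curve with general moduli.

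The main obstacle I anticipate is the diagonal/tangent case $p=q$ (separation of tangent vectors, i.e. $Z$ a length-$2$ non-reduced scheme), since there the generality of $Z$ cannot be invoked freely and one must control $h^0$ of the twist by a specific non-reduced subscheme supported at an arbitrary point. Handling this uniformly over all of $C$, rather than merely for general $Z$, is the delicate point: I expect to argue by upper-semicontinuity, reducing the non-reduced case to a limit of the reduced case while ensuring the cohomological jump predicted by Lemma \ref{S2} does not occur. A secondary difficulty is confirming that the generality of $N$ (resp.\ of $p$, $D$) in the parametrizing family genuinely propagates to generality of the twisted line bundles $N(-Z)$, $\omega_C\otimes A^\vee(-Z)$ for \emph{every} $Z$ simultaneously, which again I would manage through a semicontinuity and dimension-count argument combined with the numerical bounds $d+\nu \ge 3g+1$ and $d \ge 3g-1$.
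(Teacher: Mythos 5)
Your proposal is correct and follows essentially the same route as the paper: both arguments reduce very-ampleness of $\mathcal F$ to very-ampleness of the sub- and quotient line bundles in the defining extensions \eqref{exactB0} and \eqref{exactB1}, with $\omega_C\otimes A^{\vee}$ handled exactly as you describe (a failure of separation yields a $g^2_{\nu+2}$ via Riemann--Roch, excluded by Lemma \ref{S2}), $N$ handled by generality together with $\deg N = d-2g+2+\nu\ge g+3$, and the hypotheses $\nu\ge 4$, $d\ge 3g-1$ entering through the very-ampleness of $K_C-p$ and of $K_C-D$ for general $D$. Two caveats on your "bookkeeping": the conclusion that the drops add up to $4$ silently uses $h^0(\mathcal F)=h^0(\mathrm{sub})+h^0(\mathrm{quot})$ (equivalently, vanishing of the coboundary map), without which an extension of two very ample line bundles need not be very ample --- this holds here because $N$ is nonspecial in case (ii) and because $\partial_u=0$ by the very definition of $\mathcal W_1$ in case (i); and for non-reduced $Z$ upper-semicontinuity points the wrong way (it permits, rather than forbids, a jump at the limit), but no limiting argument is needed since the linear-series obstructions you invoke (Lemma \ref{S2}, the gonality bound, the Brill--Noether dimension counts on the loci of bad $D$) already exclude the jump uniformly for every length-$2$ subscheme, reduced or not.
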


\begin{proof} (i) When $C$ is a general $\nu$-gonal curve, for $d$ and $\nu$ as in \eqref{eq:ourbounds}, the strategy of \cite[Lemma\;3.7(c)]{CFK} extends to \eqref{eq:ourbounds} for 
$\nu \geq 4$. Indeed, observe $K_C-p$ is very ample as it follows by the Riemann-Roch theorem;
\color{black} moreover, as in \cite[Claim\;3.8]{CFK}, for general $D \in W^0_{4g-5-d}$, $K_C-D$ is very ample if 
$\nu \ge 4$  and $d \ge 3g-1$. Here we remark that  the condition $d \ge 3g-1$ was crucially used in the proof of the claim.  \color{black}
%
%
%
Thus, as $\mathcal F \in B_{\rm reg}$ general fits in \eqref{exactB0}, part $(i)$ is proved in this case. 

When otherwise $C$ is with general moduli, $K_C-p$ is very ample. Since $\deg\; K_C-D = d-2g+3$ and it is of speciality $1$, then $h^0(K_C-D) = d-3g+5$ which is very ample as soon as the latter quantity is at least $4$.

\noindent
(ii) This part extends the proof of \cite[Lemma\;3.3(c)]{CFK} to \eqref{eq:ourbounds}. Observe first that $K_C-A$ is very ample: 
if not, Riemann-Roch theorem would give the existence of a $g^2_{\nu+2}$ on $C$, which is not allowed by Lemma \ref{S2} above.
At the same time, since $\deg(N)=d-2g+2+\nu \geq g +3$ for $d+\nu \ge 3g+1$, a general $N$ is very ample too. Thus $\mathcal F_u$ as in \eqref{exactB1} is very ample too. 
\end{proof}


\section{Hilbert schemes of surface scrolls}\label{S:Hilbert} In this section, we consider some natural consequences of Theorems \ref{thm:main2} and \ref{thm:veryampleness} 
to Hilbert schemes of surface scrolls in projective spaces. Precisely, with assumptions as in Theorem \ref{thm:veryampleness}, a general $\mathcal F \in B_{\rm reg}$ (respectively,  $\mathcal F \in B_{\rm sup}$) gives 
rise to the projective bundle $\Pp(\Ff)\stackrel{\rho}{\to} C$ ($\rho$ is the fiber-map), which is embedded via  $\vert \OO_{\Pp(\Ff)}(1)\vert$ as a smooth scroll $S$ of degree $d$, sectional genus $g$ and which is linearly normal in 
 the projective space $\Pp^{k_2-1}= \Pp^{d-2g+3}$,  as $k_2= d-2g+4$. We will say that the pair $(C, \Ff)$ {\em determines} $S$, equivalently that $S$ is {\em associated to} 
 $(C, \Ff)$.  
 
 In any of the above cases, the scroll $S$ is {\em stable}, since $\mathcal F$ is, and it is {\em special},  
 since $$h^1(S, \OO_S(1)) = h^1(\Pp(\Ff), \OO_{\Pp(\Ff)}(1)) = h^1(C,\mathcal F) =2.$$Let $P_S(t) \in \mathbb{Q}[t]$ be the Hilbert polynomial of $S$ and let 
 $$\mathcal H_{d,g,k_2-1}$$be the union of components of the Hilbert scheme, parametrizing closed 
 subschemes in $\Pp^{k_2-1}$, having Hilbert polynomial $P_S(t)$, whose general point corresponds to a smooth, linearly normal surface scroll in $\Pp^{k_2-1}$.

\begin{proposition}\label{prop:Normal} If $\N_{S/\Pp^{k_2-1}}$ denotes the normal bundle of $S$ in $\Pp^{k_2-1}$, then: 
\begin{equation}\label{eq:tgS3bis}
\chi(S,\N_{S/\Pp^{k_2-1}}) = 7g-7 + k_2(k_2-2) = 7g-7 + (d-2g+4) (d-2g+2).
\end{equation}In particular, for any irreducible component $\mathcal H$ of $\mathcal H_{d,g,k_2-1}$, one has 
$$\dim \; \mathcal H \geq \chi(S,\N_{S/\Pp^{k_2-1}}) = 7g-7 + k_2(k_2-2),$$where the latter is the so called {\em expected dimension} of the Hilbert scheme. 
\end{proposition}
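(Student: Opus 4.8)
The plan is to prove the two assertions separately: the Euler--characteristic identity \eqref{eq:tgS3bis}, which is a Riemann--Roch computation on the scroll $S=\mathbb{P}(\mathcal{F})$, and the dimension inequality, which follows from the standard deformation theory of the Hilbert scheme once the cohomology of $\mathcal{N}:=\mathcal{N}_{S/\mathbb{P}^{k_2-1}}$ is controlled.

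For the first part, I would exploit the two fundamental exact sequences on $S\subset\mathbb{P}^{k_2-1}$: the normal bundle sequence $0\to T_S\to T_{\mathbb{P}^{k_2-1}}|_S\to\mathcal{N}\to 0$ and the restricted Euler sequence $0\to\mathcal{O}_S\to\mathcal{O}_S(1)^{\oplus k_2}\to T_{\mathbb{P}^{k_2-1}}|_S\to 0$ (note $(k_2-1)+1=k_2$). Additivity of $\chi$ then yields $\chi(\mathcal{N})=k_2\,\chi(\mathcal{O}_S(1))-\chi(\mathcal{O}_S)-\chi(T_S)$, reducing the problem to three computable Euler characteristics. Writing $\rho\colon S\to C$ for the structure morphism and using the projection formula ($\rho_*\mathcal{O}_S(1)=\mathcal{F}$, $R^1\rho_*\mathcal{O}_S(1)=0$), the first term is $\chi(\mathcal{O}_S(1))=\chi(C,\mathcal{F})=d-2g+2=k_2-2$ by Riemann--Roch for the rank $2$ bundle $\mathcal{F}$; the second is $\chi(\mathcal{O}_S)=1-g$, since $S$ is a $\mathbb{P}^1$-bundle over $C$ (so $\rho_*\mathcal{O}_S=\mathcal{O}_C$, $R^1\rho_*\mathcal{O}_S=0$). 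The remaining term $\chi(T_S)$ I would compute by Hirzebruch--Riemann--Roch on the surface, inserting the Chern numbers of a geometrically ruled surface over a genus $g$ curve, namely $c_2(S)=e(S)=4(1-g)$ and $K_S^2=8(1-g)$ (the latter from Noether's formula); this gives $\chi(T_S)=6(1-g)$. Assembling the three pieces produces $\chi(\mathcal{N})=k_2(k_2-2)-(1-g)-6(1-g)=7g-7+k_2(k_2-2)$, as claimed.

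For the dimension inequality I would invoke the standard fact that at $[S]$ the Hilbert scheme has Zariski tangent space $H^0(\mathcal{N})$ with obstructions in $H^1(\mathcal{N})$, so every component through $[S]$ satisfies $\dim\mathcal{H}\geq h^0(\mathcal{N})-h^1(\mathcal{N})$. Since $S$ is a surface, $\chi(\mathcal{N})=h^0(\mathcal{N})-h^1(\mathcal{N})+h^2(\mathcal{N})$, so it remains to verify $h^2(\mathcal{N})=0$ in order to identify the genuine lower bound with $\chi(\mathcal{N})$. By Serre duality $h^2(\mathcal{N})=h^0(\mathcal{N}^\vee\otimes\omega_S)$; dualizing the two sequences above and tensoring by $\omega_S$ gives $h^0(\mathcal{N}^\vee\otimes\omega_S)\leq h^0(\Omega_{\mathbb{P}^{k_2-1}}|_S\otimes\omega_S)\leq h^0(\omega_S(-1)^{\oplus k_2})$. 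Using $\omega_S=\mathcal{O}_S(-2)\otimes\rho^*(\det\mathcal{F}\otimes\omega_C)$ and $\rho_*\mathcal{O}_S(-3)=0$ one gets $h^0(\omega_S(-1))=0$, whence $h^2(\mathcal{N})=0$ and therefore $\dim\mathcal{H}\geq\chi(\mathcal{N})$.

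Routine bookkeeping aside, the main obstacle I anticipate is the $\chi(T_S)$ computation: one must correctly record the Chern numbers of the ruled surface and carefully apply Hirzebruch--Riemann--Roch, since any slip there propagates directly into the final constant $7g-7$. A secondary, more conceptual point is the vanishing $h^2(\mathcal{N})=0$, which is precisely what upgrades the rigorous bound $h^0(\mathcal{N})-h^1(\mathcal{N})$ to the stated $\chi(\mathcal{N})$; if one simply \emph{defines} the expected dimension to be $\chi(\mathcal{N})$, this vanishing is what shows the convention agrees with the true deformation-theoretic lower bound.
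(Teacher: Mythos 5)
Your proposal is correct and follows essentially the same route as the paper: both arguments combine the restricted Euler sequence and the normal bundle sequence with additivity of $\chi$, use $\chi(\mathcal{O}_S(H))=\chi(C,\mathcal F)=k_2-2$ via the projection formula, and reduce the dimension bound to the vanishing $h^2(\N_{S/\Pp^{k_2-1}})=0$ before citing the standard deformation-theoretic inequality. The only (cosmetic) divergence is that you obtain $\chi(\T_S)=6(1-g)$ by Hirzebruch--Riemann--Roch with the Chern numbers of a geometrically ruled surface, whereas the paper derives the same well-known identity from the relative tangent sequence $0\to\T_{rel}\to\T_S\to\rho^*(\T_C)\to0$ with $\T_{rel}\cong\Oc_S(2H-\rho^*\det\mathcal F)$.
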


\begin{proof} The {\em Euler's  sequence} restricted to $S$ is 
\begin{equation}\label{eq:EulerS}
0 \to \Oc_S \to H^0(\Oc_S(H))^{\vee} \otimes \Oc_S(H) \to \T_{\Pp^{k_2-1}}|_S \to 0.
\end{equation}Moreover, one has also the {\em normal bundle sequence}
\begin{equation}\label{eq:tang}
0 \to \T_S \to \T_{\Pp^{k_2-1}}|_S \to \N_{S/\Pp^{k_2-1}} \to 0,
\end{equation}where $\T_S$ denotes the tangent bundle of $S$. 

Since $S$ is a scroll of genus $g$, we have
\begin{equation}\label{eq:tgS}
\chi(\mathcal O_S) = 1-g, \quad  \chi(\T_S) = 6 - 6g
\end{equation}\color{black} (the latter equality is well--known. It can be easily computed: by using 
the structural scroll--morphism 
$ S \cong \mathbb{P}(\mathcal F) \stackrel{\rho}{\longrightarrow} C$ and the standard 
scroll exact sequence $0 \to \T_{rel} \to \T_S \to \rho^*(\T_C) \to 0$, where $\T_{rel}$ is the {\em relative tangent sheaf}; from the above sequence and the fact that $S$ is a scroll, one gets $\T_{rel} = \omega_S^{\vee} \otimes \rho^* (\omega_C) \cong \Oc_S(2 H - \rho^*(\det {\mathcal F})) $,  and so  $\chi(S,\T_S) = \chi (S,\T_{rel}) + \chi (S,\rho^*(\T_C)) = \chi(S, \Oc_S(2 H - \rho^*(\det {\mathcal F}) )) + \chi (C, \T_C)=2(3-3g)$). \color{black}

From  Euler's sequence above, we get
\begin{equation}\label{eq:tgS2}
\chi(\T_{\Pp^{k_2-1}}|_S) = k_2 (k_2-2) + g-1,
\end{equation} \color{black} since $\chi(S, \Oc_S(H)) = \chi (C, \mathcal F) = d-2g+2 = k_2-2$ as it follows from the fact that 
$S \cong \mathbb{P}(\mathcal F)$ is a scroll, from Leray's isomorphism and projection formula. \color{black}

Thus, from \eqref{eq:tang}, we get
$$\chi(S, \N_{S/\Pp^{k_2-1}}) =7(g-1) + k_2(k_2-2)$$as in \eqref{eq:tgS3bis}.

The last assertion \color{black} in the statement of Proposition \ref{prop:Normal} \color{black} is a consequence of \cite[Corollary 3.2.7]{Ser} and the fact that $h^2(\N_{S/\Pp^{k_2-1}})=0$, as it follows from $h^2(\Oc_S(H))=0$, \eqref{eq:EulerS} and \eqref{eq:tang}. 
\end{proof}

With this set--up, the aim of this section is to prove Theorem \ref{thm:Hilb}. This will be done in the following subsections.


\subsection{The components $\mathcal H_{{\rm sup},\nu}$'s}\label{ss:Hsup} In the following section we will give the proof of Theorem \ref{thm:Hilb} (ii). 
We start giving a parametric construction of the components $\mathcal H_{{\rm sup},\nu}$'s for every possible $(d,g,\nu)$ arising from  
\eqref{eq:ourbounds} and conditions in Theorems \ref{thm:Hilb} (ii) and \ref{thm:veryampleness}\;(ii).

To this aim, consider:  
\begin{itemize}
\item $C \in \mathcal M^1_{g,\nu}$ general
\item $\Ff \in B_{\rm sup}$ general on $C$
\item $\Phi \in {\rm PGL}(k_2, \mathbb{C}) = {\rm Aut}(\Pp^{k_2-1})$.
\end{itemize}The triple $(C, \Ff, \Phi)$ determines the smooth scroll $\Phi(S) \subset \Pp^{k_2-1}$, where $S$ is associated to $(C, \Ff)$. 

For each triple $(d,g,\nu)$, scrolls $\Phi(S)$ as above fill--up an irreducible subset $\mathcal X_{\nu}$ of $\mathcal H_{d,g,k_2-1}$, as $\mathcal M^1_{g,\nu}$, $B_{\rm sup}$ on $C$ and  ${\rm PGL}(k_2, \mathbb{C})$ are all 
irreducible. Therefore, $\mathcal X_{\nu}$ is contained in (at least) one irreducible component of $\mathcal H_{d,g,k_2-1}$; any such 
irreducible component dominates $\mathcal M^1_{g,\nu}$ (as $\mathcal X_{\nu}$ does, by construction) and has dimension at least $\dim\;\mathcal X_{\nu}$. 

Thanks to the parametric representation of $\mathcal X_{\nu}$, we can easily compute its dimension.

\begin{proposition}\label{cl:dimX} $\dim\; \mathcal X_{\nu} = 8g-d-12 + k_2^2 = 8g-d - 12 + (d - 2g + 4)^2.$ In particular, if  $d\leq 3g-4$, then the closure of $\mathcal X_{\nu} $ cannot be an irreducible component of $\mathcal H_ {d,g,k_2 -1}$. 
\end{proposition}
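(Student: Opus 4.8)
The plan is to obtain $\dim \mathcal X_\nu$ by adding the dimensions of the three irreducible factors entering the parametric construction of $\mathcal X_\nu$ and then checking that the natural map sending a triple $(C,\mathcal F,\Phi)$ to the embedded scroll $\Phi(S)$ is generically finite onto $\mathcal X_\nu$. First I would record the three contributions: $\dim \mathcal M^1_{g,\nu} = 2g+2\nu-5$, the dimension of the $\nu$-gonal stratum (which is proper of the expected codimension $g-2\nu+2$, since $\nu \le \frac{g}{2}$ by \eqref{eq:ourbounds}); $\dim B_{\rm sup} = 6g-6-2\nu-d$, as established in \S\;\ref{ss:superabundant}; and $\dim {\rm PGL}(k_2,\mathbb{C}) = k_2^2-1$. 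Their sum equals $8g-d-12+k_2^2$, which is the asserted value, provided the parametrization is generically finite.

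The crucial step is this generic finiteness. I would argue that $(C,\mathcal F)$ is reconstructed intrinsically from the linearly normal embedded scroll $\Phi(S)\subset \mathbb{P}^{k_2-1}$: the curve $C$ is the base of the (intrinsic) ruling, and since the hyperplane class is the polarization $H=\mathcal O_{\Phi(S)}(1)$, the bundle is recovered up to isomorphism as $\mathcal F \cong \rho_*\mathcal O_{\Phi(S)}(H)$, with $\rho$ the scroll map. Hence the fiber of the parametrization over a general point is, up to the finite choice of an isomorphism $(C,\mathcal F)\cong(C_0,\mathcal F_0)$, a torsor under the group of projective automorphisms ${\rm Aut}(C,\mathcal F,H)\subset {\rm PGL}(k_2,\mathbb{C})$. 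This group is finite: every such automorphism preserves the ruling, hence descends to ${\rm Aut}(C)$, which is finite for $g\ge 2$ by the Hurwitz bound, and over a fixed element of ${\rm Aut}(C)$ the lifts form a torsor under ${\rm Aut}(\mathcal F)=\mathbb{C}^*$, acting trivially on $\mathbb{P}(\mathcal F)$ because $\mathcal F$ is stable by Theorem \ref{thm:main2}(ii). Therefore the ${\rm PGL}$-orbit of $S$ has full dimension $k_2^2-1$, the parametrization is generically finite, and $\dim \mathcal X_\nu = 8g-d-12+k_2^2$.

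For the final assertion I would compare this value with the expected dimension of the Hilbert scheme. By Proposition \ref{prop:Normal}, every irreducible component $\mathcal H$ of $\mathcal H_{d,g,k_2-1}$ has $\dim \mathcal H \ge \chi(\mathcal N_{S/\mathbb{P}^{k_2-1}}) = 7g-7+k_2(k_2-2)$. A direct computation, using $k_2=d-2g+4$, gives
\[
\dim \mathcal X_\nu - \left(7g-7+k_2(k_2-2)\right) = (8g-d-12+k_2^2)-(7g-7+k_2^2-2k_2) = d-3g+3.
\]
Thus for $d\le 3g-4$ this difference is strictly negative, so $\dim \overline{\mathcal X_\nu} < \chi(\mathcal N_{S/\mathbb{P}^{k_2-1}}) \le \dim \mathcal H$ for any component $\mathcal H$ containing $\overline{\mathcal X_\nu}$; hence $\overline{\mathcal X_\nu}$ is properly contained in such a component and cannot itself be an irreducible component of $\mathcal H_{d,g,k_2-1}$.

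I expect the main obstacle to be the generic finiteness claim — precisely, controlling the projective automorphisms of the general scroll to guarantee a full-dimensional ${\rm PGL}$-orbit together with the intrinsic recovery of $(C,\mathcal F)$; the dimension bookkeeping and the numerical comparison are then routine.
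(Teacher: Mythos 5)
Your proposal is correct and follows essentially the same route as the paper: a parametric dimension count $\dim\mathcal M^1_{g,\nu}+\dim B_{\rm sup}+\dim{\rm PGL}(k_2,\mathbb C)$ made rigorous by generic finiteness of the fibers (via finiteness of ${\rm Aut}(C)$ and simplicity of the stable bundle $\mathcal F$), followed by comparison with $\chi(\mathcal N_{S/\Pp^{k_2-1}})$ from Proposition \ref{prop:Normal}. The only slight imprecision is that the lifts of ${\rm Id}_C$ to automorphisms of $\Pp(\mathcal F)$ over $C$ need not all come from ${\rm Aut}(\mathcal F)=\mathbb C^*$ — by Maruyama's lemma they form an extension of a finite group $\Delta$ of $2$-torsion twists by ${\rm Aut}(\mathcal F)/\mathbb C^*$ — but this group is still finite, so your conclusion stands.
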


\begin{proof} Let $(C, \Ff, \Phi)$ and $(C', \Ff', \Phi')$ be two triples such that $\Phi(S) = \Phi'(S')$. Since 
$\Phi$ and $\Phi'$ are both projective transformations, the previous equality implies $S' = ((\Phi')^{-1} \circ \Phi)(S)$, i.e. $S$ and $S'$ are projectively equivalent via $\Psi:= ((\Phi')^{-1} \circ \Phi)$ and the 
triples $(C, \Ff, {\rm Id})$  $(C', \Ff', \Psi)$ map to the same point in $\mathcal X_{\nu}$. 

This, in particular, implies that the abstract ruled surfaces $\Pp(\Ff)$ on $C$ and $\Pp(\Ff')$ on $C'$ are isomorphic via $\Psi$. Thus, 
$\Psi|_C: C \to C'$ has to be an isomorphism, i.e. $C$ and $C'$ corresponds to the same point of  $\mathcal M^1_{g,\nu}$ and $\Psi|_C \in {\rm Aut}(C)$. 
On the other hand, since $C \in \mathcal M^1_{g,\nu}$ is general, with $\nu \geq 3$, one has ${\rm Aut}(C) = \{Id_C\}$ (cf.\;computations in\;\cite[pp.\;275-276]{GH}). 
Therefore, with notation as in \cite{Ma}, $\Psi \in  {\rm Aut}_C(\Pp(\Ff))$, which is the subgroup of ${\rm Aut}(\Pp(\Ff))$ of automorphisms of $\Pp(\Ff)$ over $C$ 
(i.e. fixing $C$ pointwise). 

From \cite[Lemma\;3]{Ma}, one has the exact sequence of algebraic groups
$$\{Id\} \to \frac{{\rm Aut}(\Ff)}{\mathbb C^*} \to {\rm Aut}_C(\Pp(\Ff)) \to \Delta \to \{Id\}$$where $\Delta$ is a finite subgroup of the $2$-torsion part of ${\rm Pic}^0(C)$. 
Since $\Ff$ is stable, so simple (i.e. ${\rm Aut}(\Ff) \cong \mathbb C^*$), we deduce that ${\rm Aut}_C(\Pp(\Ff))$ is a finite group. 

This means that 
$$\dim\; \mathcal X_{\nu}= \dim \; \mathcal M^1_{g,\nu} + \dim \; B_{\rm sup} + \dim\; {\rm PGL}(k_2, \mathbb{C});$$the latter sum is: 
$$(2g+2\nu - 5) + (6g-d-2\nu -6) + (k_2^2-1) = 8g - d -12 + k_2^2 = 8g-d - 12 + (d - 2g + 4)^2.$$Notice moreover that, if $d=3g-5, \; 3g-4$, one has that 
$$\dim\; \mathcal X_{\nu} < \chi(\mathcal N_{S/\Pp^{k_2-1}}) = 7g-7 + k_2(k_2-2)$$as in \eqref{eq:tgS3bis}; this means that, in these cases, $\mathcal X_{\nu}$ cannot be a component of the Hilbert 
scheme as it follows by Proposition \ref{prop:Normal}.
\end{proof}

Let $[S] \in \mathcal X_{\nu} \subset \mathcal H_{d,g,k_2-1}$ be the point corresponding to the scroll $S \subset \Pp^{k_2-1}$; then $$T_{[S]} (\mathcal H_{d,g,k_2-1}) \cong H^0(S,\mathcal N_{S/\Pp^{k_2-1}}).$$

We first focus on the case  $d \geq 3g-3$ and prove that $\mathcal X_{\nu}$ fills--up a  component of $\mathcal H_{d,g,k_2-1}$ with properties as in Theorem \ref{thm:Hilb} (ii). To prove this, 
we are reduced to comput\color{black}ing \color{black} the cohomology of $\mathcal N_{S/\Pp^{k_2-1}}$ for $[S]$ a general point of $\mathcal X_{\nu}$. This will be done in the following proposition.

\begin{proposition}\label{prop:Normalsup} Let $S \subset \Pp^{k_2-1}$ be a smooth, linearly normal, special scroll which corresponds to a general 
point of  $\mathcal X _\nu$ as above for $d\geq 3g-3$. Then, one has:
\begin{itemize}
\item[(i)] $h^0( S, \N_{S/\Pp^{k_2-1}}) = 8g-d-12 + k_2^2 = 8g-d-12+(d-2g+4)^2$;
\item[(ii)] $h^1( S, \N_{S/\Pp^{k_2-1}}) = d - 3g + 3 $;
 \item[(iii)] $h^2( S, \N_{S/\Pp^{k_2-1}}) = 0$.
\end{itemize}
\end{proposition}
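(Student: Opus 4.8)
The plan is to reduce everything to a single computation of $h^0$. Indeed (iii) is already recorded in Proposition~\ref{prop:Normal}, and \eqref{eq:tgS3bis} fixes $\chi(\N_{S/\Pp^{k_2-1}})=7(g-1)+k_2(k_2-2)$; since $h^2=0$, one has $h^1=h^0-\chi$, and the elementary identity $(8g-d-12+k_2^2)-(7g-7+k_2(k_2-2))=d-3g+3$ shows that (ii) follows from (i). So I only need to prove $h^0(\N_{S/\Pp^{k_2-1}})=\dim\mathcal X_\nu=8g-d-12+k_2^2$ for $[S]$ general in $\mathcal X_\nu$. As $\mathcal X_\nu$ lies in some component $\mathcal H$ and $T_{[S]}\mathcal H\cong H^0(\N_{S/\Pp^{k_2-1}})$, the bound $h^0\ge \dim\mathcal X_\nu$ is automatic from Proposition~\ref{cl:dimX}; the content is the reverse inequality.

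To get it I would combine \eqref{eq:EulerS} and \eqref{eq:tang}, after computing the cohomology of $\T_S$ from the relative tangent sequence $0\to\T_{rel}\to\T_S\to\rho^*\T_C\to0$ of $\rho:S\cong\Pp(\Ff)\to C$. Since $\rho_*\T_{rel}\cong\mathcal{E}nd_0(\Ff)=\Sym^2\Ff\otimes(\det\Ff)^{-1}$, with $R^1\rho_*\T_{rel}=R^1\rho_*\rho^*\T_C=0$, all cohomology descends to $C$; stability of $\Ff$ gives $h^0(\mathcal{E}nd_0(\Ff))=0$, whence $h^\bullet(\T_S)=(0,\,6g-6,\,0)$, the middle term splitting as $h^1(\mathcal{E}nd_0(\Ff))=3g-3$ (bundle directions) plus $h^1(\T_C)=3g-3$ (moduli of $C$). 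Feeding this into \eqref{eq:tang}, and using $h^2(\T_{\Pp^{k_2-1}}|_S)=0$ (from \eqref{eq:EulerS} and $h^2(\Oc_S)=h^2(\Oc_S(H))=0$), produces
\[
0\to H^0(\T_{\Pp^{k_2-1}}|_S)\to H^0(\N_{S/\Pp^{k_2-1}})\to H^1(\T_S)\stackrel{\alpha}{\to}H^1(\T_{\Pp^{k_2-1}}|_S)\to H^1(\N_{S/\Pp^{k_2-1}})\to0,
\]
so that $h^0(\N_{S/\Pp^{k_2-1}})=h^0(\T_{\Pp^{k_2-1}}|_S)+\dim\ker\alpha$.

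The cohomology of $\T_{\Pp^{k_2-1}}|_S$ comes from \eqref{eq:EulerS} pushed to $C$, where $H^i(\Oc_S(H))=H^i(C,\Ff)$ and $H^1(\Oc_S)=H^1(\Oc_C)$. The relevant connecting map is Serre-dual to the trace of the Petri map, $\mu:H^0(\Ff)\otimes H^0(\omega_C\otimes\Ff^*)\to H^0(\omega_C)$. Using \eqref{exactB1} with $N$ general and non-special one has $H^0(\omega_C\otimes\Ff^*)\cong H^0(A)$, and the $H^0(N)$-part of $\mu$ vanishes (sections of $A\subset\omega_C\otimes\Ff^*$ annihilate $N\subset\Ff$); hence $\im\mu=\im\mu_{0,A}$ with $\mu_{0,A}$ as in \eqref{mu0}. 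By the base-point-free pencil trick $\ker\mu_{0,A}\cong H^0(\omega_C-2A)$, of dimension $g+2-2\nu$ by \eqref{eq:h12A}, and a dimension count shows $\mu_{0,A}$ surjects onto the $g$-dimensional $H^0(\omega_C)$; thus $\mu$ is surjective and its dual connecting map is injective. This yields $h^0(\T_{\Pp^{k_2-1}}|_S)=k_2^2-1$ (the $\dim{\rm PGL}(k_2)$ of ambient automorphisms) and $h^1(\T_{\Pp^{k_2-1}}|_S)=2k_2-g$.

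Everything therefore reduces to showing $\dim\ker\alpha=8g-d-11$, i.e. $\rank\alpha=d-2g+5$, and by the lower bound it suffices to prove $\dim\ker\alpha\le8g-d-11$; this is the main obstacle. Here $\ker\alpha=\im\!\big(H^0(\N_{S/\Pp^{k_2-1}})\to H^1(\T_S)\big)$ is the space of abstract first-order deformations of $S$ induced by embedded ones, which I would bound through the two projections coming from $0\to H^1(\mathcal{E}nd_0(\Ff))\to H^1(\T_S)\to H^1(\T_C)\to0$. The key geometric fact is that every scroll in $\mathcal H_{\rm sup,\nu}$ has speciality $2$ with bundle of $B_{\rm sup}$-type \eqref{exactB1} — it cannot degenerate to the $B_{\rm reg}$-type, which lives on the distinct component $\mathcal H_{\rm reg}$ — and such a special bundle forces the base to carry a $g^1_\nu$; hence the induced curve-deformations are tangent to $\mathcal M^1_{g,\nu}$ (dimension $2g+2\nu-5$) and the bundle ones tangent to $B_{\rm sup}$ (dimension $6g-6-2\nu-d$), summing to $8g-d-11$. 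Making this rigorous at the infinitesimal level amounts to identifying $\coker\alpha=H^1(\N_{S/\Pp^{k_2-1}})$ with the ``$N$-part'' $\ker\mu_{A,N}\cong H^0(N-A)$ of the Petri kernel of Claim~\ref{cl:ker}, of dimension $d-3g+3$ by the generality of $N$; this is exactly the same multiplication-map analysis, governed by the uniqueness of the $g^1_\nu$ and the absence of the forbidden series of Lemma~\ref{S2}, that produced the tangent-space bound for $B_{\rm sup}$ in Theorem~\ref{lem:i=2.1}. Once $\dim\ker\alpha=8g-d-11$ is secured, (i) holds, $\mathcal X_\nu$ fills up a generically smooth component, and (ii)–(iii) follow as above.
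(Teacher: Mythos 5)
Your reductions are sound: (iii) is indeed Proposition \ref{prop:Normal}, the identity $h^1=h^0-\chi$ makes (ii) equivalent to (i), the lower bound $h^0(\N_{S/\Pp^{k_2-1}})\geq\dim\mathcal X_\nu$ is automatic, and your computations $h^\bullet(\T_S)=(0,6g-6,0)$, $h^0(\T_{\Pp^{k_2-1}}|_S)=k_2^2-1$, $h^1(\T_{\Pp^{k_2-1}}|_S)=2k_2-g$ (via surjectivity of the trace of the Petri map, which does reduce to $\mu_{0,A}$ and the base-point-free pencil trick) are all correct. But the entire content of the proposition is the reverse inequality, i.e.\ $\dim\ker\alpha\le 8g-d-11$, and at that point your argument stops being a proof. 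The claim that ``the induced curve-deformations are tangent to $\mathcal M^1_{g,\nu}$ and the bundle ones tangent to $B_{\rm sup}$'' is a statement about first-order deformations, not about points of the component, and it cannot be deduced from the fact that every \emph{actual} scroll near $[S]$ is of $B_{\rm sup}$-type: the tangent space $H^0(\N_{S/\Pp^{k_2-1}})$ could a priori contain obstructed directions pointing out of the locus you describe. Indeed the paper's own results show this danger is real --- for $d=3g-4,\,3g-5$ the locus $B_{\rm sup}$ is non-reduced, so ``all nearby objects have this structure'' does not bound the tangent space. Your proposed fix, ``identifying $\coker\alpha$ with $\ker\mu_{A,N}\cong H^0(N-A)$,'' is exactly the statement that needs a construction and a proof; asserting that it ``amounts to the same multiplication-map analysis'' as Theorem \ref{lem:i=2.1} conflates the Petri computation on a \emph{fixed} curve with the coupled deformation of the pair $(C,\Ff)$ inside the Hilbert scheme, and no map realizing the identification is exhibited.

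For comparison, the paper closes this gap by computing $h^1(\N_{S/\Pp^{k_2-1}})$ directly rather than $h^0$: it restricts to the minimal unisecant $\Gamma$ corresponding to the quotient $\Ff\twoheadrightarrow K_C-A$, shows $h^1(\N_{S/\Pp^{k_2-1}}(-\Gamma))=h^2(\N_{S/\Pp^{k_2-1}}(-\Gamma))=0$ (using crucially that $N$ is non-special, so $h^1(\Oc_S(H-\Gamma))=h^1(N)=0$), and then analyzes
$$0\to\N_{\Gamma/S}\to\N_{\Gamma/\Pp^{k_2-1}}\to\N_{S/\Pp^{k_2-1}}|_\Gamma\to0,$$
proving that $H^1(\N_{\Gamma/S})\to H^1(\N_{\Gamma/\Pp^{k_2-1}})$ is injective because its dual is precisely $\mu_{A,N}$, which is surjective since $\ker\mu_{A,N}\cong H^0(N-A)$ has dimension $d-3g+3$. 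This is where the multiplication map $\mu_{A,N}$ genuinely enters, as a concrete coboundary map on $\Gamma$, not as a heuristic. If you want to salvage your $h^0$-first strategy, you would need to produce an explicit surjection $H^1(\T_{\Pp^{k_2-1}}|_S)\to H^0(N)^{\vee}\otimes H^1(K_C-A)$ killing $\im\alpha$ and compute its effect; as written, the step you yourself flag as ``the main obstacle'' is missing.
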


\begin{proof}  Observe that (iii) has already been proved in Proposition \ref{prop:Normal}. We moreover observed therein that 
$$\chi(S, \N_{S/\Pp^{k_2-1}}) = h^0(S, \N_{S/\Pp^{k_2-1}}) - h^1(S, \N_{S/\Pp^{k_2-1}})=
7(g-1) + k_2(k_2-2)$$as in \eqref{eq:tgS3bis}. Therefore, the rest of the proof is concentrated on  computing $h^1(S, \N_{S\vert \Pp^{k_2-1}})$.

Since $S \cong \Pp(\Ff)$ is a scroll corresponding to a general point of $\mathcal X$, then $\mathcal F$ corresponds to the general point of $B_{\rm sup}$ on $C$. 
Let $\Gamma$ be the unisecant of $S$ of degree $2g-2-\nu$ corresponding to \color{black} the \color{black} quotient line bundle $\Ff \to\!\!\!\!\! \to K_C-A$ as in \eqref{exactB1} (cf.\;\cite[Ch.\;V.\;Proposition\;2.6]{H}). 

\begin{claim}\label{cl:flam1502} One has $h^1(S, \N_{S /\Pp^{k_2-1}} (-\Gamma)) = h^2(S, \N_{S /\Pp^{k_2-1}} (-\Gamma))  = 0$, hence
\begin{equation}\label{eq:tgS15}
h^1(S, \N_{S /\Pp^{k_2-1}}) = h^1(\Gamma, \N_{S /\Pp^{k_2-1}}|_{\Gamma}).
\end{equation}
\end{claim}

\begin{proof}[Proof of Claim \ref{cl:flam1502}] Look at the exact
sequence
\[0 \to \N_{S /\Pp^{k_2-1}} (-\Gamma) \to  \N_{S /\Pp^{k_2-1}} \to
\N_{S /\Pp^{k_2-1}}|_{\Gamma} \to 0.\]
From \eqref{eq:tang} tensored by $\Oc_S(-\Gamma)$ we see that $h^2(S, \N_{S /\Pp^{k_2-1}} (-\Gamma)) = 0$
follows from $h^2(S,  \T_{\Pp^r}|_S (-\Gamma)) = 0$ which, by 
Euler's sequence restricted to $S$, follows from $h^2(S, \Oc_S(H - \Gamma)) =
h^0(S, \Oc_S(K_S - H + \Gamma)) = 0$, since $K_S - H + \Gamma$ intersects the ruling of $S$ negatively.

As for $h^1(S, \N_{S /\Pp^{k_2-1}} (-\Gamma))= 0$, this follows from $h^1(S,
\T_{\Pp^{k_2-1}}|_S (-\Gamma)) = h^2(S,  \T_S (-\Gamma)) = 0$. By  Euler's
sequence restricted to $S$, the first vanishing follows from $h^2(S, \Oc_S(-\Gamma)) =
h^1(S, \Oc_S(H-\Gamma))=0$. Since $K_S+\Gamma$ meets the ruling
negatively, one has $h^0(S, \Oc_S(K_S +\Gamma)) = h^2(S, \Oc_S(-\Gamma))
=0$. Moreover $h^1(S, \Oc_S(H-\Gamma)) = h^1(C, N)=0$, as it follows from 
\eqref{exactB1} and the fact that $N \in {\rm Pic}^{d-2g+2+\nu}(C)$ is non special, being general of its degree 
(cf. Theorem \ref{thm:main2}\;(ii)).

In order to prove $h^2(S,  \T_S (-\Gamma)) = 0$, consider the exact
sequence
\[0 \to \T_{rel} \to \T_S \to \rho^*(\T_C) \to 0\]arising from the structure morphism
$S\cong \Pp(\Ff) \stackrel{\rho}{\to} C$. The vanishing we need follows from 
$h^2(S,  \T_{rel} \otimes \Oc_S(-\Gamma)) = h^2 (S, \Oc_S(-\Gamma) \otimes \rho^*(\T_C)) = 0$: 
the first vanishing holds since $\T_{rel} \cong \Oc_S (2H - df)$, where $f = \rho^{-1}(q)$ is a ruling of $S$, therefore 
$\Oc_S(K_S + \Gamma) \otimes \T_{rel}^{*}$ restricts negatively to the ruling, so it cannot be effective. 
Similar considerations  yield the second vanishing $h^2 (S, \Oc_S(-\Gamma) \otimes \rho^*(\T_C)) = 0$.  
\end{proof}

Consider now the exact sequence
\begin{equation}\label{eq:B}
0 \to  \N_{\Gamma/S}  \stackrel{\alpha}{\longrightarrow}  \N_{\Gamma /\Pp^{k_2-1}} {\longrightarrow}
\N_{S / \Pp^{k_2-1}}|_{\Gamma}  \to 0.
\end{equation}

\begin{claim}\label{cl:flam2611} The map 
$$H^1(\Gamma, \N_{\Gamma/S}) \stackrel{H^1(\alpha)}{\longrightarrow}
H^1(\Gamma, \N_{\Gamma/\Pp^{k_2-1}})$$arising from \eqref{eq:B} is injective. 
\end{claim}

\begin{proof}[Proof of Claim \ref{cl:flam2611}] Consider $\Gamma \subset \langle \Gamma \rangle = \Pp^{g-\nu} \subset \Pp^{k_2-1}$,
where $\langle \Gamma \rangle$ denotes the linear span given by the section $\Gamma$ and where $\dim \; \langle \Gamma \rangle  := h^0(K_C-A) - 1 = h^1(A) - 1 = g-\nu$, as it follows from \eqref{eq:h12A}.  

From the inclusions $\Gamma \subset \Pp^{g-\nu} \subset \Pp^{k_2-1}$ we get the sequence
\begin{equation}\label{eq:tg*}
0 \to \N_{\Gamma/\Pp^{g-\nu}} \to \N_{\Gamma\vert \Pp^{k_2-1}} \to
\N_{\Pp^{g-\nu}/\Pp^{k_2-1}}|_{\Gamma} \to 0,
\end{equation}Take the Euler sequence of $\Pp^{g-\nu}$ restricted to $\Gamma$, i.e. 
$$0 \to \Oc_{\Gamma} \to  H^0(\Oc_{\Gamma}(1))^{\vee} \otimes \Oc_{\Gamma}(1) \cong (K_C-A)^{\oplus(g-\nu+1)} \to \T_{\Pp^{g-\nu}}|_{\Gamma} \to 0;$$ \color{black} taking cohomology and dualizing, \color{black} we get that 
$$H^1(\T_{\Pp^{g-\nu}}|_{\Gamma})^{\vee} \cong {\rm Ker}\left(H^0(K_C-A) \otimes H^0(A) \stackrel{\mu_{0,A}}{\longrightarrow} H^0(K_C)\right)$$as in \eqref{eq:kers}. Therefore, from \color{black}
\eqref{eq:kers} and \color{black} \eqref{eq:h12A} one gets
$$h^1(\T_{\Pp^{g-\nu}}|_{\Gamma}) = g + 2 - 2 \nu.$$Consider now the exact sequence defining the normal bundle of $\Gamma$ in its linear span: 
$$0 \to \T_{\Gamma} {\longrightarrow} \T_{\Pp^{g-\nu}}|_{\Gamma} {\longrightarrow} \N_{\Gamma/\Pp^{g-\nu}} \to 0;$$the associated coboundary map 
$H^0( \N_{\Gamma/\Pp^{g-\nu}}) \stackrel{\partial}{\longrightarrow} H^1 (\T_{\Gamma})$ identifies with the differential at the point $[\Gamma]$ of the natural map 
$$\Psi: {\rm Hilb}_{g,2g-2-\nu, g-\nu} \to \mathcal M_g,$$where ${\rm Hilb}_{g,2g-2-\nu, g-\nu}$ the Hilbert scheme of curves of genus $g$, degree $2g-2-\nu$ in $\Pp^{g-\nu}$. By construction, 
$$\dim\; {\rm coker}(d\Psi_{[\Gamma]}) = \dim \; \mathcal M_g - \dim \;\mathcal M^1_{g,\nu} = 3g-3 - (2g+2\nu -5) = g+2 - 2\nu = h^1(\T_{\Pp^{g-\nu}}|_{\Gamma}),$$i.e. 
the map 
$$H^1( \T_{\Gamma}) \stackrel{H^1(\lambda)}{\longrightarrow} H^1(\T_{\Pp^{g-\nu}}|_{\Gamma})$$is surjective. Since $h^2(\T_{\Gamma}) = 0$, this implies $h^1(\N_{\Gamma/\Pp^{g-\nu}}) = h^2(\N_{\Gamma/\Pp^{g-\nu}}) = 0$. Therefore, 
from \eqref{eq:tg*}, one has 
\begin{equation}\label{eq:normals1}
H^1(\N_{\Gamma/\Pp^{k_2-1}}) \cong H^1(\N_{\Pp^{g-\nu}/\Pp^{k_2-1}}|_{\Gamma}) = H^1(\Oc_{\Gamma}(1)^{\oplus(k_2-1 - g + \nu)}) \cong H^1((K_C-A)^{\oplus(k_2-1 - g + \nu)}).
\end{equation}

Since the scroll $S$ arises from $\Ff \in B_{\rm sup}$ general (on $C \in \mathcal M^1_{g,\nu}$ general), then $\Ff$ fits in \eqref{exactB1}, with $N$ general of its degree. In particular one has 
$$0 \to H^0(N) \to H^0(\Ff) \to H^0(K_C-A) \to 0.$$Therefore, one has also 
$$0\to H^0(C, K_C-A)^{\vee} \to H^0(C, \Ff)^{\vee} \to H^0(C, N)^{\vee} \to
0.$$Since $H^0(S, \Oc_S(1)) \cong H^0(C, \Ff)$ and $\Oc_{\Gamma}(1)\cong K_C-A$, the Euler sequences of the projective spaces 
$\Pp^{g-\nu}$ and $\Pp^{k_2-1}$ restricted to $\Gamma$ give the following commutative diagram:
\begin{displaymath}
\begin{array}{ccccccc}
      &             &     &     0                       &     &     0     &     \\
       &             &     &     \downarrow                        &     &     \downarrow      &     \\
0 \to & \Oc_{\Gamma} & \to & H^0(C, K_C-A)^{\vee} \otimes K_C-A
& \to & \T_{\Pp^{g-\nu}}|_{\Gamma} & \to 0  \\
      &      ||       &     &     \downarrow                        &     &     \downarrow      &     \\
0 \to & \Oc_{\Gamma} & \to & H^0(C, \Ff)^{\vee} \otimes
K_C-A & \to & \T_{\Pp^{k_2-1}}|_{\Gamma} & \to 0  \\
     &             &     &     \downarrow                        &     &     \downarrow      &     \\
 &  &  & H^0(C, N)^{\vee} \otimes K_C-A&\cong  & \N_{\Pp^{g-\nu}\vert \Pp^{k_2-1}}|_{\Gamma} & \\
          &             &     &     \downarrow                        &     &     \downarrow      &     \\
     &             &     &     0                       &     &     0     &
\end{array}
\end{displaymath}This shows in particular that 
$$\N_{\Pp^{g-\nu}\vert \Pp^{k_2-1}}|_{\Gamma} \cong H^0(C, N)^{\vee} \otimes K_C-A$$and so, in \eqref{eq:normals1}, one has more precisely 
$$H^1(\N_{\Gamma/\Pp^{k_2-1}}) \cong H^1(\N_{\Pp^{g-\nu}/\Pp^{k_2-1}}|_{\Gamma}) \cong H^0(N)^{\vee} \otimes h^1(K_C-A).$$

On the other hand, $\N_{\Gamma/S} \cong K_C-A-N$ (cf. \cite[Ch.\;V.\;Proposition\;2.6]{H}), so 
$$h^0(\Gamma, \N_{\Gamma/S})= 0, \; h^1(\Gamma, \N_{\Gamma/S}) = d- 3g+3 + 2 \nu.$$Therefore, if we take the map 
$$H^1(\Gamma, \N_{\Gamma/S}) \stackrel{H^1(\alpha)}{\longrightarrow} H^1(\Gamma, \N_{\Gamma/\Pp^{k_2-1}})$$arising from \eqref{eq:B}, this identifies with 
the natural map 
$$H^1(K_C-A-N) \stackrel{H^1(\alpha)}{\longrightarrow}  H^0(N)^{\vee} \otimes H^1(K_C-A)$$whose dual is 
$$H^0(N) \otimes H^0(A) \stackrel{H^1(\alpha)^{\vee}}{\longrightarrow}  H^0(N+A),$$i.e. $H^1(\alpha)^{\vee} = \mu_{A,N}$ as in \eqref{muA} is a natural multiplication map. 
Since $N$ is non special and by definition of $A$, one has 
$$h^0(N) = d - 3g + 3 + \nu, \; h^0(A) = 2;$$moreover, from \eqref{eq:kers}, one has 
$${\rm ker} (\mu_{A,N}) = h^0(N-A) = d - 3g + 3.$$Therefore, 
$$\dim {\rm Im}\;\mu_{A,N} = 2 (d - 3g + 3 + \nu) - \color{black} (\color{black}d - 3g + 3 \color{black}) \color{black}= d - 3g + 3 + 2 \nu = h^0(N+A),$$i.e. $\mu_{A,N} = H^1(\alpha)^{\vee}$ is surjective. This implies that 
$H^1(\alpha)$ in injective, as wanted. 
\end{proof}Considering once again \eqref{eq:B} and \eqref{eq:normals1}, the injectivity of $H^1(\alpha)$ and $h^2(\N_{\Gamma/S}) =0$ give 
$$h^1(\N_{S / \Pp^{k_2-1}}|_{\Gamma}) = h^1(\N_{\Gamma /\Pp^{k_2-1}}) - h^1(\N_{\Gamma/S}) = $$
$$2 (k_2 - 1 + g - \nu) - h^1(K_C-A-N) = 2 ((k_2 - 1 + g - \nu)) - (d + 2 \nu - 3 g + 3) = d - 3g + 3.$$From \eqref{eq:tgS15}, the (ii) of Proposition \ref{prop:Normalsup} 
follows and the proof is completed.
\end{proof}

\color{black}

To conclude the proof of Theorem \ref{thm:Hilb} (ii), for $d \geq 3g-3$, we need to show that $\mathcal X_{\nu}$ 
fills--up a dense subset of a unique component, say $\mathcal H_{{\rm sup},\nu}$, with all the properties mentioned therein.  
To deduce this, it suffices to observe first that 
$$ 8g-d-12 + k_2^2 = \dim\; \mathcal X_{\nu} \leq \dim \; T_{[S]} (\mathcal H_{d,g,k_2-1}) = h^0(S,\N_{S / \Pp^{k_2-1}})=  8g-d-12 + k_2^2,$$the latter equality following from 
Proposition \ref{prop:Normalsup} (i). Moreover, as 
$$8g-d-12 + k_2^2 = \chi(\N_{S / \Pp^{k_2-1}})+ d-3g+3,$$ it follows that the component  $\mathcal H_{{\rm sup},\nu}$ (arising as the closure of $\mathcal X_{\nu}$ in $\mathcal H_{d,g,k_2-1}$) 
is a {\em superabundant} (resp., {\em regular}) component of $\mathcal H_{d,g,k_2-1}$ for $d\geq 3g-2$ (resp. $d=3g-3$). By construction of $\mathcal H_{{\rm sup},\nu}$, it follows that it dominates 
$\mathcal M^1_{g,\nu}$. This implies that $\mathcal H_{{\rm sup},\nu} \neq \mathcal H_{{\rm sup},\nu'}$ for $\nu \neq \nu'$. Thus the proof of  Theorem \ref{thm:Hilb} (ii) is completed for $d \geq 3g-3$.

Now we take into account the cases $3g-5 \leq d \leq 3g-4$; recall that $\mathcal X_{\nu}$ has to be strictly contained in at least one irreducible component $\mathcal H$ of $\mathcal H_{d,g,k_2-1}$. To investigate such a component $\mathcal H$, 
we will use the following lemma. 

\begin{lemma}\label{lem_special} For $3g-5 \leq d \leq 3g-4$, assume that $\mathcal H$ is an irreducible component of $\mathcal H _{d,g, k_2 -1}$, whose general point corresponds to a smooth, stable scroll. 
Let  $\mathcal F _u$ be a rank 2 vector bundle associated to a general element of  $\mathcal H$, where $\mathcal F_u$ arises as an extension of the form 
\eqref{degree}, with $L$ necessarily special, on a suitable smooth curve $C$ of genus $g$. Then, one must have $h^1(L) =1$. 
 \end{lemma}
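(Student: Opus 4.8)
The plan is to rule out the alternative $h^1(L)=2$ by a dimension estimate, leaving $h^1(L)=1$ as the only possibility. First observe that since the general point of $\mathcal H$ is a smooth stable scroll $S\cong\Pp(\mathcal F)$, the bundle $\mathcal F$ is stable, hence semistable, and of speciality $h^1(\mathcal F)=2$; by Theorem \ref{CF} it fits in an extension \eqref{degree} with $L$ special. Taking cohomology in \eqref{degree} produces a surjection $H^1(\mathcal F)\twoheadrightarrow H^1(L)$, so $1\le h^1(L)\le 2$, the lower bound coming from the speciality of $L$. I would then assume $h^1(L)=2$ and contradict the bound $\dim\mathcal H\ge\chi(\N_{S/\Pp^{k_2-1}})$ supplied by Proposition \ref{prop:Normal}.

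Set $\delta:=\deg L$ and $M:=K_C-L$, so that $h^0(M)=h^1(L)=2$ and $\deg M=m=2g-2-\delta$. Semistability forces $\delta\ge d/2$, whence $m\le (g+1)/2<g$, so $M$ is a special pencil and the pairs $(C,M)$ with $h^0(M)=2$ fall under Brill--Noether theory. Writing $M=F+B$ with $F$ the base-point-free moving part (a $g^1_\gamma$ with $\gamma\le m$) and $B\in C^{(m-\gamma)}$, I would bound $\dim\{(C,M)\}\le (2g+2\gamma-5)+(m-\gamma)\le 2g+2m-5$, using $\dim\mathcal G^1_\gamma=2g+2\gamma-5$ (valid since $\gamma\le m<(g+2)/2$; cf.\;\cite{ACGH}). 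Over each such $(C,L)$ the remaining data are $N\in\pic^{d-\delta}(C)$ and $u\in\Pp(\ext^1(L,N))$; for general $N$ one has $L\ncong N$ and, by \eqref{eq:m}, $\dim\Pp(\ext^1(L,N))=2\delta-d+g-2$, so these contribute at most $g+(2\delta-d+g-2)$. Summing, the family of pairs $(C,\mathcal F)$ with $h^1(L)=2$ has dimension at most
$$(2g+2m-5)+g+(2\delta-d+g-2)=8g-11-d,$$
which is notably independent of $\delta$.

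Passing from $(C,\mathcal F)$ to the embedded scroll $S\subset\Pp^{k_2-1}$ adds only the choice of a projective frame, i.e.\ at most $\dim\mathrm{PGL}(k_2)=k_2^2-1$. Hence every scroll whose associated bundle has $h^1(L)=2$ lies in a locus of dimension at most $(8g-11-d)+(k_2^2-1)=8g-12-d+k_2^2$. Comparing with the expected dimension via \eqref{eq:tgS3bis},
$$\big(8g-12-d+k_2^2\big)-\chi(\N_{S/\Pp^{k_2-1}})=\big(8g-12-d+k_2^2\big)-\big(7g-7+k_2(k_2-2)\big)=d-3g+3,$$
which is $\le -1<0$ for $3g-5\le d\le 3g-4$. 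Since $\dim\mathcal H\ge\chi(\N_{S/\Pp^{k_2-1}})$ by Proposition \ref{prop:Normal}, the $h^1(L)=2$ locus is too small to contain a general point of $\mathcal H$, a contradiction, so $h^1(L)=1$.

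The hard part will be the dimension estimate for the $h^1(L)=2$ family: one must bound the pairs $(C,M)$ with $h^0(M)=2$ \emph{uniformly in} $\delta$, correctly absorbing the base divisor $B$ of $M$ and checking the degenerate case $L\cong N$ (possible only when $2\delta=d$, where the extension fibre jumps by one but the $N$-direction drops to dimension $0$, giving a strictly smaller count). The decisive feature is that the telescoping sum collapses to $8g-11-d$ with no dependence on $\delta$; once this uniform bound is secured, the comparison with $\chi(\N_{S/\Pp^{k_2-1}})$ is immediate and uses only $d\le 3g-4$, precisely the range of the lemma.
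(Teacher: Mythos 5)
Your proposal is correct and follows essentially the same route as the paper: assume $h^1(L)=2$, bound the locus of such scrolls by $\dim\{(C,K_C-L)\}+\dim\,{\rm Pic}^{d-\delta}(C)+\dim\mathbb P(\ext^1(L,N))+\dim{\rm PGL}(k_2)$, arrive at the same count $8g-d-12+k_2^2$, and contradict $\dim\mathcal H\ge\chi(\N_{S/\Pp^{k_2-1}})$ from Proposition \ref{prop:Normal} using $d\le 3g-4$. The only cosmetic difference is that you re-derive the bound $\dim\mathcal W^1_k\le 2g+2k-5$ via the base-locus decomposition and explicitly dispose of the case $L\cong N$, whereas the paper cites \cite[Proposition (6.8)]{ACG} directly and organizes the count as fibers of a map $\eta:\mathcal H_0\to{\mathcal P}ic^k(p)$.
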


\begin{proof} By the definition of $\mathcal H _{d,g, k_2 -1}$, the general point $[S]$ of $\mathcal H$ represents a smooth, linearly normal scroll $S$ in 
$\Pp^{k_2-1}$, i.e. it is of speciality exactly $2$; the scroll $S$ is associated to a degree $d$, very ample, rank 2 vector bundle $\mathcal F_u$ on a smooth curve $C$ of genus $g$. With a small abuse of notation, in what follows we will denote simply 
by $u \in \mathcal H$ the corresponding point $[S]$. 

From the fact that $\mathcal F_u$ is special and stable, by Theorem \ref{CF}  $\mathcal F_u$ arises as an extension \eqref{degree}.  
Suppose that $h^1 (L) > 1$, then one must have $h^1(L) =2$. Since $\mathcal F_u$ is stable with $d \geq 3g-5$ then, by \eqref{eq:neccond}, one has $\delta:= \deg\; L > \frac{3g-5}{2}$. Then 
$|K_C -L|$ is a $g^1_k$ with $k < \frac{g+1}{2}$,  where $k:=2g-2-\deg \; L$. 

Thus there exists an open dense subset $\mathcal H_0$ of $\mathcal H$ which admits a map:
$$\eta : \mathcal H_0 \to {\mathcal P}ic ^k (p)$$  given by $\eta (u) := K_C -L$,
where ${\mathcal P}ic ^k (p)$ is the relative Picard variety for  $p: \mathcal C \to S$ a suitable family of smooth curves of genus $g$.

By $h^1(L) =2$, the image of $\eta$ is included in $\mathcal W ^1_k$, where $\mathcal W ^1_k$ is a subvariety of ${\mathcal P}ic ^k (p)$  parameterizing pairs $(C, M)$ with $h^0 (M) \geq 2$.  
It is known that $dim \; \mathcal W ^1_k =2g +2k -5$ for $k < \frac{g+1}{2}$ (see \cite[Proposition (6.8)]{ACG}). The fiber of $\eta$ has dimension at most
$$\dim \; {\rm Pic}^{d-\delta} (C)+ \dim \; \mathbb P (\ext^1(L, N)) + \dim \;{\rm PGL}(k_2, \mathbb{C}) = 6g -d-2k-6+k_2 ^2 -1$$as it follows by \eqref{eq:m}. 
In sum, we get:  
$$\dim \; \mathcal H \leq (2g +2k-5) + (6g -d-2k-6 +k_2 ^2 -1) =8g-d-12 +k_2 ^2.$$
This cannot occur for $d\leq 3g-4$, since any irreducible component has dimension at least 
$\chi (\N_{S / \Pp^{k_2-1}})$ as in Proposition \ref{prop:Normal}.
\end{proof}

\begin{corollary}
If $d=3g-5, \; 3g-4$, $\mathcal X_{\nu}$ is strictly contained a component of $\mathcal H_{d,g,k_2-1}$ whose general point is associated to an extention \eqref{degree} with $h^1 (L)=1$ on a suitable smooth curve of genus $g$. 
\end{corollary}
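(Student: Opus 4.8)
The plan is to combine a single dimension comparison with the structural Lemma \ref{lem_special}. First I would establish the strict containment asserted in the statement. By Proposition \ref{cl:dimX}, one has $\dim\;\mathcal X_{\nu} = 8g-d-12+k_2^2$, whereas Proposition \ref{prop:Normal} guarantees that every irreducible component $\mathcal H$ of $\mathcal H_{d,g,k_2-1}$ satisfies $\dim\;\mathcal H \geq \chi(\mathcal N_{S/\Pp^{k_2-1}}) = 7g-7+k_2(k_2-2)$. For $d=3g-5,\;3g-4$ the comparison $\dim\;\mathcal X_{\nu} < \chi(\mathcal N_{S/\Pp^{k_2-1}})$ already recorded in the proof of Proposition \ref{cl:dimX} shows that $\overline{\mathcal X_{\nu}}$ cannot itself be a component; hence $\mathcal X_{\nu}$ is strictly contained in some irreducible component $\mathcal H$ of $\mathcal H_{d,g,k_2-1}$.

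Next I would verify that this ambient component $\mathcal H$ meets the hypotheses of Lemma \ref{lem_special}. By the very definition of $\mathcal H_{d,g,k_2-1}$, the general point of $\mathcal H$ corresponds to a smooth, linearly normal surface scroll $S\subset\Pp^{k_2-1}$ of speciality $2$, so the only additional requirement to check is \emph{stability} of $S$. Here I would invoke that stability of the rank $2$ bundle associated to a scroll is an open condition in families: the locus of $\mathcal H$ parametrizing stable scrolls is therefore open, and it is nonempty since it contains the points of $\mathcal X_{\nu}$, whose scrolls arise from stable bundles $\mathcal F\in B_{\rm sup}$ (cf.\;Lemma \ref{lem:i=2.2}(a)). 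By irreducibility of $\mathcal H$ this open locus is dense, so the general point of $\mathcal H$ does correspond to a smooth, stable scroll.

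Finally, letting $\mathcal F_u$ be the rank $2$ bundle associated to a general point of $\mathcal H$, its stability and speciality together with Theorem \ref{CF} present $\mathcal F_u$ as an extension of the form \eqref{degree} with $L$ necessarily special, on a suitable smooth curve of genus $g$. Lemma \ref{lem_special} then forces $h^1(L)=1$, which is exactly the assertion of the corollary. The genuinely substantive step is the one already carried out inside Lemma \ref{lem_special}, namely ruling out $h^1(L)=2$: that exclusion rests on bounding the dimension of the putative locus of scrolls over curves carrying a $g^1_k$ with $k<\frac{g+1}{2}$, using $\dim\;\mathcal W^1_k = 2g+2k-5$ and comparing the resulting estimate $8g-d-12+k_2^2$ against the lower bound $\chi(\mathcal N_{S/\Pp^{k_2-1}})$ from Proposition \ref{prop:Normal}. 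Granting that lemma, the remaining content of the corollary reduces to the elementary dimension count and the openness-of-stability argument above, so I expect no further obstacle.
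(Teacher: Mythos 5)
Your proposal is correct and follows exactly the route the paper intends: the corollary is stated as an immediate consequence of Proposition \ref{cl:dimX} (which rules out $\overline{\mathcal X_{\nu}}$ being a component for $d\leq 3g-4$ via the comparison with $\chi(\mathcal N_{S/\Pp^{k_2-1}})$) combined with Lemma \ref{lem_special}. Your extra step of checking the stability hypothesis of Lemma \ref{lem_special} by openness of stability in families, seeded by the stable bundles of $\mathcal X_{\nu}$, is a sensible filling-in of a detail the paper leaves implicit.
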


\subsection{The component $\mathcal H_{\rm reg}$}\label{ss:Hreg} As we did above for the components $\mathcal H_{{\rm sup},\nu}$'s, we first give a parametric construction of the component 
$\mathcal H_{\rm reg}$.

Take integers $d,\;g,\;\nu$ as in \eqref{eq:ourbounds} and in Theorem \ref{thm:veryampleness}\;(i). As observed therein, the construction of $B_{\rm reg}$, conducted in Sect.\;\ref{ss:regular} for $C \in \mathcal M^1_{g,\nu}$ general, 
holds {\em verbatim} for $C$ with general moduli and, in particular, it coincides with the (unique) component $\mathcal B$ of $B_d^{k_2}\cap U_C(2,d)^s$ as in the statement of 
Theorem \ref{TeixidorRes}; moreover, very-ampleness conditions in Theorem \ref{thm:veryampleness}\;(i) holds also for $C$ general. 

To construct $\mathcal H_{\rm reg}$, take therefore:    
\begin{itemize}
\item $C \in \mathcal M_g$ general
\item $\Ff \in B_{\rm reg}$ general on $C$
\item $\Phi \in {\rm PGL}(k_2, \mathbb{C}) = {\rm Aut}(\Pp^{k_2-1})$.
\end{itemize}As in the previous section, the triple $(C, \Ff, \Phi)$ determines the smooth scroll $\Phi(S) \subset \Pp^{k_2-1}$, where 
$S$ is associated to $(C, \Ff)$. Such scrolls $\Phi(S)$ fill-up an irreducible subset $\mathcal Y$ of $\mathcal H_{d,g,k_2-1}$, as $\mathcal M_g$, $B_{\rm reg}$ on $C$ and  ${\rm PGL}(k_2, \mathbb{C})$ are all 
irreducible. Therefore, $\mathcal Y$ is contained in (at least) one irreducible component of $\mathcal H_{d,g,k_2-1}$; any such 
component dominates $\mathcal M_g$ (as $\mathcal Y$ does, by construction) and it is of  
dimension at least $\dim\;\mathcal Y$. Moreover, since $ \mathcal M^1_{g,\nu} \subsetneq \mathcal M_g$ is also irreducible and, for $C' \in  \mathcal M^1_{g,\nu}$ 
general, $B_{\rm reg}$ on $C'$ is irreducible, of the same dimension as $B_{\rm reg}$ on $C$, the triples $(C', \Ff', \Phi)$, for $C' \in  \mathcal M^1_{g,\nu}$ 
general, $\Ff' \in B_{\rm reg}$ general on $C'$ and $\Phi \in {\rm PGL}(k_2, \mathbb{C})$ fill--up an irreducible, closed subset $\mathcal Y' \subsetneq \mathcal Y$, 
where $\mathcal Y'$ dominates $ \mathcal M^1_{g,\nu}$ (but not $\mathcal M_g$) by construction. 

Thanks to the parametric representation of $\mathcal Y$, reasoning as in the proof of Proposition \ref{cl:dimX}, one can easily compute $\dim\;\mathcal Y$, as 
${\rm Aut}(C) = \{Id_C\}$ for $C$ with general moduli. Thus, one gets: 
$$\dim\; \mathcal Y = \dim \; \mathcal M_{g} + \dim \; B_{\rm reg} + \dim\; {\rm PGL}(k_2, \mathbb{C});$$the latter quantity is 
$$(3g-3) + (8g-2d-11) + (k_2^2-1) = 11g - 2d -15 + k_2^2 = 11g - 2d - 15 + (d-2g+4)^2 = $$
$$= 11g - 2d - 15 + 2 (d-2g+4) + (d-2g+2)(d-2g+4)= 7g - 7 +(d-2g+4) (d-2g+2) = 7g - 7 + k_2 (k_2-2).$$

To prove that $\mathcal Y$ fills--up a dense subset of a unique component of $\mathcal H_{d,g,k_2-1}$, with properties as in Theorem \ref{thm:Hilb} (i), we are reduced to compute the cohomology of the normal bundle 
$\mathcal N_{S/\Pp^{k_2-1}}$ for $S$ corresponding to a general point of $\mathcal Y$. This will be done in the following:

\begin{proposition}\label{prop:Normalreg} Let $S \subset \Pp^{k_2-1}$ correspond to a general point of  
$\mathcal Y$ as above. Then, one has: 
\begin{itemize}
\item[(i)] $h^0( S, \N_{S/\Pp^{k_2-1}}) = 7g-7 + k_2(k_2-2) = 7g-7 + (d-2g+4) (d-2g+2)$;
\item[(ii)] $h^1( S, \N_{S/\Pp^{k_2-1}}) = 0$;
 \item[(iii)] $h^2( S, \N_{S/\Pp^{k_2-1}}) = 0$.
\end{itemize}
\end{proposition}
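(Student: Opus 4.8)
The plan is to follow the strategy of Proposition~\ref{prop:Normalsup} and reduce the whole statement to a single cohomology vanishing. Part (iii) has already been proved in Proposition~\ref{prop:Normal}, where we also obtained $\chi(S,\N_{S/\Pp^{k_2-1}})=7g-7+k_2(k_2-2)$ as in \eqref{eq:tgS3bis}; hence it suffices to show $h^1(S,\N_{S/\Pp^{k_2-1}})=0$, from which (ii) is immediate and (i) follows by the Euler characteristic. A general $\mathcal F\in B_{\rm reg}$ fits in \eqref{exactB0}, $0\to K_C-D\to \mathcal F\to K_C-p\to 0$, and by the construction in \S\ref{ss:regular} the associated coboundary vanishes, so that $h^0(\mathcal F)=h^0(K_C-D)+h^0(K_C-p)$ and $h^1(\mathcal F)=2$. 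I would take $\Gamma\subset S$ to be the unisecant attached to the minimal quotient $K_C-p$, so that $\Gamma\cong C$, $\Oc_S(H-\Gamma)\cong\rho^*(K_C-D)$, $\N_{\Gamma/S}\cong\Oc_C(D-p)$, and $\langle\Gamma\rangle=\Pp^{g-2}$ since $h^0(K_C-p)=g-1$.

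The first step is the analogue of Claim~\ref{cl:flam1502}: establish $h^1(S,\N_{S/\Pp^{k_2-1}}(-\Gamma))=h^2(S,\N_{S/\Pp^{k_2-1}}(-\Gamma))=0$, giving $h^1(S,\N_{S/\Pp^{k_2-1}})=h^1(\Gamma,\N_{S/\Pp^{k_2-1}}|_\Gamma)$. The $h^2$-vanishing goes through exactly as in the superabundant case, via the Euler and normal-bundle sequences twisted by $-\Gamma$ and the scroll sequence $0\to\T_{rel}\to\T_S\to\rho^*(\T_C)\to 0$, because the relevant divisors meet the ruling negatively. The genuinely new difficulty is $h^1(S,\N_{S/\Pp^{k_2-1}}(-\Gamma))=0$: since the maximal sub-bundle $K_C-D$ is \emph{special}, one has $h^1(S,\Oc_S(H-\Gamma))=h^1(C,K_C-D)=h^0(D)=1$, hence $h^1(S,\T_{\Pp^{k_2-1}}|_S(-\Gamma))=k_2\neq 0$, and the vanishing can no longer be read off the Euler sequence as in Claim~\ref{cl:flam1502}. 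Instead I would compute, by Leray and the projection formula, $H^i(S,\T_S(-\Gamma))\cong H^i(C,\mathcal F^\vee(K_C-D))\cong H^{1-i}(C,\mathcal F(D))^\vee$, and identify the connecting map $H^1(\T_S(-\Gamma))\to H^1(\T_{\Pp^{k_2-1}}|_S(-\Gamma))$ with the dual of the multiplication $H^0(\mathcal F)\otimes H^0(D)\to H^0(\mathcal F(D))$. As $h^0(D)=1$ this is tensoring by a single nonzero section, hence injective, so its dual is surjective and $h^1(S,\N_{S/\Pp^{k_2-1}}(-\Gamma))=0$.

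The second step is the analogue of Claim~\ref{cl:flam2611}, computing $h^1(\Gamma,\N_{S/\Pp^{k_2-1}}|_\Gamma)$ through $\Gamma\subset\Pp^{g-2}\subset\Pp^{k_2-1}$ and the sequence \eqref{eq:B}. Here the counterpart of the $h^1(\T_{\Pp^{g-\nu}}|_\Gamma)=g+2-2\nu$ computation is in fact simpler: restricting the Euler sequence of $\Pp^{g-2}$ to $\Gamma$ and dualizing identifies $H^1(\T_{\Pp^{g-2}}|_\Gamma)^\vee$ with $\ker\!\big(H^0(K_C-p)\otimes H^0(p)\to H^0(K_C)\big)$, which is $0$ because $h^0(p)=1$ makes this map the inclusion $H^0(K_C-p)\hookrightarrow H^0(K_C)$. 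Thus $h^1(\N_{\Gamma/\Pp^{g-2}})=h^2(\N_{\Gamma/\Pp^{g-2}})=0$, and from $0\to\N_{\Gamma/\Pp^{g-2}}\to\N_{\Gamma/\Pp^{k_2-1}}\to\N_{\Pp^{g-2}/\Pp^{k_2-1}}|_\Gamma\to 0$ one gets $h^1(\N_{\Gamma/\Pp^{k_2-1}})=h^1(\N_{\Pp^{g-2}/\Pp^{k_2-1}}|_\Gamma)$. Using the surjection $H^0(\mathcal F)\twoheadrightarrow H^0(K_C-p)$ I would identify $\N_{\Pp^{g-2}/\Pp^{k_2-1}}|_\Gamma\cong H^0(K_C-D)^\vee\otimes(K_C-p)$, so this group has dimension $k_2-g+1=d-3g+5$.

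Finally, in \eqref{eq:B} the map $H^1(\alpha)\colon H^1(\Gamma,\N_{\Gamma/S})\to H^1(\Gamma,\N_{\Gamma/\Pp^{k_2-1}})$ is dual to the multiplication $H^0(K_C-D)\otimes H^0(p)\to H^0(K_C-D+p)$, which — again because $h^0(p)=1$ — is the inclusion $H^0(K_C-D)\hookrightarrow H^0(K_C-D+p)$; both sides have dimension $k_2-g+1$, so it is an isomorphism, $H^1(\alpha)$ is an isomorphism, and $h^1(\Gamma,\N_{S/\Pp^{k_2-1}}|_\Gamma)=0$. Combining the two steps yields $h^1(S,\N_{S/\Pp^{k_2-1}})=0$, and with it (i) and (ii). The main obstacle is conceptual rather than computational: the speciality of the sub-bundle $K_C-D$ destroys the clean vanishing used in Claim~\ref{cl:flam1502}, and the remedy is to trace all the extra cohomology to multiplication-by-a-section maps, which are injective precisely because $h^0(D)=h^0(p)=1$.
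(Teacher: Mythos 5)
Your proof is correct, but it takes a genuinely different route from the paper's. The paper explicitly \emph{abandons} the restriction-to-$\Gamma$ strategy of Proposition \ref{prop:Normalsup} here, on the grounds that for the unisecant attached to $K_C-p$ one has $h^1(\Oc_S(H-\Gamma))=h^1(K_C-D)=1$, so the vanishings of Claim \ref{cl:flam1502} fail; instead it pushes the normal-bundle, tangent and Euler sequences down to $C$ via $\rho_*$, reduces (ii) to the surjectivity of $H^1(\T_S)\to H^1(\T_{\Pp^{k_2-1}}|_S)$, dualizes this to the injectivity of an explicit map $\psi_C^{\vee}=\mu_{0,D}\oplus\mu_{p,K_C-D}\oplus\mu_{D,K_C-p}\oplus(\iota\circ\mu_{0,p})$ checked on the split bundle $\mathcal F_0=(K_C-D)\oplus(K_C-p)$, and concludes by semicontinuity on $\mathcal W_1$. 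You instead salvage the $\Gamma$-strategy: you correctly identify that the only casualty is $h^1(\T_{\Pp^{k_2-1}}|_S(-\Gamma))$, which is now $k_2$ rather than $0$, and you repair the first step by computing $\rho_*\T_S(-\Gamma)\cong\mathcal F^{\vee}(K_C-D)$ (so $H^2(\T_S(-\Gamma))=0$ and $H^1(\T_S(-\Gamma))\cong H^0(\mathcal F(D))^{\vee}$) and identifying the relevant map with the dual of $H^0(\mathcal F)\otimes H^0(D)\to H^0(\mathcal F(D))$, injective since $h^0(D)=1$; in the second step the analogues of the paper's multiplication maps become isomorphisms because $h^0(p)=h^0(D)=1$ and $h^0(D-p)=0$ for general $D,p$ (the latter giving $h^0(K_C-D+p)=h^0(K_C-D)=d-3g+5$). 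I checked the identifications ($\Oc_S(H-\Gamma)\cong\rho^*(K_C-D)$, $\N_{\Gamma/S}\cong\Oc_C(D-p)$, $\langle\Gamma\rangle=\Pp^{g-2}$ via the vanishing of $\partial_u$ on $\mathcal W_1$, and the Serre-duality identification of the map induced by the coevaluation $\mathcal F^{\vee}\to H^0(\mathcal F)^{\vee}\otimes\Oc_C$ with the dual of the cup product) and they all hold. What each approach buys: yours works directly on the general non-split bundle, makes Propositions \ref{prop:Normalsup} and \ref{prop:Normalreg} uniform, and traces all the extra cohomology to multiplication-by-a-section maps; the paper's avoids the twisted computations at the cost of a specialization to $\mathcal F_0$ and a semicontinuity argument. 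One cosmetic remark: the map $H^1(\T_S(-\Gamma))\to H^1(\T_{\Pp^{k_2-1}}|_S(-\Gamma))$ is induced by the sheaf inclusion $\T_S\hookrightarrow\T_{\Pp^{k_2-1}}|_S$, not a connecting homomorphism, and your computation $h^1(\T_{\Pp^{k_2-1}}|_S(-\Gamma))=k_2$ tacitly uses $h^1(\Oc_S(-\Gamma))=h^2(\Oc_S(-\Gamma))=0$, which is true but worth stating.
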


\begin{proof} The proof of (iii) has already been given in Proposition \ref{prop:Normal}. Therefore, 
$\chi(\N_{S/\Pp^{k_2-1}}) = h^0(\N_{S/\Pp^{k_2-1}}) - h^1(\N_{S/\Pp^{k_2-1}} ) $ \color{black} is given in \color{black} \eqref{eq:tgS3bis}. \color{black} The proof is reduced to showing that \color{black} $h^1(S, \N_{S\vert \Pp^{k_2-1}}) =0$.

Since $S \cong \Pp(\Ff)$ corresponds to a general point of $\mathcal Y$, then $\mathcal F$ corresponds to a general point of $B_{\rm reg}$ on 
$C$ with general moduli.  To compute $h^1(S, \N_{S/\Pp^{k_2-1}})$, we therefore cannot proceed as in the proof of Proposition \ref{prop:Normalsup} 
(where we used the section of minimal degree $\Gamma$ corresponding to the quotient line bundle $K_C-A$ and the fact that $h^1(C,N) =0$ for $N$ as in \eqref{exactB1}). 
Indeed,  in the present case the section $\Gamma$ corresponds to the quotient line bundle $\Ff \to\!\!\!\!\! \to K_C-p$ as in \eqref{exactB0}, for which 
$h^1(K_C-D) = h^1(K_C-p) =1$. To sum--up, one cannot reason as in the previous case.

To this aim, consider the natural exact sequence on $S$: 
\begin{equation}\label{tgrel}
0 \to \T_{rel} \to \T_S \to \rho^*(\T_C) \to 0,
\end{equation}arising from the structure morphism $S\cong \Pp(\Ff) \stackrel{\rho}{\to} C$.

One has $h^2(\T_S) =0$, as it follows from  
\begin{equation}\label{tgrel2}
0 \to \rho_* (\T_{rel}) \to \rho_*(\T_S) \to \T_C \to 0,
\end{equation}obtained by push-forword \eqref{tgrel} on $C$, 
and from Leray's isomorphisms.

From the exact sequence defining the normal bundle: 
\begin{equation}\label{eq:normal}
0 \to \T_S \stackrel{\gamma_S}{\longrightarrow} \T_{\Pp^{k_2-1}}|_S  \to \N_{S/\Pp^{k_2-1}} \to 0
\end{equation}and the fact that $h^2(\T_S) =0$, one has: 
$$(*)\;\;\; h^1(\N_{S/\Pp^{k_2-1}}) =0 \;\; \Leftrightarrow \;\; H^1(\T_S) \stackrel{H^1(\gamma_S)}{\longrightarrow} H^1(\T_{\Pp^{k_2-1}}|_S) \;\; \mbox{is surjective};$$therefore, we are reduced 
to show\color{black}ing \color{black} that the map $H^1(\gamma_S)$ is a surjective map.

On the other hand, since \eqref{tgrel}, \eqref{tgrel2} and Leray's isomorphisms give 
$h^0(\rho^*(\T_C) ) = h^0(\T_C) = h^2(\T_{rel}) = h^2(\rho_* (\T_{rel}))= 0$, then one has  
$$H^1(\T_S) \cong H^1(\rho_*(\T_S)) = H^1(\rho_* (\T_{rel})) \oplus H^1(\T_C);$$moreover, from 
$K_S = - 2 H \otimes \rho^*(\omega_C \otimes \det (\mathcal F))$ (cf. \cite[Ch.\;V]{H}), one 
gets $\T_{rel} \cong \Oc_S ( 2 H \otimes \rho^*(\det (\mathcal F)^*))$ thus, by projection formula, 
$\rho_*(\T_{rel}) = Sym^2(\mathcal F) \otimes \det (\mathcal F)^*$. To sum up, one has: 
\begin{equation}\label{eq:isom1}
H^1(S, \T_S) \cong H^1\left(C, Sym^2(\mathcal F) \otimes \det (\mathcal F)^*\right) \oplus H^1\left(C, \T_C\right). 
\end{equation} 

Similarly, the Euler sequence of $\mathbb{P}^{k_2-1}$ restricted to $S$ reads:  
\begin{equation}\label{eq:EulerSS}
0 \to \Oc_S \to H^0(\mathcal F)^{\vee} \otimes \Oc_S(H) \stackrel{\tau_S}{\longrightarrow} \T_{\Pp^{k_2-1}}|_S \to 0,
\end{equation}as it follows by the definition of $\Oc_S(H)$ and the fact that $S \subset \mathbb{P}^{k_2-1}$ is linearly normal. 
Applying $\rho_*$ to \eqref{eq:EulerSS}, one has: 
\begin{equation}\label{eq:EulerC}
0 \to \Oc_C \to H^0(\mathcal F)^{\vee} \otimes \mathcal F \stackrel{\rho_*(\tau_S)}{\longrightarrow} \rho_*(\T_{\Pp^{k_2-1}}|_S) \to 0,
\end{equation}with $H^i(S, \T_{\Pp^{k_2-1}}|_S) \cong H^i(C, \rho_*(\T_{\Pp^{k_2-1}}|_S))$, for $i \geq 0$. 

Since the above identifications have been all obtained by using \eqref{tgrel} and \eqref{eq:EulerSS}, which are both compatible with 
\eqref{eq:normal}, then one has: 
{\small 
$$(**)\;\;\; h^1(\N_{S/\Pp^{k_2-1}}) =0 \;\; \Leftrightarrow \;\; 
H^1\left(C, Sym^2(\mathcal F) \otimes \det (\mathcal F)^*\right) \oplus H^1\left(C, \T_C\right) \stackrel{H^1(\rho_*(\gamma_S))}{\longrightarrow} H^1(\rho_*(\T_{\Pp^{k_2-1}}|_S)) \to 0.$$
}

From \eqref{eq:EulerSS} and $h^2(\Oc_S)=0$, one has 
$$H^0(\mathcal F)^{\vee} \otimes H^1(\Oc_S(H)) \stackrel{H^1(\tau_S)}{\longrightarrow} H^1(\T_{\Pp^{k_2-1}}|_S) \to 0$$and, as above 
$H^1(\tau_S)$ identifies with the surjective map
\begin{equation}\label{eq:surjectivity1}
H^0(\mathcal F)^{\vee} \otimes H^1(\mathcal F) \stackrel{\tiny H^1(\rho_*(\tau_S))}{\longrightarrow} H^1(\rho_*(\T_{\Pp^{k_2-1}}|_S)) \to 0
\end{equation} Therefore, to show the surjectivity of $H^1(\rho_*(\gamma_S))$ as in $(**)$, it suffices to show there exists a natural surjective map 
\begin{equation}\label{eq:surjectivity2}
H^1\left(C, Sym^2(\mathcal F) \otimes \det (\mathcal F)^*\right) \oplus H^1\left(C, \T_C\right)  
\stackrel{\psi_C}{\longrightarrow}  H^0(\mathcal F)^{\vee} \otimes H^1(\mathcal F)         
\end{equation} compatible with the maps in the previous diagrams. 

By duality, this is equivalent to prove the existence of an injective map 
\begin{equation}\label{eq:injectivity1}
H^0(\mathcal F) \otimes H^0(\omega_C \otimes \mathcal F^*) \stackrel{\psi^{\vee}_C}{\hookrightarrow}
H^0\left(C, \omega_C \otimes Sym^2(\mathcal F^*) \otimes \det (\mathcal F) \right) \oplus H^0\left(C, \omega_C^{\otimes 2}\right)
\end{equation} compatible with the dual maps of the previous diagrams.  

Since $\mathcal F$ fits in an exact sequence of the form \eqref{exactB0}, for $p$ and $D$ general on $C$ a curve with general moduli, i.e. 
$\mathcal F = \mathcal F_u$ for $u \in \mathcal W_1 \subsetneq \ext^1(K_C-p, K_C-D)$, by semicontinuity on 
$\mathcal W_1$ and the fact that $$H^0(\mathcal F_u) \cong H^0(K_C-D) \oplus H^0(K_C-p) \;\;\;{\rm and} \;\;\; H^1(\mathcal F_u) \cong  H^1(K_C-D) \oplus H^1(K_C-p)$$for any 
$u \in \mathcal W_1$, we will prove the existence of such an injective map \eqref{eq:injectivity1} for the splitting bundle 
$\mathcal F_0 := (K_C-D) \oplus (K_C-p) \in  \mathcal W_1$.

Concerning the domain of the map $\psi^{\vee}_C$, i.e. $H^0(\mathcal F_0) \otimes H^0(\omega_C \otimes \mathcal F_0^*)$, 
as in \cite[Proof of Prop.\;3.9]{CFK} one has 
\[\begin{array}{ccl}
H^0(\mathcal F_0) \otimes H^0(\omega_C \otimes \mathcal F_0^*) & \cong &  \left(H^0(K_C-D) \otimes H^0(D)\right)  \oplus \left(H^0(K_C-D) \otimes H^0(p)\right) \oplus\\
& & \left(H^0(K_C-p) \otimes H^0(D)\right) \oplus \left(H^0(K_C-p) \otimes H^0(p)\right). 
\end{array}
\]On the other hand, since 
$$\det (\mathcal F_0) = 2 K_C - p - D \;\; {\rm and} \;\; 
Sym^2(\mathcal F_0^*)= (p + D - 2 K_C) \oplus (2p - 2 K_C) \oplus (2D - 2K_C),$$one has  
$$\omega_C \otimes Sym^2(\color{black} \mathcal F_0\color{black}^*) \otimes \det (\color{black} \mathcal F_0\color{black}) \cong K_C \oplus (K_C + p - D) \oplus (K_C+D-p).$$Therefore, concerning the target of the map $\psi^{\vee}_C$, one has: 
\[\begin{array}{ccl}
{\small H^0\left(\omega_C \otimes Sym^2(\color{black} \mathcal F_0\color{black}^*) \otimes \det (\color{black} \mathcal F_0\color{black}) \right) \oplus H^0\left(\omega_C^{\otimes 2}\right)} & \cong &  
H^0(K_C) \oplus H^0(K_C + p - D) \oplus\\
& & H^0(K_C+D-p) \oplus H^0(2K_C). 
\end{array}
\] \color{black} By the above decomposition of $H^0(\mathcal F_0) \otimes H^0(\omega_C \otimes \mathcal F_0^*)$ and of 
$H^0\left(\omega_C \otimes Sym^2(\mathcal F_0^*) \otimes \det (\mathcal F_0) \right) \oplus H^0\left(\omega_C^{\otimes 2}\right)$, one considers the following natural maps:
\begin{eqnarray*}
\mu_{0,D}: & H^0(D)\otimes H^0(K_C-D)\to H^0(K_C),\\
 \mu_{p,K_C-D}:& H^0(p) \otimes H^0(K_C-D)\to H^0(K_C-D+p)\\
 \mu_{D,K_C-p}: & H^0(D) \otimes H^0(K_C-p)\to H^0(K_C+D-p)\\
 \mu_{0, p}: & H^0(p)\otimes H^0(K_C-p) \to H^0(K_C),  
 \end{eqnarray*}(which are simply defined by multiplication of global sections of line bundles and are all injective as 
$h^0(D) = h^0(p) =1$) and the following natural injection: 
$$\iota : H^0(K_C) \hookrightarrow H^0(2K_C),$$which is induced by any choice of an effective divisor in $|K_C|$. Looking at the Chern classes of the involved line bundles, \color{black} one naturally defines 
$$\psi_C^{\vee} := \mu_{0, D} \oplus \mu_{p,K_C-D} \oplus \mu_{D,K_C-p} \oplus (\iota \circ \mu_{0, p})$$\color{black} which is therefore \color{black} injective. Moreover, it is compatible with the dual maps 
 $H^1(\rho_*(\gamma_S))^{\vee}$ and $H^1(\rho_*(\tau_S))^{\vee}$ as $\mathcal F_0$ splits.  
 
The previous argument shows $(ii)$, completing the proof.  
\end{proof}

\color{black}
To conclude the proof of Theorem \ref{thm:Hilb} (i), the fact that $\mathcal Y$ fills--up a unique component, say $\mathcal H_{\rm reg}$, 
with all the properties mentioned therein, it suffices to observe that 
$$ 7g-7 + k_2(k_2-2) = \dim\; \mathcal Y \leq \dim \; T_{[S]} (\mathcal H_{d,g,k_2-1}) = h^0(S,\N_{S / \Pp^{k_2-1}})$$ and to use Proposition \ref{prop:Normalreg} (i). 
The fact that $\mathcal H_{\rm reg}$ is a {\em regular} component of $\mathcal H_{d,g,k_2-1}$ follows from the fact that 
$\chi(S, \N_{S / \Pp^{k_2-1}}) = h^0(S,\N_{S / \Pp^{k_2-1}})$ as in \eqref{eq:tgS3bis}, i.e. $\mathcal H_{\rm reg}$ is reduced and of expected dimension.


\end{document}